\documentclass[]{revtex4-1}

\usepackage{graphicx}
\usepackage{epsfig}
\usepackage{epstopdf} 
\usepackage[inkscapelatex=false]{svg}
\usepackage{appendix}
\usepackage{csquotes}
\usepackage{xcolor}
\usepackage{siunitx}
\usepackage{makecell}   
\usepackage{placeins}   
\usepackage{booktabs}
\usepackage{array}
\usepackage{multirow}

\usepackage{mathtools}
\usepackage{amsfonts}
\usepackage{amsthm}
\usepackage{mathrsfs}

\usepackage{comment}
\usepackage{cancel}
\usepackage[normalem]{ulem}

\usepackage{hyperref} 

\DeclareUnicodeCharacter{03B3}{$\gamma$}

\DeclareUnicodeCharacter{03B4}{$\delta$}

\newcommand{\matteo}[1]{\textcolor{blue}{#1}}

 \newtheorem{Proposition}{Proposition}

 \newtheorem{Definition}{Definition}

\DeclareSIUnit\Molar{\textsc{m}}

\makeatletter
\def\@email#1#2{%
 \endgroup
 \patchcmd{\titleblock@produce}
  {\frontmatter@RRAPformat}
  {\frontmatter@RRAPformat{\produce@RRAP{*#1\href{mailto:#2}{#2}}}\frontmatter@RRAPformat}
  {}{}
}%
\makeatother
\begin{document}

\preprint{AIP/123-QED}

\title{Combining standard chemoradiotherapy with CAR-T cell therapy in malignant gliomas: Insights from an impulsive mathematical framework and virtual trials}

\author{Dmitry Sinelshchikov}
\affiliation{Instituto Biofisika (UPV/EHU, CSIC), University of the Basque Country, Leioa, 48940, Spain}
\affiliation{Ikerbasque, Basque Foundation for Science, Bilbao 48009, Spain.}

\author{Nikols Amaru Mora Millán}
\affiliation{Instituto Biofisika (UPV/EHU, CSIC), University of the Basque Country, Leioa, 48940, Spain}
\author{Miguel Perales-Patón}
\affiliation{Mathematical Oncology Laboratory (MOLAB), Instituto de Matemática Aplicada a la Ciencia y la Ingeniería, University of Castilla-La Mancha, Ciudad Real, Spain}
\affiliation{Department of Mathematics, Escuela Técnica Superior de Ingeniería Industrial, University of Castilla-La Mancha, Ciudad Real, Spain}
\affiliation{Laboratorio de Oncología Matemática, Instituto de Investigación Sanitaria de Castilla-La Mancha (IDISCAM), Spain}
\author{Juan Belmonte-Beitia}
\affiliation{Mathematical Oncology Laboratory (MOLAB), Instituto de Matemática Aplicada a la Ciencia y la Ingeniería, University of Castilla-La Mancha, Ciudad Real, Spain}
\affiliation{Department of Mathematics, Escuela Técnica Superior de Ingeniería Industrial, University of Castilla-La Mancha, Ciudad Real, Spain}
\affiliation{Laboratorio de Oncología Matemática, Instituto de Investigación Sanitaria de Castilla-La Mancha (IDISCAM), Spain}

\author{Matteo Italia}
\affiliation{Laboratorio de Oncología Matemática, Instituto de Investigación Sanitaria de Castilla-La Mancha (IDISCAM), Spain}
\affiliation{Mathematical Oncology Laboratory (MOLAB), Instituto de Matemática Aplicada a la Ciencia y la Ingeniería, University of Castilla-La Mancha, Ciudad Real, Spain}
\affiliation{Department of Mathematics, Escuela Técnica Superior de Ingeniería Industrial, University of Castilla-La Mancha, Ciudad Real, Spain}

\date{\today}

\begin{abstract}
The extreme therapeutic resistance of malignant gliomas motivates the development of multimodal treatment strategies. We present a mechanistic mathematical framework to investigate the coupled dynamics of radiotherapy, temozolomide, and chimeric antigen receptor (CAR) T-cell therapy. The model is formulated as a nine-dimensional impulsive dynamical system that captures proliferative-quiescent tumor states, treatment-specific resistance, tissue damage, TMZ pharmacokinetics, and the lymphotoxic interferences exerted by conventional regimens on engineered T cells. Mathematical analysis establishes the existence of biologically meaningful solutions, maps invariant structures, and identifies threshold conditions under which sustained therapy locally stabilizes the 
tumor-free equilibrium.

After study-matched benchmarking against clinical trials, including the standard Stupp protocol, we deploy heterogeneous virtual patient cohorts to evaluate five combined CAR-T--Stupp sequences. At the population level, the immunotherapeutic adjunct yields a consistent but modest survival extension, increasing median overall survival by $0.8$--$0.9$ months with minimal sensitivity to temporal ordering or scheduling perturbations. 

Crucially, these population-level medians mask profound individual heterogeneity: under combined protocols, $19\%$--$22\%$ of virtual patients extend survival by more than $60$ days, and $\sim 3\%$ gain over a year. A low intrinsic tumor proliferation rate $r_1$ emerges as the primary determinant of response, bolstered by robust T-cell expansion and attenuated microenvironmental suppression. These findings suggest that the clinical promise of multimodal integration depends on identifying responsive tumor phenotypes for personalized treatments rather than searching for a single universally optimal timeline.
\end{abstract}

\maketitle

\begin{quotation}
\begin{quotation}
Malignant gliomas remain difficult to treat because tumor cells differ
in their proliferative state and treatment sensitivity, while
radiotherapy, temozolomide, and CAR-T cells can both cooperate and
interfere with one another. Here, we use a mechanistic mathematical
model to study how these three therapies interact and how their order
and timing affect survival in heterogeneous virtual patients. The model
predicts that adding CAR-T therapy to standard chemoradiotherapy provides
a modest improvement in median survival at the population level, with
relatively small differences among the treatment schedules considered.
However, this population average hides substantial variability: some
virtual patients derive markedly larger benefits than others. Tumor
proliferation, CAR-T expansion, tumor-mediated CAR-T inactivation, and
pre-existing CAR-T resistance emerge as key determinants of this
heterogeneity, suggesting that identifying responsive tumor phenotypes
may be more important than searching for a single universally optimal
treatment sequence.
\end{quotation}
\end{quotation}

\section{Introduction}
Malignant gliomas (MGs) are diffusely infiltrating primary tumors of the central nervous system characterized by aggressive growth, marked biological heterogeneity, and frequent recurrence despite multimodal treatment, with long-term survival and durable disease control achieved only in a small minority of patients. Contemporary classifications no longer regard MGs as a single histological entity but instead define gliomas through integrated diagnoses combining histological and molecular features. These molecularly defined entities differ substantially in their natural history, therapeutic sensitivity, and prognosis \cite{Louis2021WHO,Weller2024Glioma}. Throughout this work, we use the term MGs as an operational umbrella for infiltrative gliomas with aggressive clinical behavior, for which radiotherapy (RT), alkylating chemotherapy (CT), and emerging cellular therapies constitute relevant treatment options.

The management of MGs depends on the underlying molecular diagnosis, tumor location, patient age, functional status (e.g., ECOG or Karnofsky performance status), and extent of surgical resection. Nevertheless, maximal safe resection, whenever feasible, followed by RT and systemic CT remains a central therapeutic strategy for many patients \cite{Weller2021EANO}. Temozolomide (TMZ) is one of the most widely used alkylating agents in this setting, either concomitantly with RT, as maintenance treatment, or at recurrence. Surgery followed by RT with concomitant and adjuvant TMZ, commonly known as the Stupp protocol, represents the current standard of care for the most aggressive forms of MG \cite{Stupp2005,Hegi2005}. Despite such intensive multimodal treatment, durable tumor control remains uncommon, emphasizing the need for therapies capable of overcoming residual disease, phenotypic adaptation, and treatment resistance.

Therapeutic failure in MGs cannot be attributed solely to insufficient treatment intensity. The standard Stupp protocol \cite{Stupp2005} is a one-size-fits-all strategy and does not explicitly account for the substantial inter-patient variability in tumor growth, proliferative state, treatment resistance, or immune response. This heterogeneity provides a strong rationale for investigating optimized and potentially personalized treatment schedules. 
These tumors comprise dynamically interacting cellular populations with different proliferative capacities, phenotypic states, and treatment susceptibilities \cite{Patel2014,Neftel2019,Quail2017}. Consequently, therapy can alter not only the total tumor burden but also the relative abundance of phenotypic subpopulations, allowing initially minor therapy-resistant subpopulations to drive subsequent progression. Chimeric antigen receptor (CAR) T-cell therapy has emerged as a promising strategy to complement conventional treatments by providing an antigen-specific mechanism of tumor-cell elimination. CAR-T cells are genetically engineered lymphocytes that recognize selected surface antigens and, following antigen engagement, undergo activation and proliferative expansion. After their clinical success in hematological malignancies \cite{Maude2018}, several CAR constructs have been investigated against MG-associated targets, including IL13R$\alpha$2, EGFR and EGFRvIII, HER2, GD2, and B7-H3 \cite{Li2025}. For a comprehensive overview of recent advancements and clinical applications of CAR-T cell therapies specifically tailored for MGs, we refer the reader to recent reviews \cite{cart_review_MG,review_cart_diffuse_midline_glioma,montoya2024roadmapCART,neurooncoReview2024}.

Early-phase clinical studies have demonstrated the feasibility of CAR-T cell administration in patients with different forms of MG and have reported tumor trafficking, disease stabilization, and occasional objective responses \cite{Brown2024,Bagley2025,Choi2024,Monje2025}. However, durable control remains uncommon. The efficacy of CAR-T therapy in solid tumors is limited by antigen heterogeneity and loss, restricted tumor infiltration, insufficient expansion and persistence, T-cell dysfunction, and local immunosuppression \cite{Li2025,ORourke2017}. These limitations suggest that CAR-T cells may be more effective when integrated with conventional treatments rather than being administered as an isolated intervention.

Combining CAR-T cells with RT and TMZ may exploit complementary vulnerabilities within heterogeneous tumors. TMZ can affect tumor populations that evade antigen-specific recognition, whereas CAR-T cells can target antigen-positive populations that are intrinsically resistant or have acquired resistance to TMZ. TMZ-induced lymphodepletion may also promote the expansion of subsequently administered CAR-T cells, and conventional treatments can modify tumor antigen or stress-ligand expression \cite{Suryadevara2018}. Similarly, RT can reduce tumor burden, alter the tumor microenvironment, and promote immune-cell recruitment and infiltration. Synergistic activity between RT and CAR-T cells has been observed in preclinical glioma models \cite{Weiss2018}. Conversely, TMZ is cytotoxic to proliferating lymphocytes, and CAR-T cells are highly radiosensitive \cite{Suryadevara2018,paganetti2023review}. The therapeutic outcome may therefore depend strongly on the order, spacing, and temporal overlap of the three treatments.

The proliferative state of the tumor constitutes an additional source of treatment heterogeneity. TMZ and RT preferentially damage actively proliferating cells, whereas quiescent or slowly cycling populations can be relatively protected and subsequently contribute to tumor repopulation \cite{Roos2007,Pawlik2004,Tejero2019,Liau2017,Rabe2020,Xie2022}. The fraction of proliferative cells can be partially characterized by the $\mbox{Ki-67}$ index, providing a clinically interpretable connection between tumor biology and mathematical compartmentalization. Moreover, lethally damaged tumor cells may remain temporarily within the measurable tumor burden before being progressively cleared. Distinguishing proliferative, quiescent, resistant, and damaged populations is therefore relevant for representing both treatment response and relapse dynamics.

Mechanistic mathematical models provide a systematic framework for studying these nonlinear interactions and comparing treatment schedules that would be difficult to assess exhaustively in experimental or clinical studies \cite{Altrock2015,Harkos2025}. By explicitly representing tumor growth, phenotypic transitions, treatment pharmacokinetics, resistance, and therapeutic-cell dynamics, these models can help determine when different treatments cooperate or interfere with one another. Sampling model parameters from biologically plausible ranges also enables the generation of heterogeneous Virtual Patient (VP) cohorts, the performance of \textit{in silico} trials, and the identification of model-derived prognostic or predictive biomarkers \cite{Wang2024}.

The first framework studied the combination of TMZ and CAR-T cell therapy in MGs by distinguishing tumor populations according to their sensitivity or resistance to each treatment \cite{Sinelshchikov2025}. It incorporated TMZ-induced resistance, antigen-dependent CAR-T killing and expansion, tumor-mediated CAR-T inactivation, TMZ pharmacokinetics, and the detrimental effect of TMZ on CAR-T cells. Its results suggested that treatment sequencing could be exploited to control populations with complementary resistance profiles. A subsequent framework described MGs dynamics under combined RT and TMZ and incorporated proliferative and quiescent tumor cells, treatment-induced damage, phenotypic transitions, and TMZ resistance \cite{PeralesPaton2026}. The model was calibrated and validated using \textit{in vivo} data and subsequently employed to virtually investigate alternative chemoradiotherapy schedules.

Neither framework, however, allows the simultaneous investigation of RT, TMZ, and CAR-T therapy while accounting for treatment-specific resistance, proliferative heterogeneity, and the potentially detrimental effects of conventional treatments on CAR-T cells.
Building on these foundations, we propose a unified nine-dimensional ordinary differential equation (ODE) model for the treatment of MGs with RT, TMZ, and CAR-T cells. The tumor is classified simultaneously according to proliferative status and treatment sensitivity, distinguishing proliferative and quiescent populations that are resistant to CAR-T cells, resistant to TMZ, or sensitive to both treatments. Reversible transitions between proliferative and quiescent states are related to the $\mbox{Ki-67}$ index, while a damaged-cell compartment represents lethally injured tumor cells before their clearance. The model also describes TMZ pharmacokinetics, TMZ-induced resistance, antigen-dependent CAR-T cytotoxicity and expansion, tumor-mediated CAR-T inactivation, and the lymphotoxic effects of TMZ and RT. Treatment administrations are incorporated as impulsive inputs, while RT effects on proliferative tumor cells are represented through a common surviving fraction based on the linear-quadratic (LQ) formalism, while CAR-T-cell radiosensitivity is treated separately.

We first investigate the mathematical properties of the model, including the consistency with the original model, the existence and non-negativity of biologically meaningful solutions, tumor-free equilibria, invariant structures, and the dynamics of the proliferative fraction. A continuous-treatment formulation is then used to derive parameter regimes under which sustained treatment can stabilize a tumor-free state. The numerical model is benchmarked against published data for untreated disease and for TMZ, CAR-T, RT, and combined RT--TMZ treatments. Finally, heterogeneous VP cohorts are used to compare protocols in which CAR-T administrations are delivered before, after, or between different phases of an RT-TMZ reference regimen. Overall survival (OS), population dynamics, and associations between model parameters and therapeutic benefit are evaluated to identify promising treatment schedules and candidate biomarkers. This framework provides a mechanistic tool for investigating synergy and antagonism among RT, TMZ, and CAR-T therapy and for supporting the optimization and potential personalization of multimodal treatment strategies for MGs.

\section{Methods}

\subsection{Background}
We develop a compartmental ODE model describing the response of MGs to RT, TMZ, and CAR-T cell therapy. The model represents the tumor as a collection of phenotypically homogeneous subpopulations differing in their proliferative state and treatment sensitivity. Spatial dependencies and stochastic effects are neglected for simplicity. 

To capture tumor lifecycle dynamics and therapeutic evasion, the cancer mass is structured into three distinct viable phenotypes, each represented by a coupled proliferative-quiescent compartment pair describing the continuous biological flux between active division and dormancy \cite{dormantstateandback}. Within this framework, therapeutic sensitivity is strictly restricted to the actively dividing state, as traditional cytotoxic therapies predominantly eliminate cells with proliferative capacity \cite{santos2021dormancy}. Similarly, although CAR T-cell cytotoxicity is not intrinsically cell-cycle dependent, its effective action is restricted here to proliferative cells as a simplifying assumption. This choice is motivated by evidence that quiescent tumor cells can constitute immunotherapy-resistant reservoirs, they exhibit a severe downregulation of global protein synthesis and often reside deep within the solid tumor mass, factors that significantly compromise CAR T-cell efficacy \cite{Ma,baldominos2022quiescent,Majzner2020,kringel2023chimeric}. Furthermore, these quiescent subpopulations establish immunosuppressive, hypoxic niches characterized by HIF-1$\alpha$ stabilization and the accumulation of metabolic waste products like lactic acid, which actively drive the functional exhaustion and inactivation of infiltrating T-cells \cite{Tejero2019,baldominos2022quiescent}.
Consequently, quiescent subpopulations are modeled as fully resistant to treatment. In our non-spatial framework, implementing this intrinsic resistance serves as an elegant surrogate to simulate spatial heterogeneity and physical barriers to CAR T-cell infiltration. In a physical solid tumor, proliferating cells reside on the vascularized outer periphery where nutrient gradients are optimal, whereas quiescent cells cluster within the deep, hypoxic core. Because CAR T-cells face severe trafficking constraints and rarely penetrate this dense inner mass, their activity is largely restricted to the tumor surface. By defining quiescent cells as refractory to killing, the model implicitly captures this penetration gradient, mapping the geometric confinement of CAR T-cells to the periphery and the immune sheltering of the core. Each quiescent compartment thus acts as a protected reservoir, and cells must transition back into the corresponding proliferative compartment to become vulnerable to therapy.

Specifically, within the first phenotype, the coupled pair $\{S, Q\}$ denotes cells that are sensitive to all treatments only when proliferating ($S$) \cite{chemodamage,radiodamage,Feins}, whereas the quiescent pool ($Q$) remains invulnerable until it re-enters the active cycle. Specularly, the coupled pair $\{R_C, Q_C\}$ represents a phenotype that has developed resistance to CAR-T cell killing (antigen escape or loss \cite{Bodnar2023amcs}) but remains sensitive to TMZ through its proliferative fraction ($R_C$). Conversely, the pair $\{R_E, Q_E\}$ accounts for clonal lineages that have escaped TMZ toxicity but remain susceptible to CAR-T immunotherapy via their actively dividing compartment ($R_E$). Finally, the mathematical architecture is completed by three auxiliary variables representing treatment pharmacokinetics and pathology clearance: lethally damaged tumor cells awaiting physical resorption ($D$), active CAR-T cell effectors ($C$), and the effective concentration of TMZ ($E$).

Only the proliferative compartments (\(S\), \(R_C\), and \(R_E\)) contribute directly to tumor growth, which is limited by a common carrying capacity \(K\) \cite{Gerlee2013,Budia2019}. The TMZ-sensitive populations \(S\) and \(R_C\) share the same proliferation rate, whereas TMZ-resistant cells may proliferate at a different rate, reflecting the variable fitness associated with acquired drug resistance \cite{Rabe2020,Sinelshchikov2025}. Cells reversibly transition between proliferative and quiescent states within each treatment-sensitivity phenotype \cite{dormantstateandback}. Quiescent cells (\(Q\), \(Q_C\), and \(Q_E\)) do not divide and display reduced effective sensitivity compared with proliferative cells \cite{Pawlik2004,Tejero2019,Rabe2020,baldominos2022quiescent}; here, quiescent cells are fully resistant.
The initial balance between proliferative and quiescent cells is related to the $\mbox{Ki-67}$ index, which is used as a proxy for the tumor proliferative fraction \cite{dahlrot2021prognostic}.

CAR-T-sensitive cells are assumed to express the antigen or combination of antigens recognized by the therapeutic cells \cite{Bodnar2023amcs}. Thus, CAR-T cells eliminate the proliferative populations \(S\) and \(R_E\), whereas \(R_C\) cells escape CAR-T-mediated killing due to absent or insufficient target-antigen expression.

TMZ resistance is induced in TMZ sensitive cells ($S$ and $R_C$) upon exposure to TMZ \cite{Rabe2020}. Thus, the model incorporates sensitivity and resistance to TMZ, the standard chemotherapeutic agent for gliomas \cite{friedman2000temozolomide,Stupp2005}. Over time, a subset of tumor cells may remain sensitive and responsive to treatment, while others acquire resistance, reflecting the complex and dynamic nature of tumor heterogeneity under dual therapy \cite{Rabe2020}. TMZ, in turn, damages the proliferative TMZ-sensitive populations \(S\) and \(R_C\), while \(R_E\) cells are unaffected by the drug.

Following our previous TMZ--CAR-T framework \cite{Sinelshchikov2025}, cells entering the TMZ-resistant compartment are assumed to remain susceptible to CAR-T-mediated killing, and TMZ-induced resistance is represented through a direct TMZ-dependent transition from the proliferative TMZ-sensitive populations $S$ and $R_C$ to the TMZ-resistant compartment $R_E$.

We also consider the dynamics of the applied treatments. TMZ decays exponentially between consecutive administrations following first-order pharmacokinetics \cite{agarwala2000temozolomide}, it acts only on TMZ-sensitive tumor cells and can induce resistance in these cells \cite{Rabe2020}. TMZ may also damage CAR-T cells because of their proliferative activity, although TMZ-induced lymphodepletion prior to CAR-T administration may enhance subsequent CAR-T expansion and antitumor activity \cite{Suryadevara2018}.
Instead, 
CAR-T cells expand after encountering the antigen-positive populations \(S\) and \(R_E\), undergo natural death or functional inactivation, and are suppressed through interactions with the tumor and its microenvironment, or by RT and TMZ \cite{Brown2,Fonkoua,Sinelshchikov2025}. Tumor-mediated CAR-T inactivation depends on the total tumor burden reflecting the immunosuppressive, hypoxic tumor niches \cite{Tejero2019,baldominos2022quiescent}. 

The compartment $D$ contains tumor cells that have received lethal damage from TMZ, CAR-T cells, or RT, remaining temporarily within the measurable tumor burden before physical clearance. This representation accounts for the characteristic kinetic delay between treatment-induced injury and the actual disappearance of damaged tissue \cite{hirose2005akt, vakifahmetoglu2008death, programmedcelldeathglioma,joiner2009basic}.

RT is administered over a much shorter timescale than tumor growth and is therefore represented through instantaneous state changes \cite{PeralesPaton2026}. It damages the proliferative populations \(S\), \(R_C\), and \(R_E\), with phenotype-specific surviving fractions defined through the LQ formalism \cite{fowler1989linear}. Irradiated cells that do not survive are transferred to \(D\). The direct lethal effect of RT on quiescent cells is neglected, but each RT fraction recruits a fraction of \(Q\), \(Q_C\), and \(Q_E\) into their corresponding proliferative compartments, representing accelerated repopulation after irradiation \cite{Fogarty2017,Zimmerman2013,He2018}. Because lymphocytes are highly radiosensitive, RT also directly reduces the CAR-T cell population \cite{paganetti2023review}.

Thus, the three therapies act on partially overlapping tumor populations. TMZ and CAR-T cells control complementary resistance proliferative phenotypes, while RT targets all proliferative tumor compartments, and it induces accelerated repopulation---modifying the tumor populations exposed to subsequent treatment. Moreover, TMZ and RT reduce CAR-T abundance. These competing interactions provide the biological basis for investigating the effects of treatment order and timing. 

\subsection{The model}
\begin{figure}[ht!]
    \centering
    \includegraphics[width=0.475\textwidth]{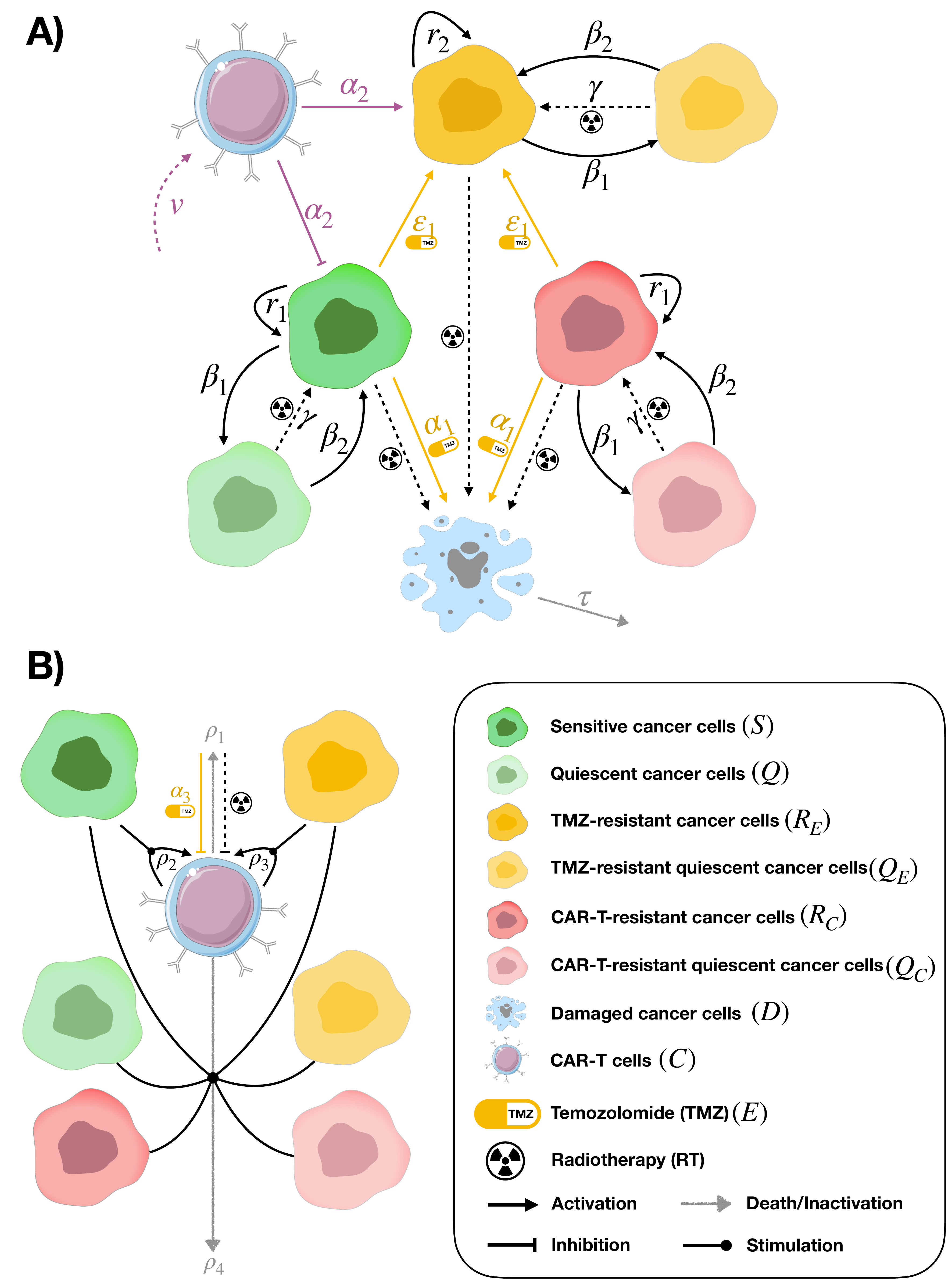}
    \caption{Graphical representation of the mathematical model. \textbf{A)} Tumor-cell dynamics, including proliferative populations ($S$, $R_C$, and $R_E$), their corresponding quiescent compartments ($Q$, $Q_C$, and $Q_E$), reversible proliferative--quiescent transitions, tumor growth, treatment-induced damage, temozolomide (TMZ) resistance, and clearance of damaged cells ($D$). TMZ acts on TMZ-sensitive proliferative cells, CAR-T cells target antigen-positive proliferative cells, and radiotherapy (RT) affects proliferative tumor cells while recruiting quiescent cells back into proliferation.  \textbf{B)} CAR-T-cell dynamics, including stimulation by antigen-positive tumor cells, natural loss or inactivation, tumor-mediated suppression, and the effects of TMZ and RT. Solid arrows denote continuous interactions represented in the ODE system~\eqref{eq:new_model4}, whereas dashed arrows indicate impulsive treatment effects applied at discrete administration times.}
    \label{fig:tumor_dynamics}
\end{figure}
\subsubsection{Model equations}
Figure~\ref{fig:tumor_dynamics} schematically summarizes the model structure. \ref{fig:tumor_dynamics}.A) depicts the tumor subpopulations, their transitions between proliferative and quiescent states, and the effects of RT, TMZ, and CAR-T therapy, whereas \ref{fig:tumor_dynamics}.B) focuses on CAR-T-cell dynamics and their interactions with the tumor and the considered treatments. Building on the previously developed RT--TMZ and TMZ--CAR-T frameworks \cite{PeralesPaton2026,Sinelshchikov2025}, we integrate phenotypically structured tumor dynamics, reversible proliferative--quiescent transitions, treatment-induced damage, and CAR-T-cell dynamics into a unified formulation. The resulting model is a nine-dimensional hybrid dynamical system.
Eq.~\eqref{eq:new_model4} describes the time-continuous tumor dynamics at time points where treatments are not administered (see Section \ref{sec:modeling_treatments} for treatment administration time points). 
RT effects are not included explicitly in the ODE model; instead, it is included separately as instantaneous state updates at the prescribed irradiation times (see system of Eqs.~\eqref{eq:systemRT}). Similarly, CT and CAR-T administrations are incorporated separately as instantaneous updates to drug efficacy ($E$) and CAR T cells (see Eq.~\eqref{eq:tmz_application} and Eq.~\eqref{eq:cart_application}). However, CAR T cells and drug efficacy ($E$) are incorporated into the ODE model described by Eq.~\eqref{eq:new_model4} as time-dependent treatment effects acting on these dynamics, i.e., Eq.~\eqref{eq:C} and Eq.~\eqref{eq:E}. The governing ODE system is given by:

\begin{subequations}
\label{eq:new_model4}
\renewcommand{\theequation}{\theparentequation.\arabic{equation}}
\begin{align}
\dot{S} &=
r_1 S \left(1-\frac{T}{K}\right)
-\beta_1 S+\beta_2 Q
-(\alpha_1+\epsilon_1)ES-\alpha_2 CS,
\label{eq:S}
\\
\dot{Q} &=
\beta_1 S-\beta_2 Q,
\label{eq:Q}
\\
\dot{R}_C &=
r_1 R_C \left(1-\frac{T}{K}\right)
-\beta_1 R_C+\beta_2 Q_C
-(\alpha_1+\epsilon_1)ER_C,
\label{eq:RC}
\\
\dot{Q}_C &=
\beta_1 R_C-\beta_2 Q_C,
\label{eq:QC}
\\
\dot{R}_E &=
r_2 R_E \left(1-\frac{T}{K}\right)-\beta_1 R_E+\beta_2 Q_E-\alpha_2 CR_E+\epsilon_1(S+R_C)E,
\label{eq:RE}
\\
\dot{Q}_E &=
\beta_1 R_E-\beta_2 Q_E,
\label{eq:QE}
\\
\dot{D} &=\alpha_1(S+R_C)E+\alpha_2(S+R_E)C-\tau D,
\label{eq:D}
\\
\dot{C} &=
-\rho_1 C
+\frac{\rho_2 SC}{g_1+S}
+\frac{\rho_3 R_E C}{g_2+R_E}
-\rho_4\frac{TC}{g_3+C}
-\alpha_3 EC,
\label{eq:C}
\\
\dot{E} &=-\mu E,
\label{eq:E}
\end{align}
\end{subequations}
where $T=S+Q+R_C+Q_C+R_E+Q_E+D$ represents the total amount of tumor cells.

Equation~\eqref{eq:S} describes the dynamics of tumor cells \(S\), which are proliferative and sensitive to both TMZ and CAR-T therapy. In the absence of treatment, these cells proliferate at a rate \(r_1\) according to a logistic growth law with a carrying capacity \(K\). The logistic factor accounts for competition for space and resources generated by all tumor compartments, including damaged cells that have not yet been cleared (i.e, \(S,\,Q,\,R_{C},\,Q_{C},\,R_{E},\,Q_{E},\,\text{and }D\)). The terms \(-\beta_1S\) and \(+\beta_2Q\) describe the reversible transition from the proliferative to the quiescent state and the corresponding return from \(Q\) to \(S\), respectively. TMZ exposure produces two distinct effects: \(-\alpha_1ES\) represents irreversible treatment-induced damage, with the corresponding cells transferred to \(D\), whereas \(-\epsilon_1ES\) accounts for TMZ-induced acquisition of resistance, with these cells entering the \(R_E\) compartment. Finally, \(-\alpha_2CS\) represents CAR-T-mediated killing of antigen-positive sensitive cells, which also contributes to the damaged population.

Equation~\eqref{eq:Q} governs the quiescent population \(Q\) associated to S. Cells enter this compartment from \(S\) at a rate \(\beta_1\) and return to the proliferative state at a rate \(\beta_2\). Quiescent cells do not proliferate and are not directly affected by TMZ or CAR-T cells in the continuous dynamics.

Equation~\eqref{eq:RC} describes proliferative tumor cells resistant to CAR-T therapy but sensitive to TMZ. These cells proliferate logistically with the same intrinsic rate \(r_1\) as \(S\) and undergo reversible transitions with the corresponding quiescent compartment \(Q_C\), represented by the terms \(-\beta_1R_C\) and \(+\beta_2Q_C\). Since \(R_C\) cells are resistant to CAR-T cells, no CAR-T-mediated killing term is included. TMZ, however, remains effective against this population: \(-\alpha_1ER_C\) accounts for lethal TMZ-induced damage, whereas \(-\epsilon_1ER_C\) represents the acquisition of TMZ resistance. The latter cells are transferred to \(R_E\). Similarly to Eq.~\eqref{eq:Q}, 
Equation~\eqref{eq:QC} describes the corresponding transitions between proliferative \(R_C\) and quiescent \(Q_C\) cells resistant to CAR-T therapy. 

Equation~\eqref{eq:RE} governs proliferative TMZ-resistant cells. Their untreated growth follows the same logistic limitation as the other proliferative populations, but with a proliferation rate \(r_2\), which may differ from \(r_1\). The terms \(-\beta_1R_E\) and \(+\beta_2Q_E\) account for reversible transitions between proliferative and quiescent TMZ-resistant states. Since \(R_E\) cells are resistant to TMZ, no TMZ-induced killing term is present. However, they retain the antigen recognized by CAR-T cells and are therefore eliminated through the term \(-\alpha_2CR_E\). Finally, \(+\epsilon_1(S+R_C)E\) represents the influx of newly TMZ-resistant cells generated from the two TMZ-sensitive proliferative populations. Similarly to Eqs.~\eqref{eq:Q} and~\eqref{eq:QC}, Equation~\eqref{eq:QE} describes the corresponding transitions between proliferative \(R_E\) and quiescent \(Q_E\) cells resistant to TMZ.

Equation~\eqref{eq:D} describes the damaged tumor-cell population. The term \(\alpha_1(S+R_C)E\) refers to TMZ-sensitive cells that receive lethal damage from TMZ, while \(\alpha_2(S+R_E)C\) accounts for antigen-positive tumor cells that are lethally affected by CAR-T therapy. Damaged cells are progressively removed from the tumor mass at a rate \(\tau\), represented by \(-\tau D\). RT-induced damaged cells do not appear explicitly in Eq.~\eqref{eq:D}, since RT is modeled as an instantaneous intervention; the corresponding cells are transferred to \(D\) at irradiation times through the discrete treatment updates described below.

Equation~\eqref{eq:C} describes CAR-T cell dynamics. The term \(-\rho_1C\) represents their natural death or functional inactivation. CAR-T cells proliferate following antigen recognition, represented by the saturating terms
\(\rho_2SC/(g_1+S)\) and \(\rho_3R_EC/(g_2+R_E)\), corresponding to stimulation by the two antigen-positive proliferative populations \(S\) and \(R_E\), respectively. The parameters \(\rho_2\) and \(\rho_3\) determine the maximal stimulation rates, whereas \(g_1\) and \(g_2\) are the corresponding half-saturation constants. Tumor-mediated CAR-T inactivation is represented by
$- (\rho_4 TC)/(g_3+C)$, which increases with the total tumor burden and saturates with CAR-T abundance. Here, \(\rho_4\) quantifies the strength of tumor-induced CAR-T inactivation, and \(g_3\) controls its saturation. Finally, \(-\alpha_3EC\) accounts for the cytotoxic effect of TMZ on CAR-T cells.

Equation~\eqref{eq:E} describes TMZ pharmacokinetics between consecutive administrations. The effective drug concentration \(E\) decays exponentially according to first-order kinetics, with a clearance rate \(\mu\). TMZ administrations are subsequently represented as instantaneous increases in \(E\).

\subsubsection{Initial and final conditions}

To initialize the numerical simulations, we assume a baseline total tumor population denoted by $T_0$, which can be estimated via volumetric imaging in practical applications. To model intratumoral heterogeneity, we define $\delta_1$ and $\delta_2$ as the initial fractions of tumor cells resistant to TMZ and CAR-T cell therapy, respectively \cite{Sinelshchikov2025}. 

We assume that the fraction of actively dividing cells remains consistent across all phenotypic subpopulations and is governed by the clinical $\mbox{Ki-67}$ proliferation index \cite{PeralesPaton2026}. Consequently, the initial cellular distribution between the coupled proliferative and quiescent compartments is defined as follows:
\begin{subequations} \label{eq:initial_conditions}
\begin{align}
    S(0)   &= \mbox{Ki-67}(1-\delta_{1}-\delta_{2})T_{0}, \\
    Q(0)   &= (1-\mbox{Ki-67})(1-\delta_{1}-\delta_{2})T_{0}, \\
    R_{E}(0) &= \mbox{Ki-67}\delta_{1}T_{0}, \\
    Q_{E}(0) &= (1-\mbox{Ki-67})\delta_{1}T_{0}, \\
    R_{C}(0) &= \mbox{Ki-67}\delta_{2}T_{0}, \\
    Q_{C}(0) &= (1-\mbox{Ki-67})\delta_{2}T_{0}.
\end{align}
\end{subequations}
Furthermore, assuming a previously untreated baseline, the compartments tracking treatment dynamics and damaged cell populations are initially set to zero:
\begin{equation}
    D(0) = 0, \quad C(0) = 0, \quad E(0) = 0.
\end{equation}
Note that these initial conditions for CAR-T cells ($C$) and TMZ efficacy ($E$) can be adjusted to positive values when simulating clinical scenarios where prior therapeutic cycles have already occurred.

Computations are terminated either upon reaching context-specific end-times (such as full remission) or when the tumor burden leads to patient death. Following established clinical metrics, lethal progression is defined by the tumor volume reaching a fatal threshold of $K/5$, equivalent to approximately $10^{12}$ cells \cite{Sinelshchikov2025}. While an absolute lethal cell count varies clinically for gliomas, a burden of $10^{12}$ cells corresponds approximately to a $100$-gram mass or a volume of $300\,\text{cm}^3$, which has been widely reported as a terminal threshold \cite{Delobel_PLOS}.

\subsubsection{Modeling treatment interventions}
\label{sec:modeling_treatments}

In order to model treatment applications, we adopt the impulsive initial conditions approach as in our previous works \cite{Sinelshchikov2025,PeralesPaton2026}. This formulation is biologically justified by their rapid pharmacokinetics and short administration timescales relative to tumor growth dynamics \cite{neuropharmacokinetics, ballesta2014multiscale}. Mathematically, these treatments introduce discrete discontinuities into the continuous ordinary differential equation framework, where $F(t^\pm) := \lim_{t \to t^\pm} F(t)$ denotes the standard one-sided limits immediately before and after therapy.

RT fractions are delivered at times $t_i$ ($i=1,\dots,N_{RT}$). Consistent with the biological rationale, radiation induces lethal DNA damage exclusively in proliferating compartments ($S, R_E, R_C$), transferring a fraction of affected cells into the damaged compartment $D$, while simultaneously triggering a fraction $\gamma$ of quiescent cells ($Q, Q_E, Q_C$) to re-enter active proliferation \cite{kuznetsov2023optimization}. While CAR-T cells ($C$) within the microenvironment are subject to radiation-induced depletion, the biochemical efficacy of TMZ ($E$) is assumed to remain unaffected. The impulsive updates at each RT fraction are modeled as follows:
\begin{subequations} \label{eq:systemRT}
    \begin{align}
        S(t_i^{+}) &= SF\cdot S(t_i^-) + \gamma Q(t_i^-),\label{saltoS}\\
        Q(t_i^+) &= (1-\gamma)Q(t_i^-),\label{saltoSQ}\\
        R_E(t_i^+) &= SF \cdot R_E(t_i^-) + \gamma Q_{E}(t_i^-),\label{saltoRE}\\
        Q_{E}(t_i^+) &= (1-\gamma)Q_{E}(t_i^-),\label{saltoREQ}\\
        R_C(t_i^+) &= SF \cdot R_C(t_i^-) + \gamma Q_{C}(t_i^-),\label{saltoRC}\\
        Q_{C}(t_i^+) &= (1-\gamma)Q_{C}(t_i^-),\label{saltoRCQ}\\
        D(t_i^+) &= D(t_i^-)+(1-SF)S(t_i^-)\nonumber\\&+(1-SF)R_C(t_i^-)+(1-SF)R_E(t_i^-),\label{saltoD}\\
        C(t_i^+) &= SF_C \cdot C(t_i^-).\label{saltoCART}
    \end{align}
\end{subequations}

The surviving fractions $SF$ and $SF_C$ of the tumor subpopulations and of CAR-T cells, respectively, are determined by the classical LQ model:
\begin{equation} \label{eq:LQ_model}
    SF = e^{-(\alpha d_{\rm RT} + \beta d_{\rm RT}^2)}, \quad SF_C=e^{-\alpha_T\,\cdot\,d_{\rm RT}}
\end{equation}
where $d_{\rm RT}$ represents the radiation dose per fraction (Gy), while $\alpha$ ($\text{Gy}^{-1}$) and $\beta$ ($\text{Gy}^{-2}$) quantify the linear non-repairable lethal lesions and the quadratic misrepair of sublethal damage, respectively \cite{fowler1989linear}, and $\alpha_T$ ($\text{Gy}^{-1}$) quantifies the lymphocyte radiosensitivity using a linear dose-response curve \cite{paganetti2023review}. 

TMZ administrations are represented as impulsive boluses delivered at specific times $t_j$ ($j=1,\dots,N_{CT}$). When TMZ is applied, we assume continuity for all variables except the normalized drug efficacy $E$, which satisfies:
\begin{equation}\label{eq:tmz_application}
    E(t_{j}^{+})=E(t_{j}^{-})+E_{0}, \quad \text{with } X(t_j^+) = X(t_j^-) \quad \forall X \neq E,
\end{equation}
where $E_0$ denotes the normalized amount of administered TMZ.

The injection of CAR-T cells is similarly modeled as a discrete, instantaneous influx at times $t_k$ ($k=1,\dots,N_{CAR}$). The application of CAR-T at day $t_{k}$ corresponds to:
\begin{equation}\label{eq:cart_application}
    C(t_{k}^{+})=C(t_{k}^{-})+v, \quad \text{with } X(t_k^+) = X(t_k^-) \quad \forall X \neq C,
\end{equation}
where $v$ represents the number of CAR-T cells given to a patient, while the remaining state variables are assumed to be continuous.

\bigskip
A combination of conditions \eqref{eq:systemRT}, \eqref{eq:cart_application}, and \eqref{eq:tmz_application} is used when a combination of these treatments is modelled, applying the respective updates simultaneously at overlapping treatment time points.

\subsubsection{Model parameters} \label{sec:model_param}

Model parameter values were obtained from published experimental and clinical studies and from previously developed mathematical frameworks for MG treatment. Table~\ref{tab:model_parameters} summarizes the parameter values and sampling ranges considered throughout the study, together with their biological interpretation and corresponding sources. 

The CAR-T-related parameters are primarily inherited from the framework on which the CAR-T component of the present model is based \cite{Sinelshchikov2025}. The interval for $\rho_1$ corresponds to a characteristic activated CAR-T-cell lifetime between approximately 7 and 30 days \cite{Ghorashian}. The stimulation rates $\rho_2$ and $\rho_3$, the saturation constants $g_1$, $g_2$, and $g_3$, and the tumor-killing coefficient $\alpha_2$ follow previous mathematical descriptions of CAR-T-cell dynamics in solid tumors \cite{Ode2}. Although $\rho_2$ and $\rho_3$ are retained as distinct parameters in the model formulation, in the reference virtual cohort we set $\rho_3=\rho_2$, following our previous TMZ-CAR-T framework \cite{Sinelshchikov2025}. The CAR-T dose $v$ is varied over the range $10^7$--$10^9$ cells to encompass clinically relevant orders of magnitude considered in previous modeling and clinical studies \cite{goff2019pilot,Sinelshchikov2025}.

The proliferation rates $r_1$ and $r_2$ describe the intrinsic growth of proliferative tumor cells. Although $r_1$ and $r_2$ are retained as distinct parameters in the mathematical formulation, following our previous
TMZ-CAR-T framework \cite{Sinelshchikov2025}, we impose $r_2=r_1/2$ for each VP in the reference virtual cohort, rather than sampling $r_2$ independently. The relationship between these intrinsic rates and the effective growth rates
of the previous model is addressed in Appendix \ref{sec:app_2}, indicated as DF (derived from)
in Table \ref{tab:model_parameters}.
TMZ-related parameters are largely based on the previously developed resistance model \cite{Delobel_PLOS}. The parameters $\alpha_1$ and $\epsilon_1$ quantify, respectively, TMZ-induced lethal damage and the acquisition of TMZ resistance, while $\mu$ determines the first-order decay of the effective drug concentration. Direct quantitative measurements of TMZ cytotoxicity against therapeutic CAR-T cells in MGs are not available. Following the assumption introduced in the previous TMZ--CAR-T model \cite{Sinelshchikov2025}, we therefore set
$\alpha_3=\alpha_1$ rather than sampling the two parameters independently. 

The initial resistant fractions also represent sources of biological
uncertainty. The parameter $\delta_1$ denotes the initial fraction of tumor
cells resistant to TMZ. Since the prevalence of pre-existing TMZ-resistant
cells is not sufficiently characterized to define a patient-level
distribution, we follow the previous TMZ--CAR-T framework and assume
$\delta_1\in[10^{-4},0.1]$, allowing for a small pre-existing resistant
population while maintaining a predominance of TMZ-sensitive cells at
treatment onset \cite{Sinelshchikov2025}. The parameter $\delta_2$ denotes
the initial fraction of tumor cells resistant to CAR-T-mediated killing.
Its range, $\delta_2\in[0.1,0.3]$, is motivated by the substantial
heterogeneity in target-antigen expression observed in MGs
\cite{ORourke2017}.

The proliferative fraction is characterized through the $\mbox{Ki-67}$ index, for which values between $0.1$ and $0.5$ are considered \cite{dahlrot2021prognostic}. Following the proliferative--quiescent framework \cite{ayala2021optimal,PeralesPaton2026}, $\beta_1$ and $\beta_2$ are not treated as independent parameters. For a certain $\mbox{Ki-67}$ value, the quiescent-to-proliferative transition rate is related to $\beta_1$ and $r_1$ through
\begin{equation}
\label{eq:flowQP}
\beta_2=
\mathrm{\mbox{Ki-67}}
\left(
\frac{\beta_1}{1-\mbox{Ki-67}}-r_1
\right).
\end{equation}
Accordingly, admissible parameter combinations must satisfy
\begin{equation}
\beta_1\geq r_1\left(1-\mathrm{Ki\text{-}67}\right), 
\end{equation}
which guarantees $\beta_2\geq0$. This relationship defines the basal balance between proliferative and quiescent cells used to initialize the model.

\begin{table*}
\caption{Model parameters, sampling ranges, units, and corresponding sources. DF means derived from.}
\label{tab:model_parameters}
\centering
\small
\begin{tabular}{llccl}
\textbf{Par.} & \textbf{Description} & \textbf{Range} & \textbf{Unit} & \textbf{Ref.} \\
\hline

\multicolumn{5}{@{}l}{\textit{Tumor growth, phenotypic switching, and initial composition}}\\
\addlinespace[2pt]
$r_1$
& Intrinsic proliferation rate of $S$ and $R_C$
& $[0.001,\,0.25]$
& d$^{-1}$
& DF \cite{Ode2,Sinelshchikov2025}
\\
$r_2$
& Intrinsic proliferation rate of TMZ-resistant cells $R_E$
&  $\frac{r_1}{2}$ 
& d$^{-1}$
& DF \cite{Sinelshchikov2025}
\\
$K$
& Tumor carrying capacity
& $5\times10^{12}$
& cells
& \cite{Forys}
\\
$\beta_1$
& Transition rate from proliferative to quiescent state
& $[0.2,\,0.6]$
& d$^{-1}$
& \cite{segura2022optimal,PeralesPaton2026}
\\
$\beta_2$
& Transition rate from quiescent to proliferative state
& Eq.~\eqref{eq:flowQP} 
& d$^{-1}$
& \cite{ayala2021optimal}
\\
$\mathrm{Ki\text{-}67}$
& Basal proliferative fraction
& $[0.1,\,0.5]$
& --
& \cite{dahlrot2021prognostic,PeralesPaton2026}
\\
$\delta_1$
& Initial fraction of TMZ-resistant tumor cells
& $[10^{-4},\,0.1]$
& --
& 
\cite{Sinelshchikov2025}
\\
$\delta_2$
& Initial fraction of CAR-T-resistant tumor cells
& $[0.1,\,0.3]$
& --
&\cite{ORourke2017}
\\
$\tau$
& Damaged-cell clearance rate
& $\left[r_1/100,\,2r_1\right]$
& d$^{-1}$
& \cite{joiner2009basic}
\\
$T_0$
& Initial tumor size
& $[10^7,\,10^{9}]$
& cells
& \cite{Sinelshchikov2025}
\\
\hline
\addlinespace[4pt]
\multicolumn{5}{@{}l}{\textit{CAR-T-cell dynamics}}\\
\addlinespace[2pt]
$\rho_1$
& Natural CAR-T-cell loss/inactivation rate
& $[1/30,\,1/7]$
& d$^{-1}$
& \cite{Ghorashian}
\\
$\rho_2$
& Maximal CAR-T stimulation rate by $S$
& $[0.1,\, 9]$
& d$^{-1}$
& \cite{Ode2}
\\
$\rho_3$
& Maximal CAR-T stimulation rate by $R_E$
& $[0.1,\, 9]$
& d$^{-1}$
& \cite{Ode2}
\\
$\rho_4$
& Tumor-mediated CAR-T inactivation rate
& $[0.01,\,0.2]$
& d$^{-1}$
& \cite{Sinelshchikov2025}
\\
$g_1$
& $S$ population for half-maximal CAR-T stimulation
& $1\times10^{10}$
& cells
& \cite{Ode2}
\\
$g_2$
& $R_E$ population for half-maximal CAR-T stimulation
& $1\times10^{10}$
& cells
& \cite{Ode2}
\\
$g_3$
& CAR-T concentration for half-maximal tumor inactivation
& $2\times10^9$
& cells
& \cite{Ode2}
\\
$\alpha_2$
& CAR-T killing coefficient against antigen-positive tumor cells
& $2.5\times10^{-10}$
& d$^{-1}$ cell$^{-1}$
& \cite{Ode2}
\\
$v$
& CAR-T cells administered per infusion
& $[10^7,\,10^9]$
& cells
& \cite{goff2019pilot,Sinelshchikov2025}
\\
\hline
\addlinespace[4pt]
\multicolumn{5}{@{}l}{\textit{Temozolomide dynamics and response}}\\
\addlinespace[2pt]

$\alpha_1$
& TMZ-induced tumor-cell killing rate
& $[0.1,\,1]$
& d$^{-1}$
& \cite{Delobel_PLOS}
\\
$\epsilon_1$
& TMZ-induced transition rate to resistance
& $[0.1,\,0.6]$
& d$^{-1}$
& \cite{Delobel_PLOS}
\\
$\alpha_3$
& TMZ-induced CAR-T-cell killing rate
& $\alpha_3=\alpha_1$
& d$^{-1}$
& \cite{Sinelshchikov2025}
\\
$\mu$
& TMZ first-order decay rate
& $8.32$
& d$^{-1}$
& \cite{Delobel_PLOS}
\\
$E_0$
& Normalized TMZ efficacy increment per administration
& $[0,\,1]$
& --
& \cite{PeralesPaton2026}
\\
\hline
\addlinespace[4pt]
\multicolumn{5}{@{}l}{\textit{Radiotherapy response}}\\
\addlinespace[2pt]
$\gamma$
& Fraction of quiescent cells induced to proliferate by  RT
& $[0,\,1]$
& --
& \cite{PeralesPaton2026}
\\
$\alpha$
& Linear LQ coefficient for proliferative MG cells
& $[0.1,\,0.2]$
& Gy$^{-1}$
&\cite{Pedicini2014,vanLeeuwen2018}
\\
$\frac{\alpha}{\beta}$
& Ratio of linear and quadratic LQ coefficients
& $[5,\,10]$
& Gy
&\cite{Pedicini2014,vanLeeuwen2018}
\\
$\alpha_T$
& Linear radiosensitivity coefficient for T cells
& $[0.43,\,0.49]$
& Gy$^{-1}$
& \cite{paganetti2023review}
\\
$d_{\mathrm{RT}}$
& RT dose per fraction
& $2$
& Gy
& \cite{Stupp2005}
\\

\end{tabular}
\end{table*}

Finally, RT-related parameters account for interpatient variability in radiosensitivity. For proliferative MG cells, the LQ parameters are informed by clinical estimates: Pedicini et al. reported $\alpha=0.12~\mathrm{Gy}^{-1}$ (95\% CI: $0.10$--$0.14~\mathrm{Gy}^{-1}$) and $\alpha/\beta=8~\mathrm{Gy}$ (95\% CI: $5.0$--$10.8~\mathrm{Gy}$) for GBM \cite{Pedicini2014}. Based on these estimates and the broader clinical ranges reviewed in \cite{vanLeeuwen2018}, we sample $\alpha$ from $[0.1,0.2]~\mathrm{Gy}^{-1}$ and $\alpha/\beta$ from $[5,10]~\mathrm{Gy}$, with $\beta$ subsequently computed as $\beta=\frac{\alpha}{\alpha/\beta}$. Instead, CAR-T radiosensitivity is 
based on the T-cell radiosensitivity estimates reviewed by Paganetti \cite{paganetti2023review}: linear dose-response curve with $\alpha_T\in[0.43,0.49]~\mathrm{Gy}^{-1}$. The standard RT dose per fraction is fixed at $d_{\mathrm{RT}}=2$ Gy in the reference CRT protocols \cite{Stupp2005}.

\subsection{Virtual clinic: virtual patients and \textit{in silico} trials}\label{sec:methods_vp_insilico_trials}

Virtual-patient approaches extend mechanistic mathematical models from the analysis of a single parameterization to the study of heterogeneous populations. In this and our previous frameworks \cite{Sinelshchikov2025,PeralesPaton2026}, a VP is a biologically plausible realization of the model defined by a specific set of parameter values and initial conditions, representing one possible disease and treatment-response phenotype within the population \cite{Allen2016VPop,Craig2023VCT}. 
VPs are not intended to reproduce specific real individuals, but rather to capture plausible inter-patient variability in the biological processes represented by the model. By contrast, when a mechanistic model is initialized and personalized using subject-specific data, it can provide the basis for constructing a patient-specific digital twin aimed at representing and predicting the evolution of that particular patient \cite{Wu2022DigitalTwin}. Thus, VP cohorts enable the study of how variability in biologically relevant characteristics propagates into heterogeneity in disease progression and therapeutic response.

Formally, let $\boldsymbol{\theta}$ denote the vector of model parameters, partitioned into fixed parameters $\boldsymbol{\theta}_{F}$ and parameters $\boldsymbol{\theta}_{V}$ that capture inter-patient heterogeneity: $\boldsymbol{\theta}=\left(\boldsymbol{\theta}_{F},\boldsymbol{\theta}_{V}\right).$
Each VP $VP_i$ is defined by a particular realization $\boldsymbol{\theta}_{V}^{(i)}$.  Parameters are dynamically sampled or held constant depending on whether the biological processes they govern exhibit interpatient heterogeneity or relative homogeneity within the clinical setting, respectively \cite{Craig2023VCT}. Specifically, parameters representing highly variable biological mechanisms among patients are allowed to vary to capture diverse progression and response profiles following biologically feasible ranges in Table~\ref{tab:model_parameters}. Crucially, parameters governing therapeutic interventions---such as treatment doses, administration schedules, and sequence order---do not define the intrinsic characteristics of the VPs; rather, they strictly delineate the operational parameters of the specific therapeutic protocol being evaluated.

The VP cohort is constructed using the biologically and clinically supported parameter ranges summarized in Table~\ref{tab:model_parameters}. Whenever sufficiently characterized population distributions are unavailable, parameters are sampled from uniform distributions over the corresponding admissible intervals, as commonly done in exploratory model-based virtual trials \cite{Craig2023VCT,Gevertz2024VCT}. Uniform sampling is used here as a bounded representation of the available uncertainty and should not be interpreted as evidence that the corresponding biological quantities are uniformly distributed in the clinical population. Indeed, the choice of prior parameter distributions can influence both the heterogeneity of the resulting virtual population and the predicted treatment outcomes \cite{Gevertz2024VCT}.

Note that each VP is characterized not only by different kinetic and treatment-response parameters but also by a distinct initial tumor composition. Thus, a literature-informed plausible cohort is constructed by sampling within experimentally and clinically supported ranges, and enforcing the model constraints described above.  Repeating the sampling procedure $N$ times generates a cohort $\mathcal{V}=\left\{VP_1,\ldots,VP_N\right\}.$ An \textit{in silico} trial is then performed by applying a prescribed treatment protocol to every member of $\mathcal{V}$ and recording the corresponding model outcomes. This strategy enables systematic evaluation of treatment schedules over a population of heterogeneous tumor and treatment-response phenotypes and has been used to investigate both treatment efficacy and robustness in mathematical oncology \cite{Barish2017VEPART,Craig2023VCT}.

An important advantage of the computational setting is that the same VP can be exposed independently to different therapeutic strategies \cite{Michael2024VirtualCohorts}. Accordingly, all protocols that are directly compared in this study are simulated using the same underlying VP cohort: the biological parameters and initial conditions of $VP_i$ remain unchanged, while only the treatment protocol is modified. This provides a matched comparison between interventions and prevents differences between independently sampled populations from confounding treatment effects. When we reproduce specific clinical trials in Section \ref{sec:num_simulations}, we sample a new VP cohort matching the size of the clinical study under reproduction. For a protocol $\mathcal{P}$, we denote the simulated survival time of VP $i$ by $ T_{s,i}^{\mathcal{P}}.$

Population-level treatment effects are evaluated from the distribution of simulated survival times. Kaplan--Meier (KM) curves and median OS are used to summarize survival under each protocol, and survival distributions are compared using two-sided log-rank tests, following standard clinical-trial methodology and our previous virtual-trial frameworks~\cite{Sinelshchikov2025,PeralesPaton2026}. Additionally, Cox proportional hazards regression is used to estimate Hazard Ratios (HRs). Since death is the event of interest, an HR $< 1$ denotes a reduction in the mortality rate, signifying therapeutic efficacy relative to the untreated reference.

For visualization of the temporal model dynamics, we additionally define the median virtual patient (MVP) as the parametrization obtained by assigning the median value of each VP parameter distribution. The MVP provides a representative parameter set with which to illustrate the evolution of tumor subpopulations and treatments. It should be noted that this definition does not imply that the MVP corresponds to an actual member of the sampled cohort or to the VP with median survival; population-level conclusions are always obtained from the complete virtual cohort.

The resulting virtual clinic is therefore used as a controlled computational environment in which biologically plausible heterogeneity can be propagated through the mechanistic model and virtual populations can be challenged with alternative therapeutic strategies. The purpose of these simulations is not to provide exact subject-level predictions, but to identify robust population-level trends, treatment schedules worthy of further investigation, and biological parameters potentially associated with prognosis or differential treatment benefit\cite{Viceconti2021InSilico}.

\section{Results}

\subsection{Mathematical analysis}

In this subsection we present the mathematical analysis of model \eqref{eq:new_model4}. First, we introduce the concept of essentially non-negative functions, which will be used to prove the biological relevance of system \eqref{eq:new_model4}. Consider a smooth function $f$ defined over some domain $\Omega\subset\mathbb{R}^n$ and its associated dynamical system
\begin{equation}\label{eq:defgeneraldynamicalsystem}
    \dot{x}=f(x).
\end{equation}

\begin{Definition}
    The function $f$ is said to be essentially non-negative if for every $i=1,\dotsc,n$ and $x=(x_1,\dotsc,x_i=0,\dotsc,x_n)\in\overline{\mathbb{R}}^n_+$, we have $f_i(x)\geq0$.
\end{Definition}

\begin{Proposition}[See Proposition 4.1 from \cite{nonnegativityTheorem}]\label{prop:nonnegativeequiv}
    Consider the dynamical system \eqref{eq:defgeneraldynamicalsystem} and suppose $\overline{\mathbb{R}}^n_+\subset\Omega$. Then, $\overline{\mathbb{R}}^n_+$ is an invariant surface of the system if and only if $f$ is essentially non-negative. 
\end{Proposition}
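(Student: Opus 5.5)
We prove the two implications separately. Throughout we assume $f$ is smooth, or at least locally Lipschitz, on $\Omega\supset\overline{\mathbb{R}}^n_+$, so solutions are unique. "Invariant" is read as forward invariance: every solution starting in $\overline{\mathbb{R}}^n_+$ stays there for all $t\geq0$ in its interval of existence.

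\emph{Necessity.} Assume $\overline{\mathbb{R}}^n_+$ is invariant. Take a point $x\in\overline{\mathbb{R}}^n_+$ with $x_i=0$, and let $x(t)$ be the solution with $x(0)=x$. Then $x_i(t)\geq0$ for small $t>0$ while $x_i(0)=0$, so
\[
f_i(x)=\dot x_i(0)=\lim_{t\to0^+}\frac{x_i(t)}{t}\geq0 .
\]
Hence $f$ is essentially non-negative.

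\emph{Sufficiency.} Assume $f$ is essentially non-negative. The difficulty is that we only know $f_i\geq0$ on the face $\{x_i=0\}$, and this is not enough to exclude tangential exit. We therefore perturb the field. For $\varepsilon>0$ set $f^\varepsilon=f+\varepsilon\mathbf{1}$, where $\mathbf{1}=(1,\dots,1)$, and let $x^\varepsilon$ be the solution of $\dot x=f^\varepsilon(x)$ with initial condition $x_0+\varepsilon\mathbf{1}$, which lies in the interior of the orthant.

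Suppose $x^\varepsilon$ leaves the orthant, and let $t^*$ be the first time some coordinate, say $x^\varepsilon_i$, reaches $0$. At $t^*$ the point $x^\varepsilon(t^*)$ still lies in $\overline{\mathbb{R}}^n_+$ and has $i$-th coordinate $0$. Essential non-negativity then gives
\[
\dot x^\varepsilon_i(t^*)=f_i\bigl(x^\varepsilon(t^*)\bigr)+\varepsilon\geq\varepsilon>0 .
\]
But $x^\varepsilon_i>0$ on $[0,t^*)$ and $x^\varepsilon_i(t^*)=0$, which forces $\dot x^\varepsilon_i(t^*)\leq0$, a contradiction. So $x^\varepsilon(t)$ stays in the interior for as long as it exists.

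Finally we let $\varepsilon\to0$. Continuous dependence of solutions on parameters and initial data holds on compact time intervals inside the maximal existence interval of the unperturbed solution; this is where the Lipschitz assumption is used. It gives $x^\varepsilon\to x$ uniformly on such intervals. The closed orthant is closed under limits, so $x(t)\in\overline{\mathbb{R}}^n_+$, which is the invariance claim.

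Nothing here is delicate beyond this approximation step: it is what replaces a naive first-exit argument, which fails when $f_i=0$ on the boundary. An alternative route is to write $f_i(x)=f_i(x)|_{x_i=0}+x_i\,g_i(x)$ using smoothness, and then compare $x_i$ with the solution of a linear equation. The perturbation argument is simpler, and it is also the form in which Proposition~\ref{prop:nonnegativeequiv} is cited from \cite{nonnegativityTheorem}.
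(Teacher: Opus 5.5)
The paper does not prove this proposition. It imports it from Proposition 4.1 of \cite{nonnegativityTheorem} and only applies it, in the proof of Proposition~\ref{prop:existenceanduniqueness}, by listing the boundary values $f_i$ face by face. Your argument is a self-contained replacement for that citation, and it is correct.

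\emph{Necessity.} The one-sided difference quotient $x_i(t)/t\geq 0$ gives $f_i(x)\geq 0$ directly.

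\emph{Sufficiency.} The shift $f+\varepsilon\mathbf{1}$, started from $x_0+\varepsilon\mathbf{1}$, makes the boundary inequality strict, so the first-hitting-time contradiction is valid. Continuous dependence on compact subintervals of the forward maximal interval, together with closedness of $\overline{\mathbb{R}}^n_+$, then completes the proof.

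\emph{What your version adds over the citation.} It makes explicit the hypotheses actually used. The first is that $\Omega$ is open and $f$ is locally Lipschitz; both hold for the nine-dimensional model, where $\Omega=\{g_1+S>0,\,g_2+R_E>0,\,g_3+C>0\}$ and $f$ is smooth. The second is that invariance means forward invariance. That point is not cosmetic: under two-sided invariance the ``if'' direction fails. For example, $\dot x=1$ on $\mathbb{R}$ is essentially non-negative, yet the solution through $0$ is negative for $t<0$. So your reading is the only one under which the paper's phrase ``invariant surface'' gives a true statement (and the closed orthant is not a surface anyway). Your remark that the perturbation replaces a naive exit argument is also apt for this paper. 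Several boundary values there vanish identically, e.g.\ $f_8=0$ on $\{C=0\}$ and $f_9=0$ on $\{E=0\}$. That is exactly the degenerate case your $\varepsilon$-step handles.

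\emph{One correction.} Your closing claim that the perturbation argument is ``the form in which'' the result appears in \cite{nonnegativityTheorem} is not supported by anything in the paper. The paper gives no indication of how the cited source proves it, so drop or hedge that remark.
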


\begin{Proposition}\label{prop:existenceanduniqueness}
    For every non-negative initial condition, the system \eqref{eq:new_model4} admits a unique non-negative solution defined for every $t\geq0$. Furthermore, the system presents continuous dependence on the initial conditions and parameters.
\end{Proposition}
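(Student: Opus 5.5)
Our plan has three parts: local well-posedness, invariance of the non-negative orthant, and a priori bounds that rule out finite-time blow-up. Continuous dependence then follows from standard ODE theory.

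\textbf{Local existence.} Let $f$ denote the right-hand side of \eqref{eq:new_model4}. It is a polynomial in the state variables, plus the rational terms $\rho_2SC/(g_1+S)$, $\rho_3R_EC/(g_2+R_E)$ and $\rho_4TC/(g_3+C)$. These terms are smooth on an open set $\Omega=\{S>-g_1,\ R_E>-g_2,\ C>-g_3\}$, which contains $\overline{\mathbb{R}}^9_+$. Hence $f$ is $C^1$, and in particular locally Lipschitz, on $\Omega$, and also smooth in the parameters. The Picard--Lindel\"of theorem then gives a unique maximal solution on $[0,t_{\max})$. The classical theorem on continuous (indeed differentiable) dependence on initial data and parameters applies on every compact subinterval of $[0,t_{\max})$.

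\textbf{Non-negativity.} We check that $f$ is essentially non-negative and then invoke Proposition~\ref{prop:nonnegativeequiv}.
\begin{itemize}
\item On $S=0$: $\dot S=\beta_2Q\ge0$, and similarly on $R_C=0$.
\item On $R_E=0$: $\dot R_E=\beta_2Q_E+\epsilon_1(S+R_C)E\ge0$.
\item On $Q=0$, $Q_C=0$, $Q_E=0$: the corresponding rates are $\beta_1S$, $\beta_1R_C$, $\beta_1R_E$, all non-negative.
\item On $D=0$: $\dot D=\alpha_1(S+R_C)E+\alpha_2(S+R_E)C\ge0$.
\item On $C=0$ and $E=0$: every term is proportional to $C$ or $E$, so the derivative vanishes.
\end{itemize}
Since $\beta_2\ge0$ by the admissibility constraint on \eqref{eq:flowQP}, the orthant is invariant and the solution remains non-negative on $[0,t_{\max})$.

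\textbf{Global existence.} This is the main step. We need bounds showing the solution cannot leave every compact set in finite time.
\begin{itemize}
\item First, $E(t)=E(0)e^{-\mu t}$ is explicitly bounded.
\item Next, sum the tumor equations. All treatment terms either cancel (transfers to $R_E$ and $D$) or are non-positive, which gives
\[
\dot T\le \max(r_1,r_2)\,(S+R_C+R_E)\Bigl(1-\tfrac{T}{K}\Bigr)-\tau D\le \max(r_1,r_2)\,T.
\]
Hence $T(t)\le T(0)e^{\max(r_1,r_2)t}$, and in fact $T\le\max\{T(0),K\}$ by logistic comparison. Since each compartment is non-negative and bounded by $T$, all tumor variables stay bounded.
\item Finally, dropping the non-positive terms in the $C$ equation and using $S/(g_1+S)\le1$, $R_E/(g_2+R_E)\le1$ gives $\dot C\le(\rho_2+\rho_3)C$, so $C$ grows at most exponentially.
\end{itemize}
These bounds are finite on every bounded interval, so $t_{\max}=\infty$.

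The only delicate point is the bookkeeping in the sum for $\dot T$. One must verify that the $\epsilon_1$, $\alpha_1$, $\alpha_2$ transfers cancel between $S,R_C,R_E,D$, leaving only logistic growth and $-\tau D$. The bound $\dot T\le \max(r_1,r_2)T$ already suffices, so the sharper logistic bound is not needed.
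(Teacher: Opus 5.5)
Your argument is correct. Its skeleton matches the paper's: Picard--Lindel\"of on $\Omega$, essential non-negativity via Proposition~\ref{prop:nonnegativeequiv}, a priori bounds plus the extension theorem, the standard continuous-dependence theorems, and the same estimate $\dot C\le(\rho_2+\rho_3)C$. The central a priori estimate, however, is obtained differently.

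The paper works pair by pair along the one-way structure of the transfers. It bounds $S+Q$ and $R_C+Q_C$ by $e^{r_1t}$. It then bounds $R_E+Q_E$ by variation of constants, treating the $\epsilon_1(S+R_C)E$ influx as a source, with a separate case for $r_1=r_2$. Finally it bounds $D$ by one more Gr\"onwall step whose forcing is assembled from all the previous bounds.

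You instead sum all seven tumor compartments, including $D$. The $\alpha_1$, $\epsilon_1$ and $\alpha_2$ transfers then cancel exactly, giving $\dot T=[r_1(S+R_C)+r_2R_E](1-T/K)-\tau D$. Your route is shorter and needs no case distinction. It also gives the time-uniform bound $T\le\max\{T(0),K\}$, so all tumor variables are bounded for all $t\ge0$, not just on finite intervals. The paper's cascade never needs the exact cancellation: it uses only signs and the fact that nothing flows back out of $R_E$ or $D$. In exchange, it only produces bounds that grow exponentially in time.

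There is one slip to fix. The intermediate inequality $\dot T\le\max(r_1,r_2)(S+R_C+R_E)(1-T/K)-\tau D$ is false when $T>K$ and $r_1\neq r_2$, because the logistic factor is then negative. This happens in the paper's own cohort, where $r_2=r_1/2$, whenever $R_E>0$. Your conclusion $\dot T\le\max(r_1,r_2)T$ still holds, since for $T>K$ the growth term is non-positive. Just treat the cases $T\le K$ and $T>K$ separately.

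Your boundary check on $R_E=0$ correctly keeps the term $\epsilon_1(S+R_C)E$. The paper's list omits it.
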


\begin{proof}
    Consider any non-negative initial condition $\mathbf{x}_0\in\overline{\mathbb{R}}^9_+$. We begin the proof by proving that the non-negative orthant, $\overline{\mathbb{R}}^9_+$, is invariant, which implies that for every initial condition $\mathbf{x}_0\in\overline{\mathbb{R}}^9_+$, any solution must be non-negative. In order to prove this, we use Proposition \eqref{prop:nonnegativeequiv}. We have
    \begin{eqnarray*}
        &f_1(0,Q,R_C,Q_C,R_E,Q_E,D,C,E)=\beta_2Q,\\ &f_2(S,0,R_C,Q_C,R_E,Q_E,D,C,E)=\beta_1S \\
        &f_3(S,Q,0,Q_C,R_E,Q_E,D,C,E)=\beta_2Q_C,\\
        &f_4(S,Q,R_C,0,R_E,Q_E,D,C,E)=\beta_1R_C,\\
        &f_5(S,Q,R_C,Q_C,0,Q_E,D,C,E)=\beta_2Q_E,\\
        &f_6(S,Q,R_C,Q_C,R_E,0,D,C,E)=\beta_1R_E,\\
        &f_7(S,Q,R_C,Q_C,R_E,Q_E,0,C,E)=\\&\alpha_1(S+R_C)E+\alpha_2(S+R_E)C,\\
        &f_8(S,Q,R_C,Q_C,R_E,Q_E,D,0,E)=0, \\
        &f_9(S,Q,R_C,Q_C,R_E,Q_E,D,C,0)=0.
    \end{eqnarray*}
    Since we are working with $x\in\overline{\mathbb{R}}^9_+$ and all parameters are non-negative, we have the first part of the result. From now on, we will consider only non-negative solutions for system \eqref{eq:new_model4}.

    We consider now the existence and uniqueness of solutions. First, notice that our system is smooth in the following domain
    \begin{equation*}
        \Omega=\{x\in\mathbb{R}^9\,|\, g_1+S>0, g_2+R_E>0, g_3+C>0\}
    \end{equation*}
    and we have $\overline{\mathbb{R}}^9_+\subset\Omega$. Thus, by Picard-Lindelöf, we have local existence and uniqueness for every non-negative initial condition. Furthermore, there is a maximal solution defined over some interval $(T_-(0,x_0), T_+(0,x_0))\subset \mathbb{R}$.
    
    We prove now that the solution is defined globally, that is, for every $t\geq0$. To this aim we use the extension theorem (see Corollary 2.15, Ref. \cite{Teschl}). First, notice that the equation related to $E$ has an explicit solution $E(t)=E_0e^{-\mu t}\leq E_0$. Now, using the non-negativity of solutions and parameters, we have 
    \begin{align}
        \dot{S}+\dot{Q}&=r_1S\left(1-\dfrac{T}{K}\right)-(\alpha_1+\epsilon_1)ES-\alpha_2CS\nonumber\\&\leq r_1S\leq r_1(S+Q).
    \end{align}
    Thus, by Grönwall's Inequality, we get 
    \begin{equation}
        S+Q\leq (S_0+Q_0)e^{r_1t}.
    \end{equation}
    Since $S,Q\leq S+Q$ (due to the non-negativity of solutions), we get that both $S$ and $Q$ are bounded by continuous function, that is, bounded for every finite $t\geq0$. 
    
    Using the same reasoning for the CAR-T-resistant subpopulation, we get
    \begin{equation}
        R_C+Q_C\leq (R_{C_0}+Q_{C_0})e^{r_1t},
    \end{equation}
    and therefore $R_C$ and $Q_C$ are also bounded by continuous functions. 

    Now, for the TMZ-resistant population we have
    \begin{eqnarray}
        &\dot{R_E}+\dot{Q_E}\leq r_2R_E+\epsilon_1(S+R_C)E_0 \nonumber\\
        &\leq r_2(R_E+Q_E)+\epsilon_1(S+Q+R_C+Q_C)E_0 \nonumber\\
        &\leq r_2(R_E+Q_E)+\epsilon_1(S_0+Q_0+R_{C_0}+Q_{C_0})E_0e^{r_1t}\nonumber\\
        &\equiv r_2(R_E+Q_E)+Ae^{r_1t},
    \end{eqnarray}
    where $A=\epsilon_1(S_0+Q_0+R_{C_0}+Q_{C_0})E_0$. 
    Therefore, by Grönwall's inequality, we obtain

\begin{equation}
R_E+Q_E
\leq
(R_{E0}+Q_{E0})e^{r_2t}
+
\begin{cases}
\dfrac{A}{r_1-r_2}
\left(e^{r_1t}-e^{r_2t}\right),
& r_1\neq r_2, \\[2mm]
At e^{r_1t},
& r_1=r_2.
\end{cases}
\label{eq:RE_bound}
\end{equation}

Indeed, when $r_1=r_2$, the integral term reduces to
$At e^{r_1t}$. Hence, in either case, $R_E+Q_E$ is bounded by a
continuous function on every finite time interval. 

    Regarding the CAR-T, using the fact that $\frac{a}{a+b}\leq1$ for any $a,b>0$ we have
    \begin{equation}
        \dot{C}\leq (\rho_2+\rho_3)C\Longrightarrow C\leq C_0e^{(\rho_2+\rho_3)t}
    \end{equation}

    Considering the damaged cells, we obtain
    \begin{equation}
        \dot{D}\leq \alpha_1(S+R_C)E+\alpha_2(S+R_E)C\leq g(t)\leq D+g(t).
    \end{equation}
    where $g$ is the combination of the bounds found for $S, R_C, R_E, C$. Therefore, we obtain that 
    \begin{equation}
        D\leq D_0e^t+\int_0^t g(s)e^{t-s}ds.
    \end{equation}
    This implies that the system is bounded by a continuous functions, and therefore, $\limsup_{t\to T^-_+(0, x_0)}\Vert f(x)\Vert<\infty$ and by the Extension theorem, we obtain the globality of the unique solution. 

    Finally, the continuous dependence over the initial conditions and parameters is ensured by noticing that the domain of $f$ can be extended to 
    \begin{equation}
        \tilde{\Omega}=\Omega\times\mathbb{R}^m_{>0}
    \end{equation}
    where $m=18$ is the number of parameters involved in the system, which is an open set containing all non-negative initial conditions. By using Theorems 1 and 2 from Section 2.3 Ref.\cite{perko2001}, we finish the result.
\end{proof}

\begin{Proposition}\label{pr:origin_unstable} The only tumor-free equilibrium of system \eqref{eq:new_model4} is the origin, which is a saddle for the biologically relevant values of the parameters. 
\end{Proposition}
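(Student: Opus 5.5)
The plan is to characterize tumor-free equilibria directly from the equations, and then determine the local type of the origin through its linearization. For the first part, I set $S=Q=R_C=Q_C=R_E=Q_E=D=0$, so that $T=0$. Equation~\eqref{eq:E} at equilibrium gives $-\mu E=0$, and since $\mu>0$ this forces $E=0$. Substituting into Eq.~\eqref{eq:C}, every stimulation term vanishes because $S=R_E=0$, and the inactivation term vanishes because $T=0$. What remains is $-\rho_1 C=0$, hence $C=0$ because $\rho_1>0$. The remaining equations are then trivially satisfied, so the origin is the unique tumor-free equilibrium. It lies in $\overline{\mathbb{R}}^9_+\subset\Omega$, where the vector field is smooth, as established in the proof of Proposition~\ref{prop:existenceanduniqueness}.

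Next I compute the Jacobian at the origin. Every bilinear term ($ES$, $CS$, $ER_C$, $CR_E$, $(S+R_C)E$, $EC$) has vanishing derivative at the origin, since each partial derivative leaves a factor equal to zero there. The logistic corrections $-r_iX\,T/K$ are quadratic, so they also contribute nothing. For the saturating terms, the derivatives of $\rho_2SC/(g_1+S)$ and $\rho_3R_EC/(g_2+R_E)$ are proportional to $C$ or $S$ (respectively $R_E$), hence zero. For $\rho_4TC/(g_3+C)$, the derivative in $T$ is $C/(g_3+C)=0$ and the derivative in $C$ is $Tg_3/(g_3+C)^2=0$. Consequently the Jacobian is block diagonal. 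It consists of three $2\times2$ blocks for the pairs $(S,Q)$, $(R_C,Q_C)$ and $(R_E,Q_E)$, followed by the scalar entries $-\tau$, $-\rho_1$ and $-\mu$ for $D$, $C$ and $E$.

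For the pair $(S,Q)$ the block is
\begin{equation*}
J_1=\begin{pmatrix} r_1-\beta_1 & \beta_2\\ \beta_1 & -\beta_2\end{pmatrix},\qquad \det J_1=-(r_1-\beta_1)\beta_2-\beta_1\beta_2=-r_1\beta_2 .
\end{equation*}
The block for $(R_C,Q_C)$ is identical to $J_1$. The block for $(R_E,Q_E)$ has $r_2$ in place of $r_1$, so its determinant is $-r_2\beta_2$. A real $2\times2$ matrix with negative determinant has two real eigenvalues of opposite sign. Hence, whenever $r_1,r_2,\beta_2>0$, each block contributes exactly one positive and one negative eigenvalue. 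Together with $-\tau,-\rho_1,-\mu<0$, the spectrum contains three positive and six negative eigenvalues and no zero eigenvalue. The origin is therefore hyperbolic, and by the Hartman--Grobman theorem it is a saddle, with a three-dimensional unstable manifold and a six-dimensional stable manifold.

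The main point requiring care is pinning down what ``biologically relevant values'' must mean for this conclusion to hold. We need $\tau,\rho_1,\mu,r_1,r_2>0$, which holds throughout the sampled ranges of Table~\ref{tab:model_parameters} (there $r_2=r_1/2>0$). We also need $\beta_2>0$, which by Eq.~\eqref{eq:flowQP} is equivalent to the strict inequality $\beta_1>r_1(1-\mathrm{Ki\text{-}67})$ with $\mathrm{Ki\text{-}67}\in(0,1)$. I will state this strict admissibility condition explicitly. I will also remark on the degenerate boundary case $\beta_2=0$: there each block has eigenvalues $r_i-\beta_1$ and $0$, the equilibrium is non-hyperbolic, and the quiescent compartments decouple, so that case is excluded from the claim.
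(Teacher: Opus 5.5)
Your proof is correct and follows the same route as the paper: setting the tumor compartments to zero forces $E=C=0$ (and $D=0$), and linearizing at the origin gives the eigenvalues $-\tau,-\rho_1,-\mu$ plus three $2\times2$ proliferative--quiescent blocks, each with one positive and one negative eigenvalue. Using $\det J_i=-r_i\beta_2<0$ instead of the paper's explicit eigenvalue formulas is a small improvement: it covers the case $r_i=\beta_1+\beta_2$, which the paper treats separately, without any case split, and it makes explicit that $\beta_2>0$ (strict inequality in the admissibility condition) is required.
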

\begin{proof}
Consider equilibria of \eqref{eq:new_model4} that correspond to complete tumor eradication, i.e. when $S=Q=R_{C}=Q_{C}=R_{E}=Q_{E}=0$. This yields that $E=C=D=0$. Thus, the only possible fixed point, which correspond to a tumor-free state is the origin. 

Assume that $r_{1}\neq\beta_{1}+\beta_{2}$ and $r_{2}\neq\beta_{1}+\beta_{2}$, then the eigenvalues of the Jacobian matrix of \eqref{eq:new_model4} evaluated at the $\mathcal{O}=(0,0,\dots,0)$ are 
\begin{equation}
\begin{gathered}
\lambda_{1}=-\rho_{1}, \quad \lambda_{2}=-\mu, \quad \lambda_{3}=-\tau,\\
\lambda_{4,5,6,7}=\frac{r_{1}-\beta_{1}-\beta_{2}}{2}\left[1\pm\sqrt{1+\frac{4\beta_{2}r_{1}}{(r_{1}-\beta_{1}-\beta_{2})^{2}}}\right],\\
\lambda_{8,9}=\frac{r_{2}-\beta_{1}-\beta_{2}}{2}\left[1\pm\sqrt{1+\frac{4\beta_{2}r_{2}}{(r_{2}-\beta_{1}-\beta_{2})^{2}}}\right].
\end{gathered}
\end{equation}
As a result, since all the parameters are positive, it is clear that the origin is a saddle, since all eigenvalues are not zero and the last three pairs of them have opposite signs in each pair. 

Suppose that one or both  of the relations $r_{1}=\beta_{1}+\beta_{2}$, $r_{2}=\beta_{1}+\beta_{2}$ holds. Then, one can verify that for positive values of the parameters, the Jacobian matrix has real eigenvalues with three pairs of eigenvalues of opposite signs. This completes the proof.
\end{proof}

Taking into account the structure of our system, we also get the immediate result
\begin{Proposition}
    System \eqref{eq:new_model4} admits two invariant surfaces defined by $H_1=C$ and $H_2=E$. 
\end{Proposition}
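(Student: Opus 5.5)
The statement is to be read as saying that the hyperplanes $\{C=0\}$ and $\{E=0\}$, the zero level sets of $H_1=C$ and $H_2=E$, are invariant under the flow of system \eqref{eq:new_model4}. My plan is to use the standard criterion for a level set $\{H=0\}$ of a smooth function $H$. It is enough to show that the derivative of $H$ along trajectories factors as $\dot H = H\cdot \Phi(x)$, with $\Phi$ smooth on the domain $\Omega$ from the proof of Proposition~\ref{prop:existenceanduniqueness}. Once this holds, uniqueness of solutions, already established in that proposition, gives invariance directly.

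The key steps, in order:

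\textbf{Step 1 ($H_2=E$).} Eq.~\eqref{eq:E} gives $\dot H_2=-\mu H_2$. Hence along any solution $E(t)=E(0)e^{-\mu t}$, and $E(0)=0$ forces $E\equiv 0$ for all $t\ge 0$ (and in fact for all $t$ in the maximal interval). So $\{E=0\}$ is invariant.

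\textbf{Step 2 ($H_1=C$).} Rewrite Eq.~\eqref{eq:C} as
\begin{equation*}
\dot C = C\,\Phi(x),\qquad \Phi(x)= -\rho_1+\frac{\rho_2 S}{g_1+S}+\frac{\rho_3 R_E}{g_2+R_E}-\frac{\rho_4 T}{g_3+C}-\alpha_3 E .
\end{equation*}
$\Phi$ is smooth on $\Omega$, because there $g_1+S$, $g_2+R_E$ and $g_3+C$ are all positive. Take a solution $x(t)$ with $C(0)=0$. Then $C(t)=C(0)\exp\!\left(\int_0^t \Phi(x(s))\,ds\right)=0$. Alternatively, observe that the vector field restricted to $\{C=0\}$ has vanishing $C$-component; solving the reduced eight-dimensional system and appending $C\equiv0$ then produces a solution of the full system, and uniqueness identifies it with $x(t)$.

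\textbf{Step 3.} The same argument applies to the intersection $\{C=E=0\}$, so the joint level set is invariant too. Restricting to $\overline{\mathbb{R}}^9_+$ is consistent with Proposition~\ref{prop:existenceanduniqueness}.

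The main obstacle is only a mild one: the factorization in Step 2 must stay legitimate along the trajectory. This requires the trajectory to remain in $\Omega$, where the denominators are nonzero. I would handle it by citing the non-negativity and global existence from Proposition~\ref{prop:existenceanduniqueness}, which keep $S,R_E,C\ge0$, so all three denominators stay positive. Finally, I would note that the impulsive updates \eqref{eq:tmz_application} and \eqref{eq:cart_application} break these invariances at administration times; the statement concerns only the continuous system.
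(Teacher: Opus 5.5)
Your proposal is correct and matches the paper's reasoning. The paper gives no separate proof and calls the result immediate from the structure of the system: $\dot E=-\mu E$, and every term of $\dot C$ carries the factor $C$, so $C$ and $E$ satisfy the Darboux-type condition $\mathcal{X}(H)=kH$ used later in the paper, with cofactors $\Phi$ and $-\mu$; your factorization simply makes this explicit, and your caveat that the impulsive updates break the invariance at administration times is correct and useful.
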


Now let us consider the dynamics of \eqref{eq:new_model4} when there is no treatment, i.e. when $E=C=0$. Recall, that we assume that $D(0)=0$. Thus, the condition $E=C=0$ implies the $D=0$ for $t>0$.

\begin{Proposition}\label{prop:unique_linear_invariant}
Suppose that $r_{2}\neq r_{1}$. Then, at $C=E=D=0$ the hyperplane defined by $H_{1}=T_{1}-K$ is the unique biologically relevant polynomial invariant surface of degree 1 of \eqref{eq:new_model4}, where $T_{1} = S + Q + R_C + Q_C + R_E + Q_E.$.
\end{Proposition}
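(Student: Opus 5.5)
We look for all degree-one polynomial invariant surfaces of the reduced system in the six variables $x=(S,Q,R_C,Q_C,R_E,Q_E)$, setting $C=E=D=0$. We use the Darboux definition. A surface $H=c_0+\sum_i c_i x_i$ is invariant if there is a polynomial cofactor $\lambda(x)$ with $\dot H=\lambda H$. Since $\dot H$ is a polynomial of degree at most two, $\lambda$ must be affine: $\lambda=k_0+\sum_i k_i x_i$.

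\textbf{Step 1: compute $\dot H$.} The transition terms $\pm\beta_1,\pm\beta_2$ enter each proliferative--quiescent pair antisymmetrically, so they contribute only through the differences $c_1-c_2$, $c_3-c_4$, $c_5-c_6$. We obtain
\begin{equation*}
\dot H=\Lambda(x)-\frac{T_1}{K}\,L(x),\qquad L(x)=r_1c_1S+r_1c_3R_C+r_2c_5R_E,
\end{equation*}
where $\Lambda$ is linear and equal to $L$ plus the transition contributions.

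\textbf{Step 2: the quadratic part.} Comparing the degree-two parts of $\dot H=\lambda H$ gives the identity of quadratic forms
\begin{equation*}
-\frac{1}{K}\,T_1\,L=\Big(\sum_i k_ix_i\Big)\Big(\sum_i c_ix_i\Big).
\end{equation*}
We exclude constant $H$ (where $\sum_i c_ix_i\equiv0$) as not biologically relevant.
\begin{itemize}
\item If $L\equiv0$, then either $k\equiv0$ or $c\equiv0$.
\item Otherwise, unique factorization in $\mathbb{R}[x]$ forces the linear part of $H$ to be proportional either to $T_1$ or to $L$.
\end{itemize}

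\textbf{Step 3: case analysis on the linear and constant terms.}

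\emph{Case (a): linear part $cT_1$.} Then $c_i=c$ for all $i$, all transition contributions cancel, and $\Lambda=L=c\,(r_1(S+R_C)+r_2R_E)$. Step 2 then gives $k\cdot x=-\tfrac{1}{K}\,(r_1(S+R_C)+r_2R_E)$. Matching the linear terms gives $L=k_0cT_1+c_0\,k\cdot x$. The $Q$-coefficient of the left side is zero, which forces $k_0=0$. The remaining coefficients then give $c=-c_0/K$. Matching constant terms ($0=k_0c_0$) is automatic. Hence $H=c\,(T_1-K)$.

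\emph{Case (b): linear part proportional to $L$.} Here $c_2=c_4=c_6=0$ and $c_1=c_3$ (both carry the factor $r_1$), with $\lambda$'s linear part proportional to $T_1$. Matching the $Q$, $Q_C$, $Q_E$ coefficients of $\dot H$ yields $\beta_2c_1=k_0\cdot 0+c_0k_Q$ and analogous relations. Matching the $S$, $R_C$, $R_E$ coefficients yields relations involving $r_1-\beta_1$ and $r_2-\beta_1$. Eliminating $c_0$ and $k_0$ should force either $c_1=c_5=0$ (so $H$ is constant, which is excluded) or $r_1=r_2$, which contradicts the hypothesis $r_2\neq r_1$.

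\textbf{Main obstacle.} We expect the hard part to be Step 3(b), together with the degenerate subcases of Step 2 ($L\equiv0$, $k\equiv0$, or $c_5=0$, where the factorization argument gives less information). These must be handled by direct coefficient comparison. The hypothesis $r_2\neq r_1$ should be exactly what prevents a second family, such as a combination weighted by $r_2$ versus $r_1$, from surviving. We would first write out the twelve linear-coefficient equations explicitly and solve them as a linear system in $(c_0,c_i,k_0,k_i)$, checking that its solution space is the line spanned by $T_1-K$.
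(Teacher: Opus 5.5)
Your proposal is correct and follows essentially the paper's route: the Darboux condition with an affine cofactor, the unique-factorization dichotomy on the quadratic part, and then linear-coefficient matching using $\beta_2>0$ and $r_1\neq r_2$. In that dichotomy, a linear part of $H$ proportional to $T_1$ gives $T_1-K$ exactly as in your Case (a), and the alternative is a linear part proportional to $L$ with cofactor proportional to $T_1$.

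Two small points need fixing.
\begin{itemize}
\item In Case (b), $c_1=c_3$ does not follow from proportionality to $L$. It follows from the $Q$, $Q_C$, $Q_E$ coefficients, which give $\beta_2c_1=\beta_2c_3=\beta_2c_5=c_0\kappa$, with $\kappa$ the common value of the cofactor coefficients $k_i$. Subtracting the $S$ and $R_E$ coefficient equations then yields $(r_1-r_2)c_1=0$, so $H$ is constant.
\item The degenerate subcase $L\equiv0$, $k\equiv0$ that you flag (and which the paper passes over) closes at once: the $S$-coefficient reads $\beta_1c_2=0$, and likewise $\beta_1c_4=\beta_1c_6=0$.
\end{itemize}
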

\begin{proof}
Since under the absence of treatment system \eqref{eq:new_model4} is polynomial, we look for a polynomial invariant surfaces, We begin with degree 1 surface of the form
\begin{equation}\label{eq:F_candidate}
    H_{2} = a_0 + a_1 S + a_2 Q + a_3 R_C + a_4 Q_C + a_5 R_E + a_6 Q_E, \quad a_i \in\mathbb{C},\quad i=0,1,\ldots6.
\end{equation}
Let $\mathcal{X}$ be the vector field associated to \eqref{eq:new_model4} at $C=E=D=0$. Then, $H_{2}$ is an invariant surface if
\begin{equation}\label{eq:Darboux_condition}
\mathcal{X}(F) = k F,
\end{equation}
where $k$ is the associated cofactor. Since we look for polynomial invariants of a polynomial dynamical system, we also assume that $k$ is a polynomial. Since $\deg \mathcal{X}$ is $2$, then the maximal degree of $k$ is 1.

Thus, the cofactor $k$ is necessarily of the form,
\begin{equation}\label{eq:cofactor_k}
    k = c_0 + c_1 S + c_2 Q + c_3 R_C + c_4 Q_C + c_5 R_E + c_6 Q_E,\quad c_{i} \in \mathbb{C},\,i=0,1,\ldots 6.
\end{equation}

By substituting \eqref{eq:F_candidate} and \eqref{eq:cofactor_k} into  \eqref{eq:Darboux_condition} and collecting coefficients at the monomials of the same degrees we obtain a system of algebraic equations.

We begin with homogeneous terms of degree 2. Let $F_1$ and $k_1$ denote the homogeneous parts of degree 1 of the polynomials $F$ and $k$, respectively. Extracting the terms of degree 2 from equation \eqref{eq:Darboux_condition} we get
\begin{equation}\label{eq:degree_2_expansion}
    -\frac{1}{K} \left( a_1 r_1 S + a_3 r_1 R_C + a_5 r_2 R_E \right) T_{1} = k_1 F_1.
\end{equation}
Since the polynomial ring over $\mathbb{C}[S, Q, R_C, Q_C, R_E, Q_E]$ is a unique factorization domain and $T_{1}$ is an irreducible polynomial, $T_{1}$ must strictly divide either $k_1$ or $F_1$. 

If $T$ divides $k_1$, then $k_1 = c T$ for some constant $c$. This would force $F_1$ to be proportional to $(a_1 r_1 S + a_3 r_1 R_C + a_5 r_2 R_E)$, implying that the coefficients for the quiescent compartments are zero ($a_2 = a_4 = a_6 = 0$). Furthermore, using this relation in equation \eqref{eq:degree_2_expansion} yields
\begin{equation}
a_1\left(c+\frac{r_1}{K}\right)=0,\quad  a_3\left(c+\frac{r_1}{K}\right)=0,\quad  a_5\left(c+\frac{r_2}{K}\right)=0.
\end{equation}
If $a_i=0$ for $i=1,3,5$ we get the trivial solution $F\equiv a_0$. Let then $r_1\neq r_2$ and $c=-\frac{r_1}{K}$, with $a_1,a_3\neq 0$ and $a_5=0$. Analyzing the terms of degree 1 of \eqref{eq:Darboux_condition} we get the relation
\begin{equation}
    \mathcal{X}_1(F) = c_0 F_1 + a_0 k_1,
\end{equation}
where $\mathcal{X}_1(F)$ represents the linear terms of the total derivative. Substituting $F_1 = a_1 S + a_3 R_C$ and $k_1 = -\frac{r_1}{K}T_{1}$, we get
\begin{align}
    a_1 (r_1 S - \beta_1 S + \beta_2 Q) + a_3 (r_1 R_C - \beta_1 R_C + \beta_2 Q_C) = c_0 (a_1 S + a_3 R_C) - a_0 \frac{r_1}{K} T_{1}.
\end{align}
Since the variable $Q_E$ does not appear on the left-hand side but is contained within $T_{1}$ on the right-hand side, we get $0 = -a_0 \frac{r_1}{K}$ and therefore $a_0=0$ (recall that $r_1, K>0$). Substituting $a_0 = 0$ back into the equation and equating the coefficients for the quiescent variables $Q$ and $Q_C$, we obtain
\begin{equation}
    a_1 \beta_2 = 0 \quad \text{and} \quad a_3 \beta_2 = 0.
\end{equation}
Since the transition rate is strictly positive ($\beta_2 > 0$), this implies $a_1 = 0$ and $a_3 = 0$, which contradicts our initial assumption and collapses the polynomial to the trivial solution $F \equiv 0$. The symmetric case for $c = -\frac{r_2}{K}$ (where $a_5 \neq 0$ and $a_1 = a_3 = 0$) analogously collapses to $F \equiv 0$.

Recall that $r_{2}\neq r_{1}$ and, hence, we do not consider this case.

As a result, under the case $T_{1}|k_1$ we get $a_i=0$ for $i=1,\dotsc,6$ as the only biologically relevant solution, that is, a trivial invariant surface.

Therefore, for a non-trivial generic solution, $T_{1}$ must divide $F_1$. Given that $F_1$ is also a polynomial of degree 1, they must be strictly proportional: $F_1 = a T_{1}$ for some constant $a \in \mathbb{R}$, that is,
\begin{equation}
    a_1 = a_2 = a_3 = a_4 = a_5 = a_6 \equiv a.
\end{equation}
Substituting $F_1 = a T_{1}$ back into \eqref{eq:degree_2_expansion} and cancelling the common factor $T_{1}$, we obtain the explicit form of the linear part of the cofactor:
\begin{equation}\label{eq:cofactor_k1}
    k_1 = -\frac{1}{K} \left( r_1 S + r_1 R_C + r_2 R_E \right).
\end{equation}

Now we deal with degree 1 terms. Having established that $F_1 = a T_{1}$, our candidate polynomial takes the form $F = a_0 + a T_{1}$. Consequently, the total derivative of $F$ along the vector field is:
\begin{equation}
    \dot{F} = a (\dot{S} + \dot{Q} + \dot{R}_C + \dot{Q}_C + \dot{R}_E + \dot{Q}_E).
\end{equation}
Substituting the equations of the untreated yields
\begin{equation}
    \dot{F} = a \left( r_1 S + r_1 R_C + r_2 R_E \right) \left( 1 - \frac{T_{1}}{K} \right).
\end{equation}
From this expression, the homogeneous part of degree 1 is strictly $a(r_1 S + r_1 R_C + r_2 R_E)$. 

On the right-hand side of the Darboux condition \eqref{eq:Darboux_condition}, the expansion of the product $k F = (c_0 + k_1)(a_0 + a T)$ yields the degree 1 terms $c_0 a T + a_0 k_1$. Equating the degree 1 polynomials from both sides gives the relation
\begin{equation}\label{eq:degree_1_general}
    a(r_1 S + r_1 R_C + r_2 R_E) = c_0 a T_{1} + a_0 k_1.
\end{equation}

We now demonstrate that a vanishing independent term ($a_0 = 0$) leads to the trivial solution. If $a_0 = 0$, equation \eqref{eq:degree_1_general} simplifies to 
\begin{equation}\label{eq:degree_1_a0_zero}
    a(r_1 S + r_1 R_C + r_2 R_E) = c_0 a (S + Q + R_C + Q_C + R_E + Q_E).
\end{equation}
By equating the coefficients of the specific variables on both sides, we form a linear system of constraints. For any of the quiescent variables, the coefficient on the left-hand side is $0$, while on the right-hand side it is $c_0 a$. Thus,
\begin{equation}
    c_0 a = 0.
\end{equation}
For any of the corresponding proliferative variables, equating the coefficients yields
\begin{equation}
    a r_1 = c_0 a.
\end{equation}
Substituting the constraint $c_0 a = 0$ into the equation results in $a r_1 = 0$. Since $r_1 > 0$, the only biologically valid solution is
\begin{equation}
    a = 0.
\end{equation}
Therefore, if $a_0 = 0$, the polynomial collapses into the trivial solution $F = 0$. This confirms that no partial or purely homogeneous linear invariant surfaces can exist within this dynamical system.

Finally, we construct the non-trivial invariant surface of \eqref{eq:new_model4} at $C=E=D=0$. We now analyse the case where the independent term does not vanish ($a_0 \neq 0$). We return to the degree 1 relation established in \eqref{eq:degree_1_general}
\begin{equation}
    a(r_1 S + r_1 R_C + r_2 R_E) = c_0 a T_{1} + a_0 k_1.
\end{equation}
Substituting the explicit expression for the linear cofactor $k_1$ derived in \eqref{eq:cofactor_k1}, the equation becomes
\begin{equation}
    a(r_1 S + r_1 R_C + r_2 R_E) = c_0 a T_{1} - \frac{a_0}{K}(r_1 S + r_1 R_C + r_2 R_E).
\end{equation}
Grouping the terms corresponding to the proliferative variables on the left side, we obtain
\begin{equation}\label{eq:step3_grouped}
    \left( a + \frac{a_0}{K} \right) (r_1 S + r_1 R_C + r_2 R_E) = c_0 a T_{1}.
\end{equation}
Recall that the total tumor population is $T_{1} = S + Q + R_C + Q_C + R_E + Q_E$. The left-hand side of equation \eqref{eq:step3_grouped} depends solely on the proliferative variables and does not contain quiescent terms. However, the right-hand side contains the quiescent variables multiplied by the constant $c_0 a$. 

By equating the coefficients for any quiescent variable, we find:
\begin{equation}
    0 = c_0 a.
\end{equation}
Since we are seeking a non-trivial polynomial ($F \neq 0$), we must have 
\begin{equation}
    c_0 = 0.
\end{equation}
Substituting $c_0 = 0$ into \eqref{eq:step3_grouped} yields
\begin{equation}
    \left( a + \frac{a_0}{K} \right) (r_1 S + r_1 R_C + r_2 R_E) = 0.
\end{equation}
Since the growth rates are strictly positive ($r_1, r_2 > 0$) and the variables are not identically zero, the bracketed term must vanish
\begin{equation}
    a + \frac{a_0}{K} = 0 \implies a_0 = -a K.
\end{equation}
Finally, substituting this relationship into our homogeneous structure $F = a_0 + a T_{1}$ we get
\begin{equation}
    F = -a K + a T_{1} = a (T_{1} - K).
\end{equation}
As a result, up to re-scaling, we find the only invariant surface of degree one,
\begin{equation}
    F = T_{1} - K = 0.
\end{equation}
This completes the proof.

\end{proof}

Now we demonstrate that there is submanifold of $H_{1}$ which consists of equilibria of \eqref{eq:new_model4} at $C=E=D=0$ and it is locally asymptotically stable.

\begin{Proposition}\label{pr:inv_surf}
    Under the absence of therapies ($C=E=D=0$), system \eqref{eq:new_model4} admits a stable invariant surface given by   $H_{2}=\{ (S,Q,R_{C},Q_{C},R_{E},Q_{E}):  S+Q+R_{C}+Q_{C}+R_{E}+Q_{E}-K=0,\quad   
    Q-\eta S=0, \quad Q_C-\eta R_C=0,   \quad  Q_E -\eta R_E=0 \}. $
\end{Proposition}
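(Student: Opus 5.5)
The plan is to show that $H_2$ is a two-dimensional manifold consisting entirely of equilibria of the untreated system, which makes its invariance immediate, and then to show that it is normally attracting. We first identify $\eta$. With $C=E=D=0$, the $Q$-equation gives $\dot Q=\beta_1S-\beta_2Q$, so a steady state in each proliferative--quiescent pair forces $Q=\eta S$, $Q_C=\eta R_C$, $Q_E=\eta R_E$ with $\eta=\beta_1/\beta_2$ (recall $\beta_2>0$). Substituting these relations into the $S$, $R_C$, $R_E$ equations, the exchange terms cancel and we are left with $r_1S(1-T_1/K)$ and its analogues. These vanish exactly on $T_1=K$, which is the invariant hyperplane of Proposition~\ref{prop:unique_linear_invariant}.

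Hence every point of $H_2$ is an equilibrium, and $H_2$ is trivially invariant. Since $H_2$ is cut out of $\mathbb{R}^6$ by four independent linear equations, it is an affine $2$-plane. Its biologically relevant part is the triangle $S,R_C,R_E\ge 0$ with $(1+\eta)(S+R_C+R_E)=K$.

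For stability we linearize at an arbitrary point of $H_2$. Because $T_1=K$ there, the logistic factors vanish, and the Jacobian has the structure
\[
J=L-\frac{1}{K}\,p\,\mathbf{1}^{\top},\qquad p=(r_1S,0,r_1R_C,0,r_2R_E,0)^{\top}.
\]
Here $L=\mathrm{diag}(M,M,M)$ with $M=\begin{pmatrix}-\beta_1&\beta_2\\ \beta_1&-\beta_2\end{pmatrix}$, and $\mathbf{1}$ is the all-ones vector. The key observation is that $\mathbf{1}^{\top}M=0$, so that $\mathbf{1}^{\top}(\lambda I-L)^{-1}=\lambda^{-1}\mathbf{1}^{\top}$. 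The matrix determinant lemma then gives
\[
\det(\lambda I-J)=\lambda^{3}(\lambda+\beta_1+\beta_2)^{3}\Bigl(1+\frac{\mathbf{1}^{\top}p}{K\lambda}\Bigr)=\lambda^{2}(\lambda+\beta_1+\beta_2)^{3}\Bigl(\lambda+\frac{P}{K}\Bigr),
\]
where $P=r_1S+r_1R_C+r_2R_E$. The two zero eigenvalues correspond to the tangent directions of the $2$-plane of equilibria. We will check this by verifying that the tangent space of $H_2$ lies in $\ker J$: for tangent vectors $v$ we have $\mathbf{1}^{\top}v=0$ and $v\in\ker L$.

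All transverse eigenvalues, namely $-(\beta_1+\beta_2)$ (three times) and $-P/K$, are strictly negative whenever $P>0$. This holds at every biologically relevant point of $H_2$, since there $S+R_C+R_E=K/(1+\eta)>0$.

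The main obstacle is turning this spectral information into a stability statement for the manifold. Each individual equilibrium is only Lyapunov stable, not asymptotically stable, because of the zero eigenvalues. We will therefore invoke normal hyperbolicity of a manifold of equilibria (the Fenichel / center-manifold reduction). Since the center manifold at each point coincides with $H_2$ itself, this yields the following: nearby trajectories converge exponentially to some point of $H_2$, and $H_2$ is locally asymptotically stable as a set. If a more elementary route is preferred, we will instead combine two facts. First, $\dot T_1=P(1-T_1/K)$ drives $T_1\to K$ monotonically. Second, on the invariant hyperplane $T_1=K$ (Proposition~\ref{prop:unique_linear_invariant}) the dynamics are linear, $\dot x=Lx$. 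These dynamics relax each pair to $Q=\eta S$ at rate $\beta_1+\beta_2$, which gives convergence to $H_2$ directly.
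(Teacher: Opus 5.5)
Your proof is correct and has the same skeleton as the paper's: invariance, linearization along $H_2$, a kernel equal to $T_pH_2$, a negative transverse spectrum, and then normal hyperbolicity in the sense of Fenichel. It differs in three respects.

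\textbf{Spectral computation.} The paper only lists the eigenvalues of its $6\times6$ block. It writes the transverse one as $\lambda_6=-\frac{\beta_2r_2}{\beta_1+\beta_2}+\frac{(r_2-r_1)(S+R_C)}{K}$, which forces a case split on $r_2\le r_1$ versus $r_2>r_1$. Your decomposition $J=L-\frac{1}{K}p\mathbf{1}^{\top}$ with $\mathbf{1}^{\top}L=0$ actually derives the characteristic polynomial. It gives the same eigenvalue as $-P/K$ (the two agree once $S+R_C+R_E=\beta_2K/(\beta_1+\beta_2)$ is used), and its sign is immediate. Your remark that $H_2$ consists of equilibria also replaces the paper's ``one can verify'' for invariance.

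\textbf{Scope.} The paper's proof works in the full nine-dimensional phase space. At $C=E=D=0$ the $D$, $C$, $E$ rows vanish in the tumor columns, so the Jacobian is block upper triangular. This contributes three further eigenvalues: $-\tau$, $-\mu$, and the CAR-T rate $\lambda_8=-\rho_1+\frac{\rho_2S}{g_1+S}+\frac{\rho_3R_E}{g_2+R_E}-\frac{\rho_4K}{g_3}$. The last is negative only because of the ranges in Table~\ref{tab:model_parameters}, where $\rho_4K/g_3\ge 25>18\ge\rho_2+\rho_3$. Your argument proves attraction within the invariant subspace $\{C=E=D=0\}$, which is a legitimate reading of ``under the absence of therapies''. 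To get the paper's stronger conclusion, namely robustness to small perturbations in $D$, $C$, $E$, you must add this block and its parameter condition. The kernel is unchanged, since the lower $3\times3$ block is then invertible.

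\textbf{Elementary route.} Your alternative argument is worth completing, because it would give global convergence for all nonnegative data with $T_1(0)>0$, which the local Fenichel argument does not. As sketched, it has two gaps:
\begin{itemize}
\item You need a positive lower bound on $P$ along orbits so that $1-T_1/K$ decays exponentially. For $T_1\le K$, for instance, $X_P=S+R_C+R_E$ satisfies $\dot X_P\ge-\beta_1X_P+\beta_2(T_1-X_P)$.
\item The linear dynamics on the hyperplane $T_1=K$ do not by themselves control orbits that only approach it. You can close this by noting that each pair sum, e.g.\ $S+Q$ with derivative $r_1S(1-T_1/K)$, has an integrable derivative. Each proliferative fraction then obeys the contracting scalar equation \eqref{eq:appendix_U_general} with a vanishing perturbation.
\end{itemize}
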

\begin{proof}
Let $C=D=E=0$ and $\eta=\beta_1/\beta_2$. Then, one can verify that the set defined by
     \begin{multline}
     \label{eq:inv_surf}
   H_{2}=\{ (S,Q,R_{C},Q_{C},R_{E},Q_{E}):  \\ S+Q+R_{C}+Q_{C}+R_{E}+Q_{E}-K=0,  \\
    Q-\eta S=0, \quad Q_C-\eta R_C=0,   \quad  Q_E -\eta R_E=0 \}
     \end{multline}
is invariant with respect to the flow of \eqref{eq:new_model4} when $E=C=D=0$.  The dimension of $H_{2}$ is 2. 

The Jacobian matrix of \eqref{eq:new_model4} on $H_{2}$ is a block upper-triangular one and its structure is
\begin{equation}
J =
\left(
\begin{array}{cc}
A_{6,6} & B_{6,3} \\ 
0 & C_{3,3}
\end{array}
\right),
\end{equation}
where $A_{6,6}$ and $B_{6,3}$ are $6\times6$ and $6\times3$ matrices and $C_{3,3}$ is a $3\times3$ upper triangular matrix. Therefore, we have that
\begin{equation}
    \det (J-\lambda I)=\det(A_{6,6}-\lambda I)\det(C_{3,3}-\lambda I).
\end{equation}
The eigenvalues that correspond to the last three equations of \eqref{eq:new_model4} are on the diagonal of $C_{3,3}$ and have the form
    \begin{align}
    \label{eq:ev_7_9}
    &\lambda_{7}=-\tau,\nonumber\\ 
    &\lambda_{8}=-\rho_1+\rho_{2}+\rho_3-\rho_4\dfrac{K}{g_3}-\frac{g_{1}\rho_{2}}{S+g_{1}}-\frac{g_{2}\rho_{3}(\beta_1+\beta_2)}{\beta_2K-(\beta_1+\beta_2)(S+R_C-g_2)}, \\ 
    &\lambda_{9}=-\mu.\nonumber
        \end{align}
Notice that below we demonstrate that the denominator in $\lambda_{8}$ does not vanish for biologically relevant values of the parameters.
Clearly, $\lambda_{7}$ and $\lambda_{9}$ are negative for biologically relevant values of the parameters. In fact, the same holds for $\lambda_{8}$ since using $S+R_C=K/(1+\eta)-R_E=\beta_2K/(\beta_1+\beta_2)-R_E$, we get
\begin{align}
    \lambda_8&<-\rho_1-\rho_4\frac{K}{g_3}+\rho_{2}+\rho_{3}-\frac{g_{2}\rho_{3}}{R_{E}+g_{2}}<-\rho_1-\rho_4\frac{K}{g_3}+\rho_2+\rho_{3}<0,
\end{align}
The last inequality holds for the ranges of the parameters presented in Table \ref{tab:model_parameters}. One can also see that the denominator in $\lambda_{8}$ is positive for positive values of the parameters.

The rest of eigenvalues of $J$ are eigenvalues of $A_{6,6}$ and have the form
    \begin{equation}
    \begin{gathered}
        \lambda_{1,2}=0,  \quad \lambda_{3,4,5}=-(\beta_1+\beta_2),\quad  \lambda_{6}=-\frac{\beta_2r_2}{\beta_1+\beta_2}+\dfrac{(r_2-r_1)(S+R_C)}{K}.
    \end{gathered}
    \end{equation}

Notice that two first eigenvalues are zero. Clearly, the next three ones are negative. One can also see that $\lambda_6<0$ if $r_2\leq r_1$. If $r_2>r_1$, using $S+R_C=K/(1+\eta)-R_E=\beta_2K/(\beta_1+\beta_2)-R_E$ we have
\begin{align}
    \lambda_6&=\dfrac{\beta_2}{\beta_1+\beta_2}\left(-r_2+(r_2-r_1)\right)-\frac{r_2-r_1}{K}R_C=-\left(\dfrac{r_1}{1+\eta}+\dfrac{(r_2-r_1)R_E}{K}\right)<0.
\end{align}

Thus, we have that at $H_{2}$ all eigenvalues of the Jacobian matrix for \eqref{eq:new_model4} are negative except for two zero eigenvalues. 

Now we demonstrate that eigenspace of these two zero eigenvalues of $A_{6,6}$ coincides with the tangent space of $H_{2}$. Let us introduce the vector space $\mathcal{V}=\{\emph{v}=(\emph{v}_{S},v_{Q},\emph{v}_{R_{C}},\emph{v}_{Q_{C}},\emph{v}_{R_{E}},\emph{v}_{Q_{E}})\subseteq\mathbb{R}^{6}\}$. Then, one can see that
\begin{multline}
\label{eq:TpH1}
    T_{p}H_{2}=\{ v\in\mathbb{R}^{6}:v_{S}+v_{Q}+v_{R_{C}}+v_{Q_{C}}+v_{R_{E}}+v_{Q_{E}}=0, \\ v_{Q}-\eta v_{S}=0, \quad v_{Q_{C}}-\eta v_{R_{C}}=0, \quad v_{Q_{E}}-\eta v_{R_{E}}=0 \}.
\end{multline}
It is straightforward to verify that any vector $\emph{v}\in\mathcal{V}$ of the form \eqref{eq:TpH1} satisfies $A_{6,6}\emph{v}=0$. Thus, for any $\emph{v}\in T_{p}H_{2}$ we have that $A_{6,6}v=0$. Conversely, suppose that $A_{6,6}\emph{v}=0$. Then, from the second, fourth and sixth equations of this algebraic system we obtain the last three conditions from \eqref{eq:TpH1}. Substituting them to the rest of the equations we find that they are reduced to the first condition from \eqref{eq:TpH1}. Therefore, we have that $\ker A_{6,6}=T_pH_{2}$ and the geometric multiplicity of $\lambda_{1,2}$ is $2$. 

As a consequence, we obtain that two zero eigenvalues $\lambda_{1,2}$ correspond to the directions tangent to $H_{2}$. The rest of the eigenvalues are negative. Therefore, according to normal hyperbolicity theorem (see, e.g. \cite{Fenichel1971,Tallapragada2017} ) the invariant surface $H_{2}$ is locally exponentially attracting.

\end{proof}

One can also verify that all nontrivial untreated equilibria with positive
tumor burden lie on the invariant surface $H_1$. Thus, taking into account Propositions \ref{pr:origin_unstable} and \ref{pr:inv_surf} it is clear that in the absence of treatments the tumor free state is not reachable.

Now we consider the opposite situation, when we constantly apply all three treatments (RT, TMZ and CAR-T). The mathematical model that describes this scenario is developed in Section \ref{sec:app_1}. Our aim now is to demonstrate that a cancer-free equilibrium, i.e., a point of the form $P=(0,\dots,0,C^*,E^*)$, exists for a wide range of biologically relevant values of the parameters.

\begin{Proposition}
System \eqref{eq:new_model_const_treatment} with constant treatment admits a locally asymptotically stable tumor-free equilibrium of the form $P=(0,0,\ldots,C^*,E^*)$, where 
    \begin{equation}\label{eq:E^*,C^*}
    E^*= \bar{E}_0/\mu,\qquad
    C^*=\frac{\bar{C}_0}{(\rho_1+\alpha_6)+\alpha_3\bar{E}_0/\mu},
    \end{equation}
    if and only if
    \begin{equation}\label{eq:E_0> and C_0>}
    (\alpha_1+\varepsilon_1) \bar{E}_0 > \mu\,(r_1-\alpha_4),
    \bar{C}_0 > \big((\rho_1+\alpha_6)+\tfrac{\alpha_3}{\mu}\bar{E}_0\big)\,\frac{r_2-\alpha_5}{\alpha_2}.
    \end{equation}
\end{Proposition}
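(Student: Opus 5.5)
The plan is to linearize the constant-treatment system \eqref{eq:new_model_const_treatment} at the tumor-free point and exploit its triangular structure, as in the proofs of Propositions~\ref{pr:origin_unstable} and~\ref{pr:inv_surf}. I assume (to be checked against Section~\ref{sec:app_1}) that the constant-treatment model is \eqref{eq:new_model4} with four modifications:
\begin{itemize}
\item a constant TMZ input $\bar E_0$ in the $E$-equation;
\item a constant CAR-T input $\bar C_0$ in the $C$-equation;
\item continuous RT-induced losses $-\alpha_4 S$, $-\alpha_4 R_C$ and $-\alpha_5 R_E$ on the proliferative compartments (possibly with corresponding gains in $D$);
\item an RT loss $-\alpha_6 C$ for CAR-T cells.
\end{itemize}

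\emph{Existence of $P$.} Setting all seven tumor variables to zero, the $D$-equation is satisfied identically. All tumor-driven terms in $\dot C$ vanish, including $\rho_4 TC/(g_3+C)$ because $T=0$. What remains is $\dot E=\bar E_0-\mu E$ and $\dot C=\bar C_0-(\rho_1+\alpha_6)C-\alpha_3EC$. Solving these two linear equations gives exactly \eqref{eq:E^*,C^*}, and both values are positive.

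\emph{Linearization.} I order the variables as $(S,Q,R_C,Q_C,R_E,Q_E,D,C,E)$. At $P$, every partial derivative of $\dot C$ and $\dot E$ with respect to tumor variables multiplies either $C^*$ or $E^*$ only through terms like $\rho_2 S C/(g_1+S)$, which do enter the $C$-row. However, the $S,\dots,D$ rows have no $C$ or $E$ derivatives at $P$, since each such derivative carries a tumor factor. Hence the Jacobian is block triangular, and its spectrum splits as follows.
\begin{itemize}
\item From $(C,E)$: the eigenvalues are $-\mu$ and $-(\rho_1+\alpha_6)-\alpha_3E^*$, both negative. The dependence of $\dot C$ on $E$ sits in a triangular position.
\item From $D$: the eigenvalue is $-\tau$.
\item From the tumor block: the $R_E$-pair receives the input $\epsilon_1(S+R_C)E^*$ from the TMZ-sensitive pairs but feeds nothing back. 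This block is therefore itself lower block triangular with three $2\times2$ diagonal blocks.
\end{itemize}
Each of the three $2\times2$ blocks has the form
\begin{equation*}
M=\begin{pmatrix} a-\beta_1 & \beta_2\\ \beta_1 & -\beta_2\end{pmatrix},
\end{equation*}
with the following values of $a$:
\begin{itemize}
\item for $(S,Q)$: $a_S=r_1-\alpha_4-(\alpha_1+\epsilon_1)E^*-\alpha_2C^*$;
\item for $(R_C,Q_C)$: $a_C=r_1-\alpha_4-(\alpha_1+\epsilon_1)E^*$;
\item for $(R_E,Q_E)$: $a_E=r_2-\alpha_5-\alpha_2C^*$.
\end{itemize}

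\emph{Reduction to the sign of $a$.} For $M$ we have $\det M=-\beta_2 a$ and $\operatorname{tr}M=a-\beta_1-\beta_2$. Since $\beta_2>0$, both eigenvalues have negative real part exactly when $a<0$. If $a>0$, the determinant is negative, so there is a positive real eigenvalue. If $a=0$, there is a zero eigenvalue. The three blocks then translate as follows.
\begin{itemize}
\item $a_C<0$ is precisely the first inequality in \eqref{eq:E_0> and C_0>}, after substituting $E^*=\bar E_0/\mu$.
\item $a_S<a_C$ because $\alpha_2C^*>0$, so $a_S<0$ follows from $a_C<0$ and adds no new condition.
\item $a_E<0$ reads $\alpha_2C^*>r_2-\alpha_5$. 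Substituting $C^*$ from \eqref{eq:E^*,C^*} gives the second inequality.
\end{itemize}
Sufficiency then follows from the linearization theorem, since all nine eigenvalues have negative real part.

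\emph{Main obstacle: the ``only if'' direction.} If either inequality fails strictly, an eigenvalue is positive and Hartman--Grobman gives instability. The borderline case of equality yields a zero eigenvalue, where linearization is inconclusive. There I would show instability or non-attraction directly. For the first inequality, I would restrict to the invariant subspace $S=Q=R_E=Q_E=0$ near $P$ and use the quantity $\beta_1R_C+\beta_2 Q_C$ (the left eigenvector of $M$ for $\lambda=0$) as a Lyapunov-type function. Its derivative is governed by the quadratic logistic term $-r_1R_CT/K$ and by the perturbation of $E$, and I would examine its sign. If this becomes delicate, I would state the ``only if'' in the hyperbolic (linear-stability) sense, which is the sense used throughout the section. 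I would also verify from Section~\ref{sec:app_1} that no RT term couples $C$ or $E$ back into the tumor rows at $P$, since the triangularity argument depends on that.
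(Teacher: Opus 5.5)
Your argument is essentially the paper's proof: the paper also linearizes at $P$, factors the characteristic polynomial into $(\tau+\lambda)(\mu+\lambda)(\rho_1+\alpha_6+\alpha_3E^*+\lambda)$ times three quadratics from the $(S,Q)$, $(R_E,Q_E)$ and $(R_C,Q_C)$ blocks, and applies Routh--Hurwitz, which is equivalent to your trace/determinant test. It likewise notes that the $R_C$ condition implies the $S$ condition; the Appendix~\ref{sec:app_1} system also contains the recruitment term $\gamma_1 Q$, but this only replaces $\beta_2$ by $\beta_2+\gamma_1$ in each block and changes nothing. The paper reads ``if and only if'' purely in this linear (hyperbolic) sense and never treats the equality cases, so your fallback is the right choice. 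Your borderline sketch would not work as written: $\{S=Q=R_E=Q_E=0\}$ is not invariant because of the influx $\epsilon_1 R_C E$ into $R_E$, and the left null vector of the block is $(1,1)$ rather than $(\beta_1,\beta_2)$. Moreover, at equality $X=R_C+Q_C$ satisfies $\dot X=R_C\,[(\alpha_1+\epsilon_1)(E^*-E)-r_1T/K]$, which makes $P$ attracting, not unstable, for nonnegative perturbations.
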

\begin{proof}
Consider the tumor-free equilibrium point $P=(0,\dots,0,C^*,E^*)$ for system \eqref{eq:new_model_const_treatment}. Substituting $P$ into \eqref{eq:new_model_const_treatment} we find that
\begin{equation}
E^*= \bar{E}_0/\mu,\quad C^*=\frac{\bar{C}_0}{(\rho_1+\alpha_6)+\alpha_3\bar{E}_0/\mu}.
\end{equation}

The characteristic polynomial of the Jacobian matrix of \eqref{eq:new_model_const_treatment} at $P$ is
\begin{multline}
\label{eq:charat_polyn}
 (\tau +\lambda)(\mu+\lambda)(\alpha_3E^* +(\rho_1+\alpha_6) + \lambda)\left[(a-\lambda)\left(\beta_2+\gamma_1+\lambda\right)+(\beta_2+\gamma_1)\beta_1\right] \times
\\  \left[(b-\lambda)(\beta_2+\gamma_1+\lambda)+(\beta_2+\gamma_1)\beta_1\right] \left[(a+\alpha_2C^*-\lambda)(-\beta_2-\gamma_1-\lambda)-\beta_1(\beta_2+\gamma_1)\right]=0,
\end{multline}
where 
\begin{align}\label{eq:expressions_for_a_b}
a = r_1-\beta_1-\alpha_4-(\alpha_1+\varepsilon_1)E^*-\alpha_2 C^*, \quad
b = r_2-\beta_1-\alpha_5-\alpha_2 C^*.
\end{align}
One can see that the first three eigenvalues are given by
\begin{align}
   \lambda_1 = -\tau, &\quad 
\lambda_2 = -\mu, \nonumber\\
\lambda_3 = -(\rho_1+\alpha_6) - \alpha_3 E^*&= -(\rho_1+\alpha_6) - \alpha_3 \bar{E}_0/\mu, 
\end{align}
which are always negative for the biologically relevant values of the parameters. 

Now, we focus on the three quadratic equations in square brackets (see \eqref{eq:charat_polyn}). We need to find conditions when real parts of the solutions of these quadratic equations are negative. To this aim we use the Routh-Hurwitz criterion,  which requires that all coefficients of these quadratic equations to be positive. 

From the first quadratic with respect to $\lambda$ factor of \eqref{eq:charat_polyn} we get
\begin{equation}
    \lambda^2 + (\beta_2 + \gamma_1 - a)\lambda - (\beta_2 + \gamma_1)(\beta_1 + a) = 0.
\end{equation}
Applying the Routh-Hurwitz criterion, we find the first stability condition
\begin{equation}\label{eq:cond1}
a < -\beta_1.
\end{equation}

The second quadratic with respect to $\lambda$ factor of \eqref{eq:charat_polyn} is
\begin{equation}
\lambda^2 + (\beta_2 + \gamma_1 - b)\lambda - (\beta_2 + \gamma_1)(b + \beta_1) = 0.
\end{equation}
Applying the Routh--Hurwitz conditions as before, we obtain
\begin{equation}\label{eq:cond2}
b < -\beta_1.
\end{equation}
The last factor of \eqref{eq:charat_polyn}  yields
\begin{equation}
\lambda^2 + [(\beta_2 + \gamma_1) - (a + \alpha_2 C^*)]\lambda - (\beta_2 + \gamma_1)(\beta_1 + a + \alpha_2 C^*) = 0,
\end{equation}
The Routh--Hurwitz conditions give
\begin{equation}\label{eq:cond3}
a + \alpha_2 C^* < -\beta_1.
\end{equation}
Therefore, since \eqref{eq:cond3} implies \eqref{eq:cond1}, we get the following two conditions in terms of $a$ and $b$,
\begin{equation}\label{eq:final_stability_cond}
    b<-\beta_1,\qquad a+\alpha_2C^*<-\beta_1.
\end{equation}
Taking into account \eqref{eq:expressions_for_a_b}, we obtain
\begin{equation}
    r_2<\alpha_5+\alpha_2C^*,\qquad r_1<\alpha_4+(\alpha_1+\epsilon_1)E^*.
\end{equation}
Finally, using \eqref{eq:E^*,C^*}, we obtain that the tumor-free equilibrium is locally asymptotically stable if and only if
\begin{equation}
\label{eq:tumor_free_eq}
    (\alpha_1+\epsilon_1)\bar{E}_0>\mu( r_1-\alpha_4),\qquad \alpha_{2}\bar{C}_0>\Big(\rho_1+\alpha_6+\dfrac{\alpha_3}{\mu}\bar{E}_0\Big )(r_2-\alpha_5).
\end{equation}
As a final remark, notice that if $r_1\leq\alpha_4$ or $r_2\leq\alpha_5$, the corresponding inequality is automatically satisfied. This is a result of RT successfully controlling the tumor through terms $\alpha_4$ and $\alpha_5$. Furthermore, replacing $\bar{E}_0$ in the second condition, we have a bound involving solely the CAR-T,
\begin{equation}
    \bar{C}_0>\left((\rho_1+\alpha_6)+\alpha_3\dfrac{r_1-\alpha_4}{\alpha_1+\epsilon_1}\right)\dfrac{r_2-\alpha_5}{\alpha_2}.
\end{equation}
This completes the proof.
\end{proof}

Therefore, we see that in the case of constant treatment there exists critical values of TMZ and CAR-T dosages that guarantee local stability of the tumor-free equilibrium.  Indeed, regulating 
CAR-T and TMZ dosages ($\bar{E}_{0}$ and $\bar{C}_{0}$), one can satisfy inequalities \eqref{eq:tumor_free_eq}. On the other hand, the satbility of this tumor-free equilibrium can be also regulated by the  efficacy of CAR-T, TMZ and radio therapies (the corresponding parameters are $\alpha_{1}$, $\alpha_{2}$, $\alpha_{4}$ and $\alpha_{5}$).

\subsection{Numerical simulations} \label{sec:num_simulations}

We now investigate clinical scenarios with an impulsive-treatment formulation of the model through \textit{in silico} trials. We first assess the clinical plausibility of the model by benchmarking its predictions against published outcomes for untreated disease and for TMZ, RT, RT+TMZ, and CAR-T therapies. Here, we generate virtual cohorts and perform \textit{in silico} trials reproducing the clinical trials with the highest possible fidelity (at least same number of patients, initial tumor sizes, treatment scheduling and doses). Next, the numerical study is organized around a central question: whether the incorporation of CAR-T cell therapy into standard chemoradiotherapy treatment, the Stupp protocol \cite{Stupp2005}, provides a meaningful therapeutic benefit in a heterogeneous population of VPs, and whether such a benefit is better understood at the population level or through patient-specific treatment response. Here, we use a large heterogeneous virtual cohort to quantify the population-level effect of incorporating CAR-T cells into the Stupp protocol and to determine whether treatment order, dose fractionation, and temporal spacing substantially modify this effect. Finally, we investigate the heterogeneity of individual therapeutic benefit, identify model parameters associated with response to the addition of CAR-T cells to Stupp protocol, and evaluate the theoretical gain achievable through patient-specific protocol selection. 

With the exception of the benchmarking analysis, the exploratory treatment comparisons are performed using the same cohort of $N=10000$ VPs. Thus, the biological parameters and initial conditions of each VP remain unchanged across protocols, and only the treatment schedule is modified. By contrast, when reproducing a specific clinical study, a separate virtual cohort is generated with a sample size matching that of the corresponding clinical cohort.

\subsubsection{Clinical benchmarking}

Here, we assess whether our new model reproduces clinically plausible survival times in established therapeutic settings. We consider four complementary scenarios: the natural evolution of MGs under no treatment (NT), TMZ monotherapy, RT alone and its combination with TMZ according to the Stupp protocol, and CAR-T monotherapy. These comparisons, summarized in Table \ref{tab:clinical_benchmarking}
, correspond to different clinical trials and patients and should therefore be regarded as complementary benchmarking exercises rather than as a single external validation cohort. 

Overall, these complementary comparisons show that the model reproduces clinically plausible survival timescales across untreated disease, chemotherapy, radiotherapy, chemoradiotherapy, and CAR-T therapy. These results should not be interpreted as a complete validation of all model mechanisms or parameters. Rather, they establish a clinically plausible reference framework from which the exploratory comparison of alternative RT-TMZ-CAR-T treatment combinations can be performed.

\paragraph{Untreated disease.}

As an initial clinical plausibility check, we considered the natural evolution of untreated macroscopic disease. To represent a tumor that has not undergone surgical debulking, the initial tumor burden was sampled from $T_0\in[10^{10},10^{11}]$ cells, while the remaining model parameters were generated according to the sampling rules described in Section \ref{sec:methods_vp_insilico_trials} and Table \ref{tab:model_parameters}. For a cohort of $N=10000$ VPs, the model predicted a median survival time of $3.52$ months (95\% CI: $3.43$--$3.60$ months), consistent with the short survival times reported for untreated glioblastoma \cite{Feucht}.

For comparison, reducing the initial tumor burden to $T_0 \in [10^7, 10^9]$ cells to mimic postoperative residual disease extends the median survival time to $8.76$ months (95\% CI: $8.57$--$8.97$) in the absence of adjuvant therapy. The difference between these two scenarios effectively quantifies the survival benefit solely attributable to surgical resection.

\paragraph{TMZ monotherapy.}

We next benchmarked the TMZ component of the model against two clinical studies using TMZ monotherapy. The randomized NOA-08 trial \cite{Wick2012} included $N=195$ patients in the TMZ arm and used a dose-dense schedule consisting of TMZ administration for seven consecutive days followed by seven days without treatment. The reported median OS was $8.6$ months (95\% CI: $7.3$--$10.2$ months). Using the same cohort size and reproducing the corresponding TMZ schedule, the model predicted a median OS of $7.1$ months (95\% CI: $5.5$--$11.2$ months).

As an additional independent comparison, we considered the study by Glantz et al.\ \cite{Glantz2003}, in which $N=32$ patients received TMZ monotherapy using five consecutive treatment days within 28-day cycles. The clinically reported median OS was $6.0$ months, whereas the model predicted $7.0$ months (95\% CI: $5.4$--$11.1$ months). Taken together, these two comparisons indicate that the model reproduces the approximate survival timescale observed under TMZ monotherapy across two distinct treatment schedules and clinical cohorts.

\paragraph{RT and Stupp protocol.}

The randomized trial by Stupp et al.\ \cite{Stupp2005} provides the main clinical benchmark for RT and combined RT+TMZ treatment. In the original study, $286$ patients received RT alone and $287$ patients received RT with concomitant and adjuvant TMZ, with reported median OS times of $12.1$ and $14.6$ months, respectively. Using virtual cohorts with the same sample sizes, the model predicted median survival times of $12.21$ months (95\% CI: $11.05$--$13.74$ months) for RT alone and $14.60$ months (95\% CI: $12.52$--$16.78$ months) for the Stupp protocol. Thus, the model reproduces not only the survival timescale associated with both treatments but also the incremental survival benefit produced by adding TMZ to RT. The corresponding simulated KM curve in Fig.\ref{fig:stupp_trial} further reproduces the separation between the two treatment groups.

\begin{figure}[ht!]
    \centering
\includegraphics[width=0.4\textwidth]{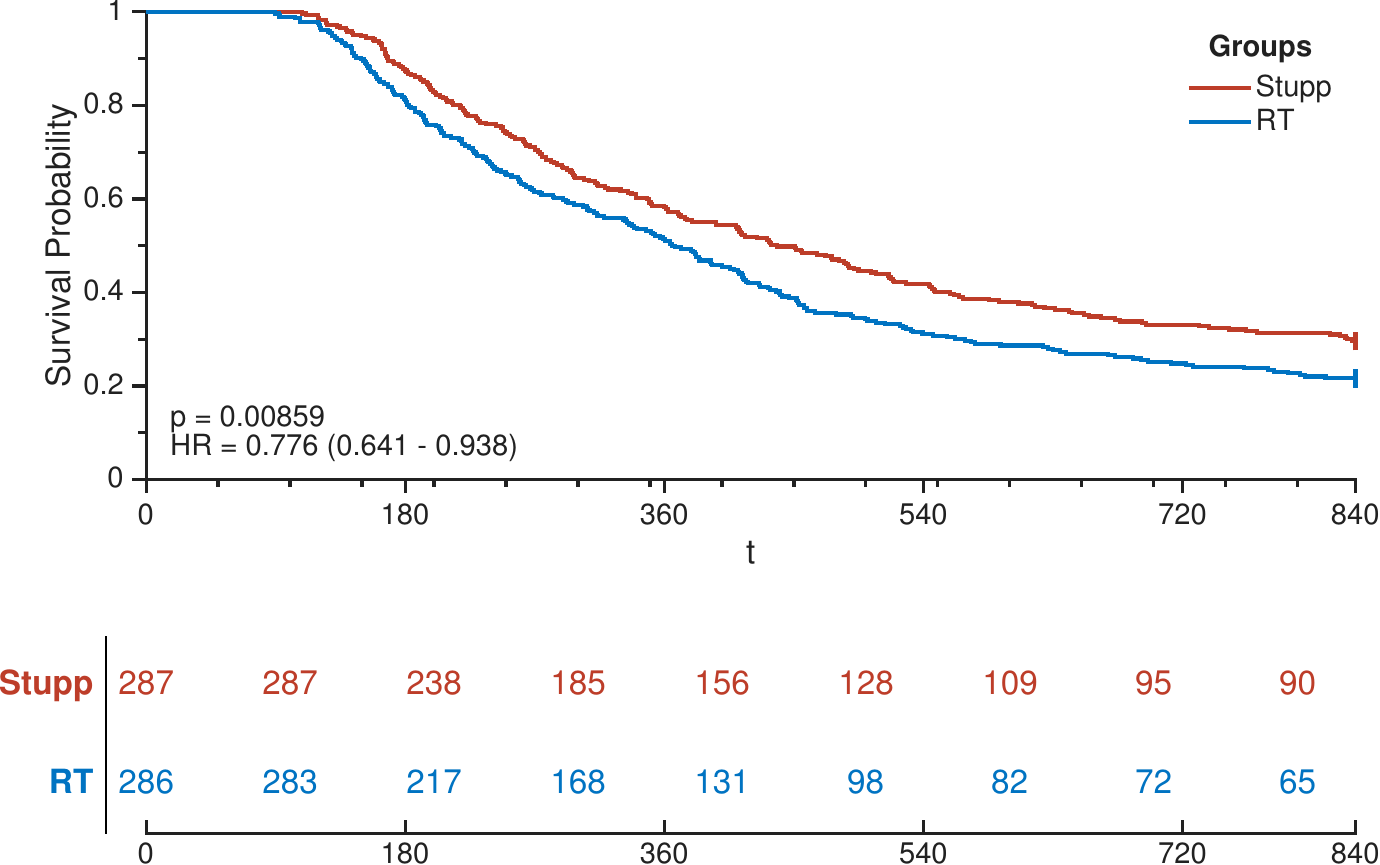}
\caption{Kaplan-Meier survival curves for the study-matched virtual
cohorts receiving radiotherapy alone ($N=286$) or the Stupp
protocol ($N=287$).}
    \label{fig:stupp_trial}
\end{figure}

\paragraph{CAR-T monotherapy.}

Finally, we compare the CAR-T component of the model with the phase~I study by Brown et al.\ \cite{Brown2024}. The clinical analysis included $41$ patients with recurrent glioblastoma evaluable for survival, for whom a median OS of $7.7$ months (95\% CI: $6.0$--$10.1$ months) was reported. Among this $41$ patients two different types of CAR-T cells were used. The group of $27$ patients was treated with CAR-T cells manufactured from cells that have been enriched for the central memory phenotype before genetic engineering, The group of $14$ patients were treated with CAR-T cells manufactured from a broader population of less-differentiated T cells. For out benchmark we use the former group of patients. According to the clinical trial performed in \cite{Brown2024}, this group is divided into three subgroups of $11$, $8$ and $8$ patients treated with dose schedules 1, 2 and 3, respectively (see \cite{Brown2024}). 

We generate there populations of $11$, $8$ and $8$ v.p. The range of initial tumour sizes is taken from \cite{Brown2024} Table 1, Arms 1-4 and converted to the number of cells using cell density $(1/3)\cdot10^{9}\mbox{cell}/\mbox{cm}^{3}$. The dose schedules were taken from Fig. 1b of \cite{Brown2024}.  We perfume virtual clinical trials following respective CAR-T dose schedules. This simulations yield a median OS of $5.05$ months (95\% CI: $2.37$--$7.94$ months), suggesting that the model reproduces the approximate survival timescale observed in this recurrent-disease setting.

However, this benchmark should be interpreted separately from the newly diagnosed/postoperative Stupp comparison, since the Brown et al.\ cohort represents recurrent disease and involves a different clinical setting, tumor burden, and CAR-T administration schedule.

\begin{table*}[ht!]
\centering
\caption{Clinical benchmarking of the model. Clinical and model-predicted median overall survival (OS) are reported in months. Values in brackets correspond to 95\% confidence intervals (CIs) when available. 
}
\label{tab:clinical_benchmarking}
\begin{tabular}{lcccc}
\hline
Treatment (clinical setting) & $N$ & Clinical median OS [CI] (months)& Model median OS [CI] (months) & Ref. \\
\hline
TMZ (NOA-08)
& 195
& $8.6\,[7.3,10.2]$
& $7.1\,[5.5,11.2]$
& \cite{Wick2012} \\

TMZ (Glantz et al.)
& 32
& $6.0$
& $7.0\,[5.4,11.1]$
& \cite{Glantz2003} \\

RT (Stupp et al.)
& 286
& $12.1$ \,[ 11.2,13.0]
& $12.21\,[11.05,13.74]$
& \cite{Stupp2005} \\

RT+TMZ (Stupp et al.)
& 287
& $14.6$ \,[13.2, 16.8]
& $14.60\,[12.52,16.78]$
& \cite{Stupp2005} \\

CAR-T (Brown et al.)
& 27
& $6.1\,[4.8,9.5]$
& $5.05\,[2.37,7.94]$
& \cite{Brown2024} \\
\hline
\end{tabular}
\end{table*}

Table~\ref{tab:clinical_benchmarking} summarizes the clinical and model-predicted survival outcomes across the different treatment settings. Overall, the comparisons indicate that the model reproduces clinically plausible survival timescales under TMZ, RT, RT+TMZ, and CAR-T therapy. These results should not be interpreted as a complete external validation of all model mechanisms or parameters; rather, they establish a clinically plausible reference framework from which the exploratory comparison of alternative RT-TMZ--CAR-T treatment combinations can be performed.

\subsubsection{Exploratory virtual cohort: reference treatments}

Here, we characterize the response of a shared exploratory cohort ($N=10000$ VPs) under the baseline regimens. Unlike the study-matched groups used for clinical benchmarking in the previous subsection, this identical virtual cohort is maintained across all scenarios, enabling a direct head-to-head comparison. 
Our analysis covers the postoperative untreated scenario, RT monotherapy, TMZ monotherapy, the Stupp protocol, and CAR-T monotherapy. 
These simulations establish the baseline efficacy and hallmark tumor dynamics of each approach, providing a reference framework before we explore the integration of CAR-T cells into the standard Stupp schedule~\cite{Stupp2005}.

\paragraph{Postoperative untreated reference.}

We first consider the evolution of the exploratory virtual cohort in the absence of any adjuvant treatment. In this setting, the initial tumor burden is sampled from the postoperative range $T_0\in[10^7,10^9]$ cells, as defined in Table \ref{tab:model_parameters}, and no RT, TMZ, or CAR-T therapy is subsequently administered. For the cohort of $N=10000$ VPs, the median survival time is $8.76$ months (95\% CI: $8.57$--$8.97$ months). This scenario is used throughout the following comparisons as an untreated postoperative reference.

\paragraph{Radiotherapy.}

RT monotherapy substantially improves survival relative to the postoperative untreated reference. 
The median OS increases from $8.76$ months without adjuvant treatment to $12.54$ months (95\% CI: $12.29$--$12.81$ months) under RT, corresponding to an increase of approximately $3.8$ months. Consistently, the KM curves show a clear separation between both groups, with a HR of $0.742$ (95\% CI: $0.72$--$0.764$) for the untreated postoperative reference relative to RT.

The dynamics of the MVP provide a mechanistic illustration of this survival benefit. During the fractionated RT course, the proliferative tumor compartments are repeatedly reduced, while lethally damaged cells transiently accumulate in the compartment $D$. Consistent with clinical observations of delayed tumor regression, the total tumor volume continues to shrink post-RT due to the prolonged clearance and 
mitotic delay characterizing the lethally damaged population $D$. Nevertheless, viable tumor cells remain after treatment completion and subsequently repopulate the tumor. Thus, within the present model, RT produces a substantial delay in tumor progression but does not achieve durable tumor control as a standalone treatment.

\begin{figure}[ht!]
    \centering
    \includegraphics[width=0.4\textwidth]{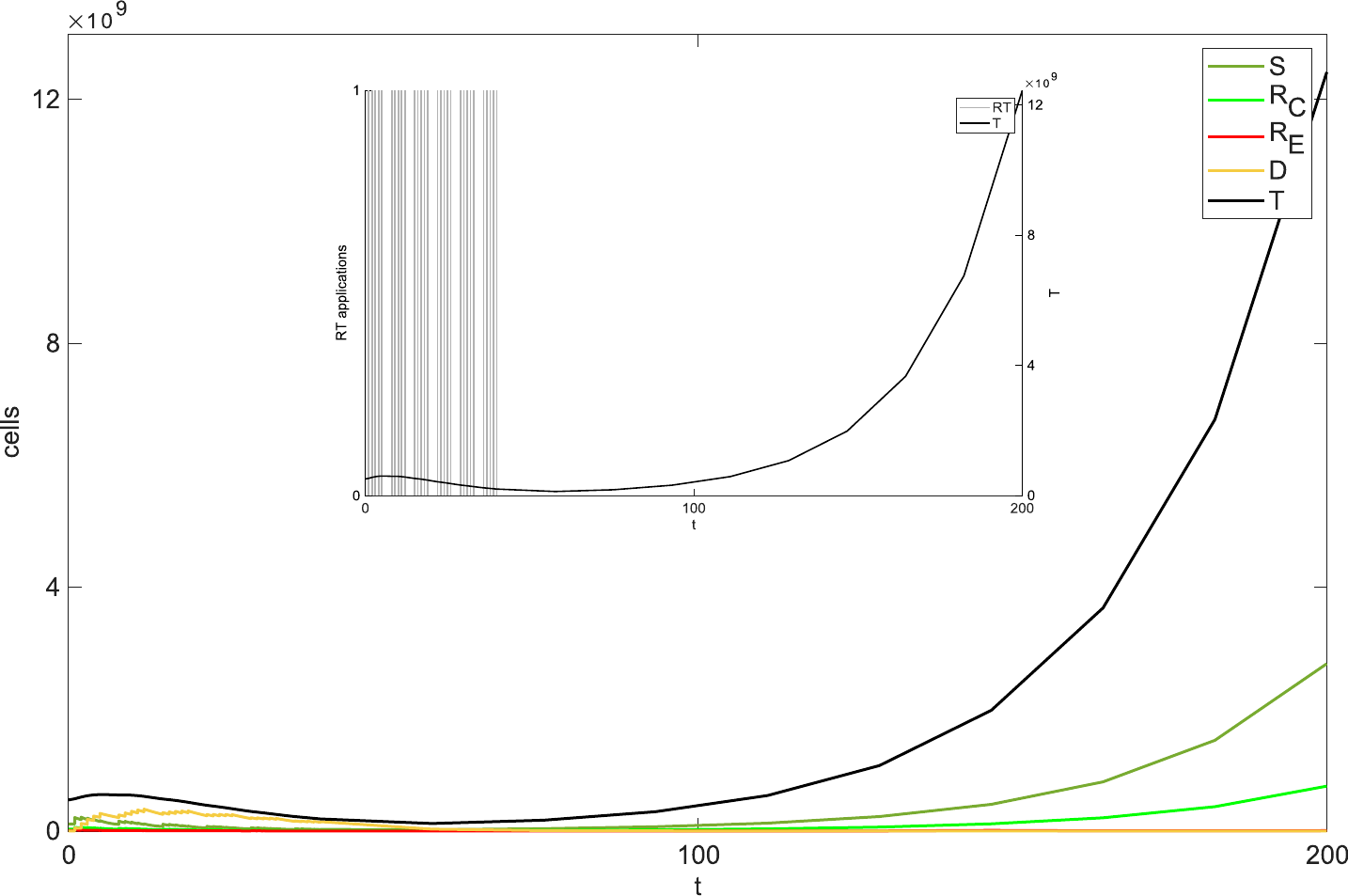} 
    \includegraphics[width=0.4\textwidth]{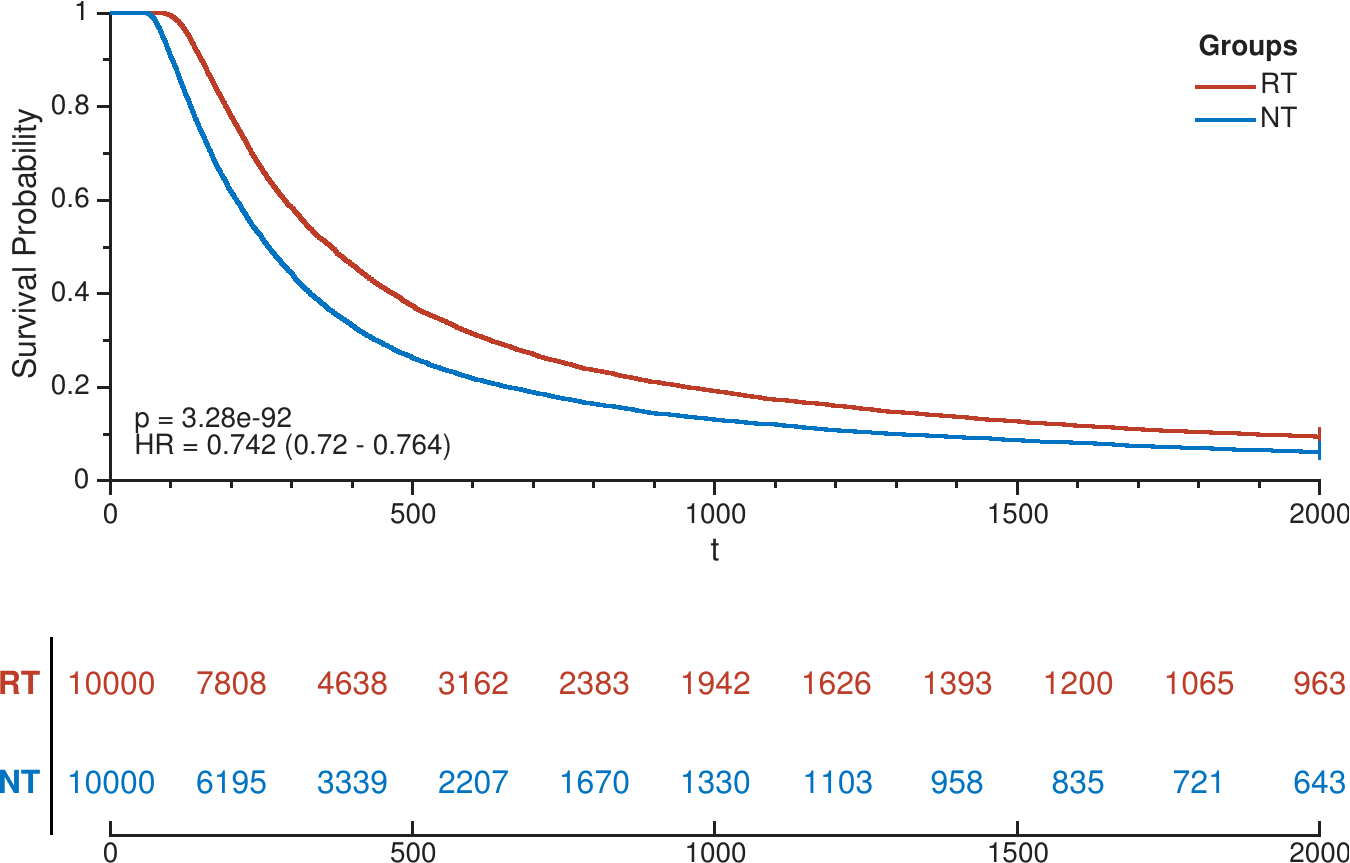}
\caption{RT monotherapy in the exploratory virtual cohort. Left:
dynamics of the MVP, showing the evolution of the
tumor-cell compartments and treatment-induced damage. Right:
KM survival curves for the postoperative untreated reference
and RT-treated cohorts ($N=10000$).}
    \label{fig:sim_dynamics_RT_only}
\end{figure}

\paragraph{Temozolomide.}

TMZ monotherapy produces a more modest survival benefit than RT. The median OS increases from $8.76$ months for the postoperative untreated reference to $10.52$ months (95\% CI: $10.28$-$10.76$ months) under TMZ, corresponding to an increase of approximately $1.8$ months. Consistently, the KM curves show a smaller separation than that observed for RT, with a HR of $0.875$ (95\% CI: $0.85$-$0.9$) for the untreated postoperative reference relative to TMZ.

The dynamics of the MVP are consistent with this more limited population-level effect. Repeated TMZ administration reduces the TMZ-sensitive proliferative compartments and generates a transient accumulation of lethally damaged cells. However, viable tumor cells remain after treatment, with TMZ-resistant populations persisting and being selected during TMZ therapy, ultimately allowing tumor regrowth. Consequently, within this heterogeneous cohort, TMZ monotherapy merely delays progression, offering a weaker survival benefit than RT.

\begin{figure}[h!]
    \centering
    \includegraphics[width=0.4\textwidth]{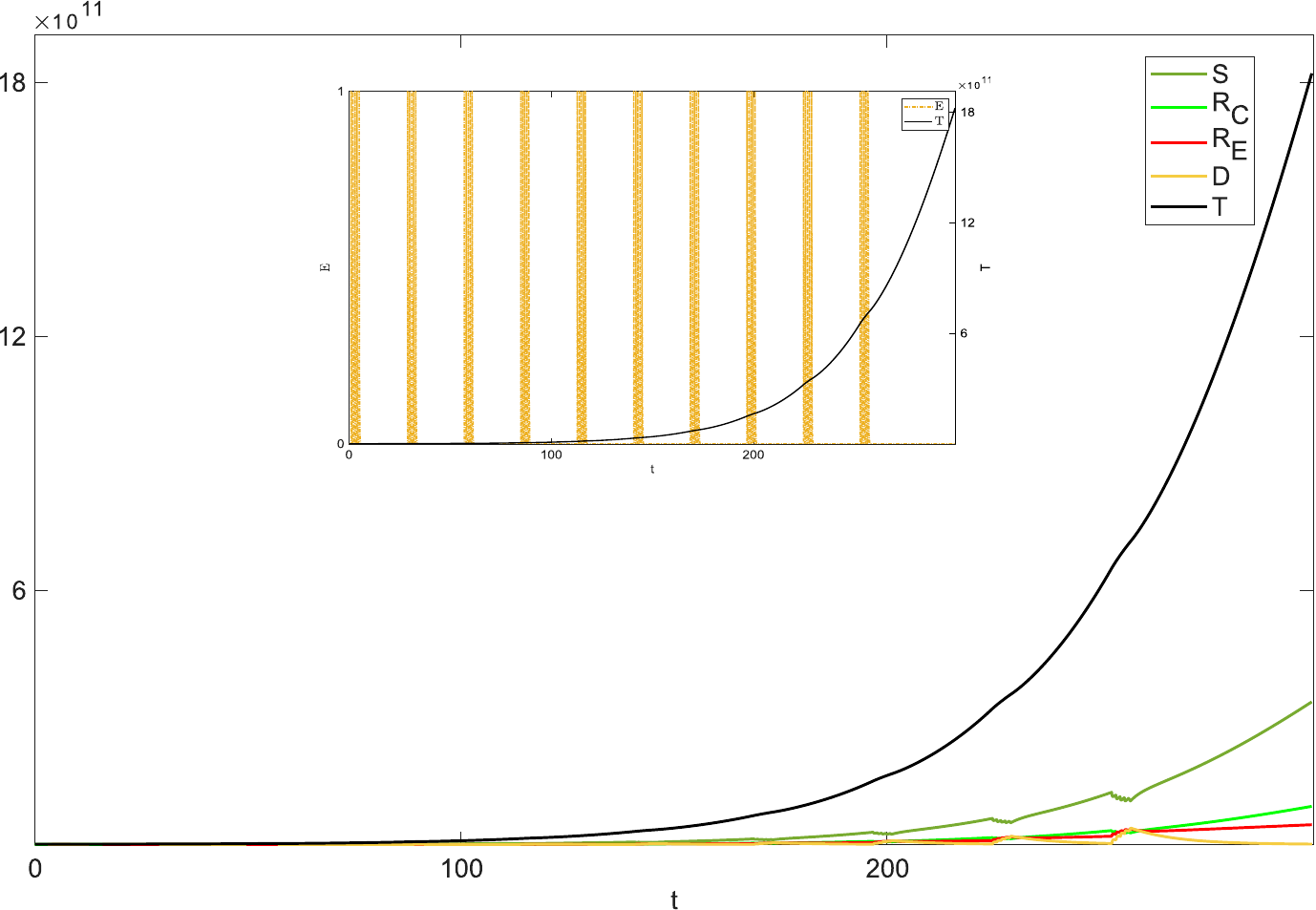} 
    \includegraphics[width=0.4\textwidth]{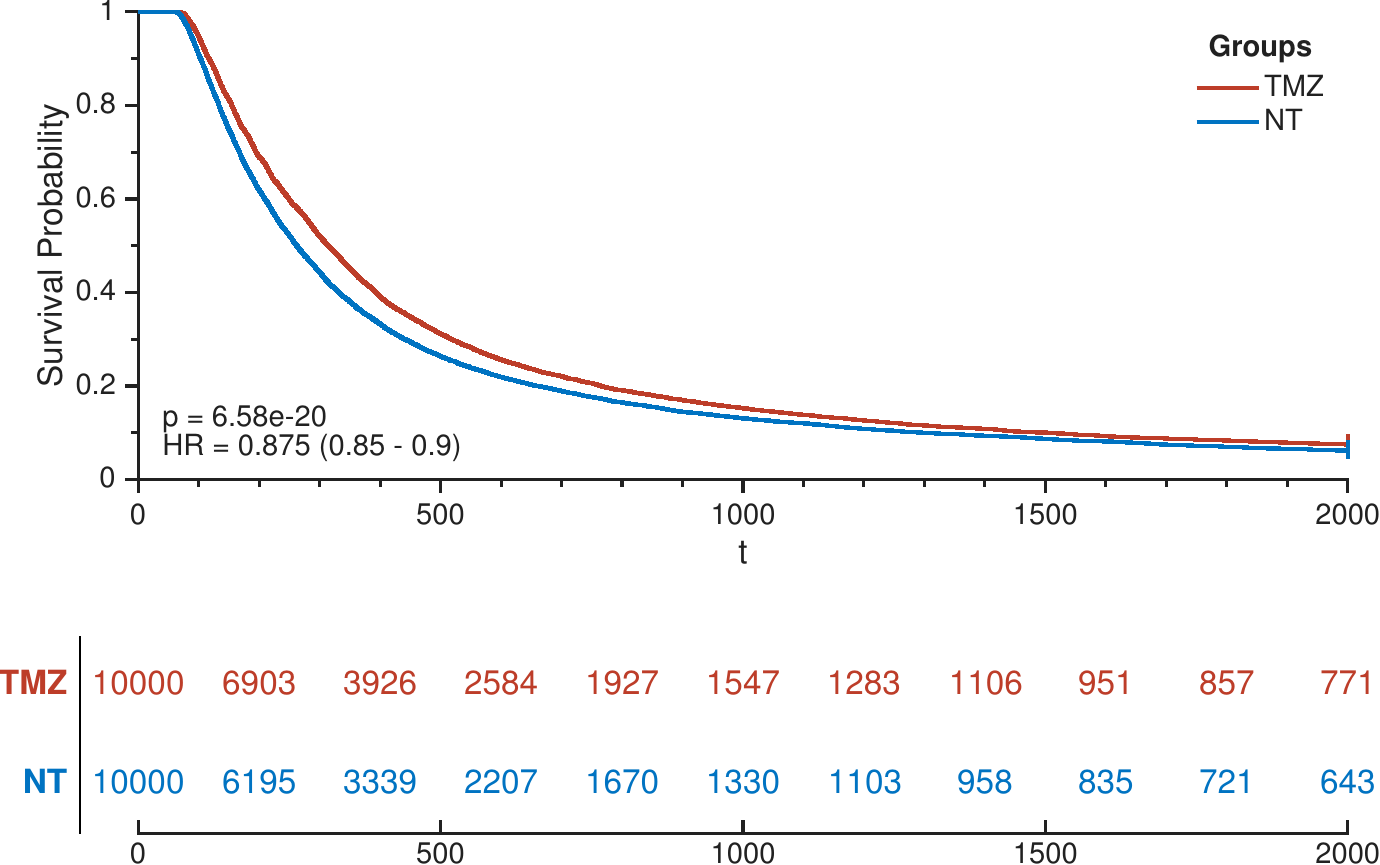}
\caption{TMZ monotherapy in the exploratory virtual cohort. Left:
dynamics of the median virtual patient, showing the evolution of the
tumor-cell compartments and treatment-induced damage. Right:
KM survival curves for the postoperative untreated reference
and TMZ-treated cohorts ($N=10000$).}
    \label{fig:TMZ_only_dynamics}
\end{figure}

\paragraph{Stupp protocol.}

Combining RT and TMZ via the Stupp protocol~\cite{Stupp2005} yields the highest survival advantage among the conventional regimens tested in this cohort. The median OS reaches $13.88$ months ($95\%$ CI: $13.57$--$14.21$), compared to 
$12.54$ months under RT monotherapy, $10.52$ months under TMZ monotherapy, and $8.76$ months for the untreated control. The KM curves show a clear separation, with the Stupp protocol achieving a HR of $0.667$ ($95\%$ CI: $0.648$--$0.687$) 
relative to the postoperative untreated reference.

The MVP dynamics highlight this complementary action. Simultaneous radiation fractions and TMZ cycles deeply deplete the proliferative compartments while triggering a transient buildup of lethally damaged cells, keeping the viable tumor burden suppressed significantly longer than either monotherapy. Yet, residual viable cells evade eradication and eventually drive repopulation. Thus, while the Stupp protocol substantially extends disease control, it falls short of permanent tumor eradication within the current model framework.

\begin{figure}[ht!]
    \centering
    \includegraphics[width=0.4\textwidth]{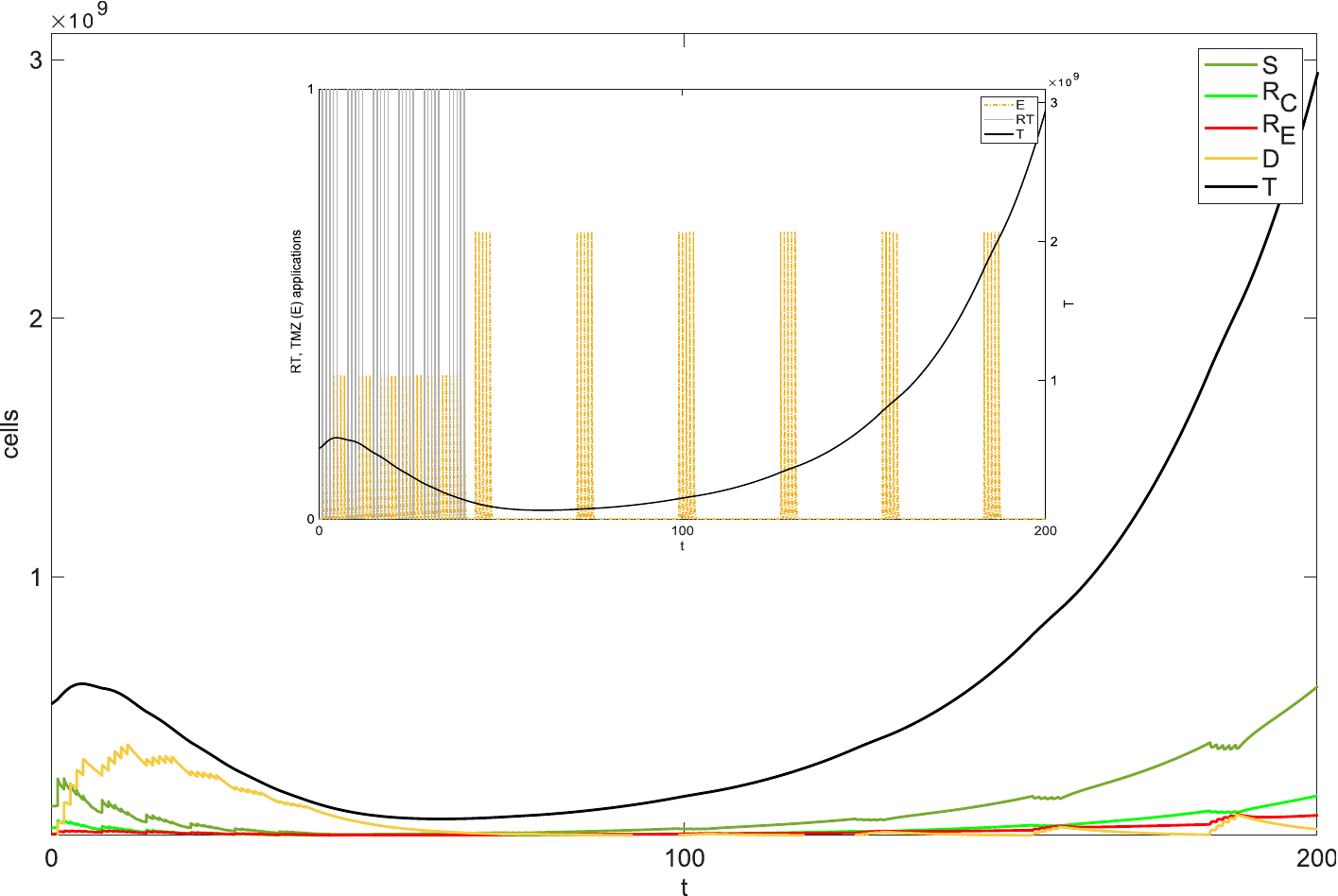} 
    \includegraphics[width=0.4\textwidth]{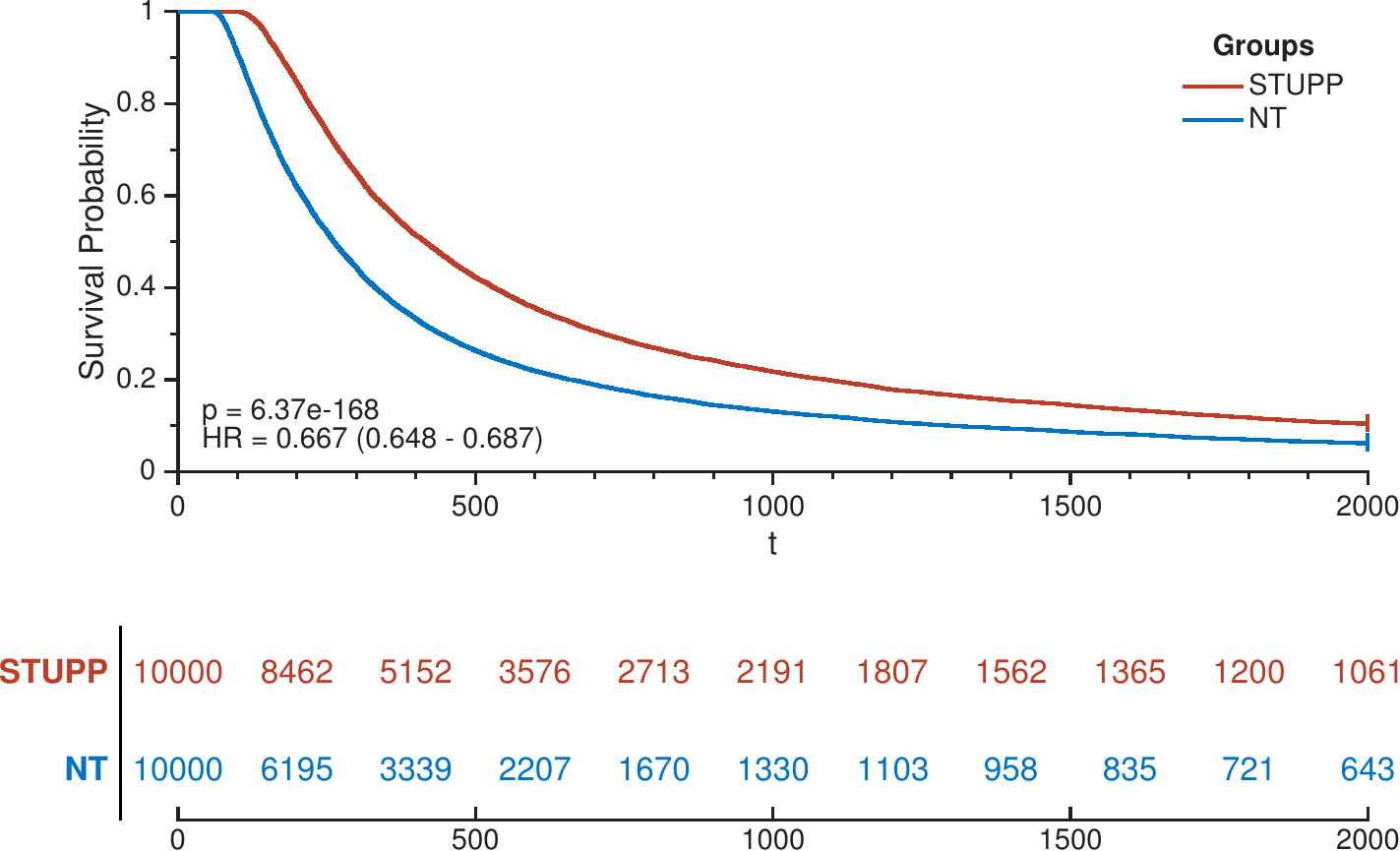}
    \caption{Stupp protocol in the exploratory virtual cohort. Left:
dynamics of the median virtual patient, showing the evolution of the
tumor-cell compartments and treatment-induced damage under combined
RT and TMZ treatment. Right: KM survival curves for the postoperative untreated reference and Stupp-treated cohorts
($N=10000$).}
    \label{fig:Stupp_dynamics}
\end{figure}

\paragraph{CAR-T cells.}

Finally, we evaluate CAR-T monotherapy, consisting of three infusions administered at 7-day intervals with $3\times10^8$, $3\times10^8$, and $4\times10^8$ cells, 
respectively, for a cumulative dose of $10^9$ cells. Under this regimen, median OS increases to $9.76$ months ($95\%$ CI: $9.50$--$9.98$), up from $8.76$ months for the untreated reference. Thus, CAR-T therapy extends median OS by approximately one month---an improvement substantially smaller than those yielded by TMZ, RT, or their combination into the Stupp protocol.

The KM curves reflect this modest population-level effect, with the CAR-T protocol achieving a HR of $0.91$ (95\% CI: $0.89-0.94$) relative to the postoperative untreated reference. The MVP dynamics provide a mechanistic basis for this limited long-term efficacy. Following infusion, the CAR-T cell population expands transiently to drive tumor-cell lysis, but subsequently contracts. Meanwhile, the resistant subpopulation $R_C$, which evades CAR-T targeting, persists and progressively fuels tumor regrowth.

Importantly, the constrained efficacy of CAR-T monotherapy does not preclude a synergistic benefit when combined with conventional treatments. In our model, CAR-T cells and the Stupp components act on partially complementary compartments: immunotherapy targets TMZ-resistant but CAR-T-sensitive cells, while TMZ and RT exert therapeutic pressure on populations escaping T-cell-mediated killing. This cooperative dynamic provides the mechanistic rationale for exploring whether integrating CAR-T therapy into the standard Stupp schedule can enhance tumor control.
\begin{figure}[ht!]
    \centering
    \includegraphics[width=0.4\textwidth]{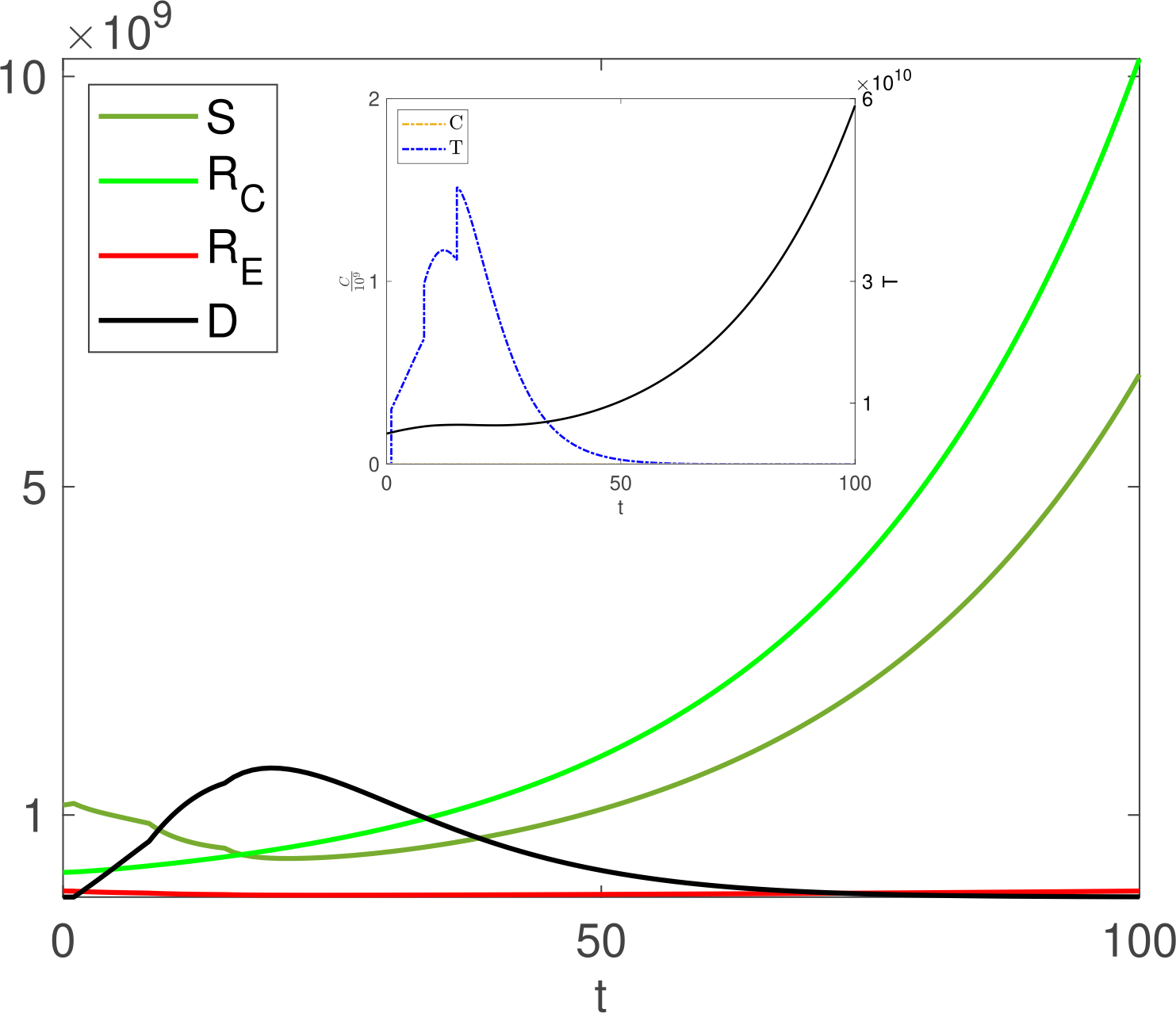} 
    \includegraphics[width=0.4\textwidth]{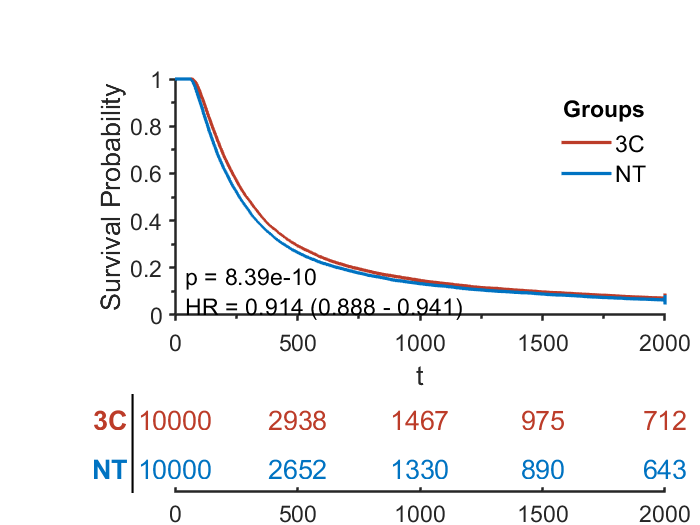}
\caption{CAR-T monotherapy in the exploratory virtual cohort. Left:
dynamics of the median virtual patient, showing the evolution of the
tumor-cell compartments and the CAR-T cell population following three
CAR-T infusions. Right: KM survival curves for the
postoperative untreated reference and CAR-T-treated cohorts
($N=10000$).  }
    \label{fig:CAR_T_only_tumor}
\end{figure}

The population-level outcomes of the reference treatment strategies are summarized in Table~\ref{tab:reference_treatments}. Among the treatments considered individually, RT produces the largest survival improvement, while CAR-T monotherapy provides a comparatively modest benefit. The
Stupp protocol yields the longest median survival among the reference strategies, in line with clinical observations, and therefore constitutes the natural comparator for assessing the additional benefit of incorporating CAR-T therapy.

\begin{table}[ht]
\centering
\caption{Median overall survival (OS), hazard ratio (HR) and their respective 95\% CI for the reference treatment strategies in the common exploratory cohort of $N=10000$ virtual patients. 
}
\label{tab:reference_treatments}
\begin{tabular}{lcc}
\hline
Protocol & Median OS (months) & HR \\
\hline
Surgery & 8.76 $[8.57,8.97]$ & $-$\\
TMZ & 10.52 $[10.28,10.76]$ & 0.875 $[0.85, 0.9]$\\
RT & 12.54 $[12.29,12.81]$ & 0.742 $[0.72,0.764]$\\
Stupp & 13.88 $[13.57,14.21]$ & 0.667 $[0.648,0.687]$\\
3C & 9.76 $[9.50,9.98]$ & 0.91 $[0.89,0.94] $\\
\hline
\end{tabular}
\end{table}

\subsubsection{Exploratory virtual cohort: combined treatment}

Having characterized the baseline regimens, we now investigate whether integrating CAR-T cell therapy into the Stupp protocol yields an additional population-level survival benefit, and to what extent this depends on scheduling. 
To ensure a rigorous comparison, all protocols are evaluated using the same cohort 
of $N=10000$ VPs. Retaining identical biological parameters and initial conditions 
for each VP creates a matched design, allowing us to isolate the effects of 
scheduling from population-level variability.

Hereafter, if 3C appears only once, it denotes the CAR-T schedule described above, consisting of three CAR-T cell infusions administered at 7-day intervals, with doses of $3\times10^8$, $3\times10^8$, and $4\times10^8$ cells, respectively, for a
total administered dose of $10^9$ CAR-T cells. Instead, if 3C appears two times in the protoocls, it denotes the CAR-T schedule described above but halves doses, i.e., considering each time three CAR-T cell infusions administered at 7-day intervals, with doses of $1.5\times10^8$, $1.5\times10^8$, and $2\times10^8$ cells, respectively, for a
total administered dose of $10^9$ CAR-T cells. For compactness, Stupp(C) and
Stupp(A) denote the concomitant and adjuvant phases of the Stupp protocol \cite{Stupp2005},
respectively.

We consider five treatment sequences designed to place CAR-T therapy in
different stages relative to standard chemoradiotherapy: 3C-Stupp,
Stupp-3C, 3C-Stupp-3C, Stupp(C)-3C-Stupp(A), and
Stupp(C)-3C-Stupp(A)-3C. These protocols compare CAR-T
administration before or after the Stupp protocol, on both sides of
it, or between its concomitant and adjuvant phases.
For each strategy, we compare the median OS, its corresponding
$95\%$ CI, and the KM survival distribution with
those obtained under the Stupp protocol alone, since the goal is to improve the standard Stupp protocol.

\paragraph{3C-Stupp protocol.}

We first consider the 3C-Stupp sequence, in which the three CAR-T cell
infusions are administered before the Stupp protocol. The median OS increases from $13.88$ months (95\% CI: $13.57-14.21$ months) under Stupp protocol to
$14.67$ months (95\% CI: $14.40-15.04$ months) under 3C-Stupp, corresponding to an increase of approximately $0.79$ months.

The KM curves (Fig.\ref{fig:3C_Stupp_dynamics} bottom panel) show a statistically significant, although modest,
survival improvement with the combined protocol ($p=0.00136$), with a
HR of $0.953$ (95\% CI: $0.925-0.981$), for Stupp relative
to 3C-Stupp, 
indicating that, at the population
level, the additional benefit produced by administering CAR-T cells before
Stupp is limited despite reaching statistical significance.

The dynamics of the MVP illustrate the sequential action
of the two therapeutic components (Fig.\ref{fig:3C_Stupp_dynamics} left panel). The initial CAR-T treatment transiently
modifies the tumor composition and reduces CAR-T-sensitive populations, even without expanding, after which the RT and TMZ components of the Stupp protocol exert additional
pressure on the remaining tumor cells. Nevertheless, residual
viable populations ultimately persist under the adjuvant phase of the Stupp protoocl and drive tumor regrowth. 

\begin{figure}[ht!]
    \centering
    \includegraphics[width=0.4\textwidth]{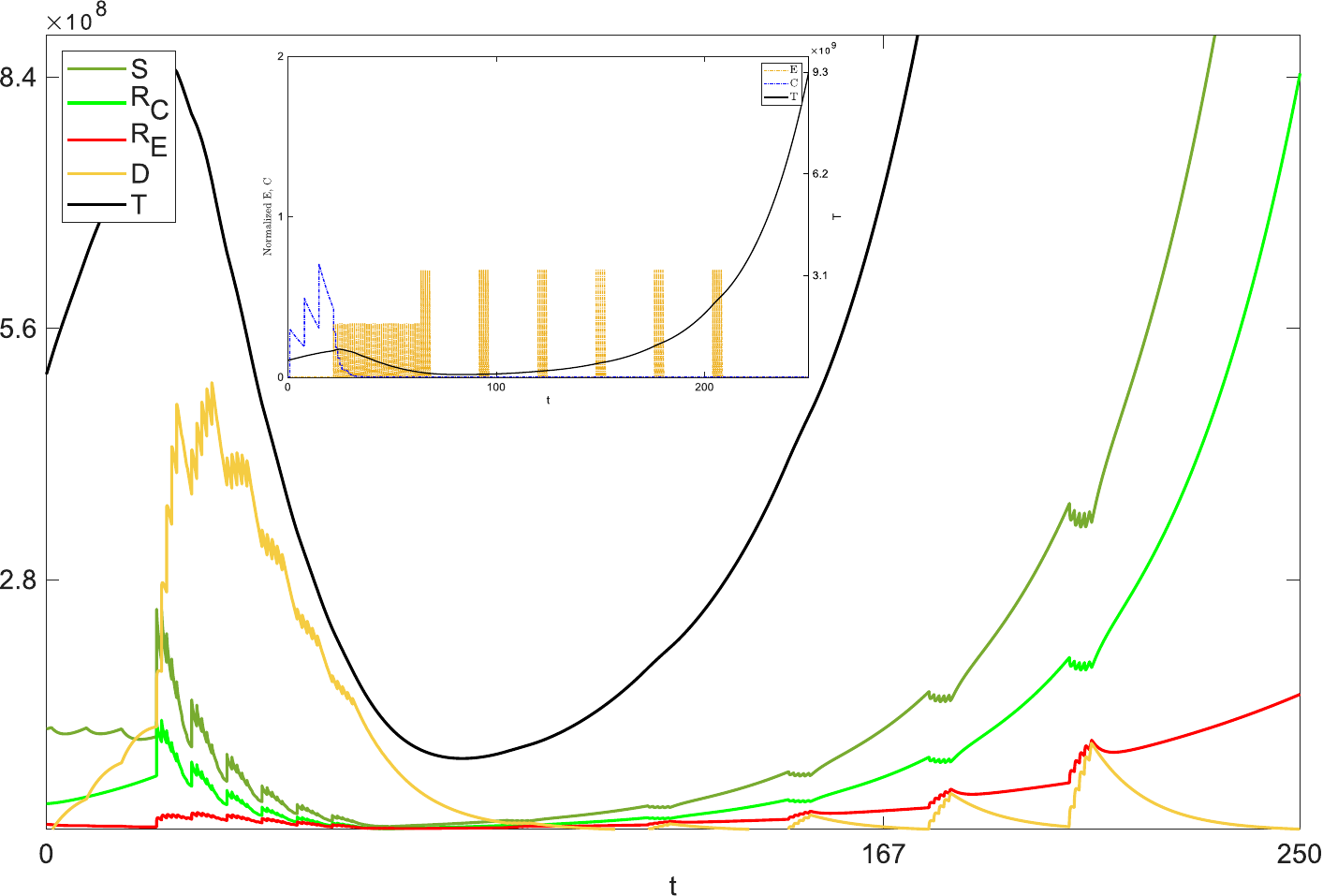} 
    \includegraphics[width=0.4\textwidth]{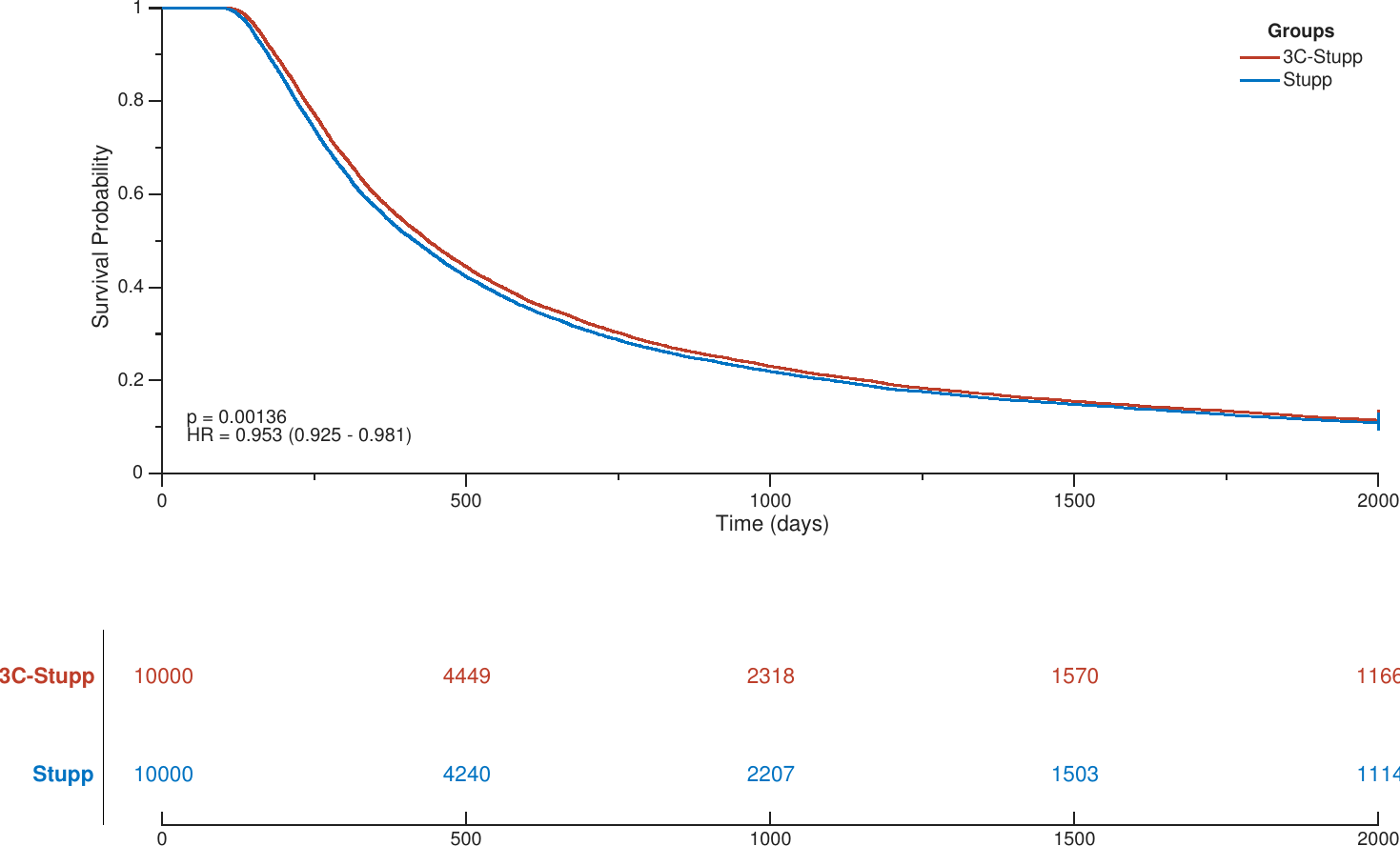}
\caption{3C-Stupp treatment in the exploratory virtual cohort. Left:
dynamics of the median virtual patient under three CAR-T cell infusions
followed by the Stupp protocol. Right: KM survival
curves for the 3C--Stupp and Stupp-treated cohorts ($N=10000$).}
    \label{fig:3C_Stupp_dynamics}
\end{figure}

\paragraph{Stupp-3C protocol.}

Next, we consider the Stupp-3C sequence, in which the Stupp
protocol is followed by three CAR-T cell infusions. The median OS increases from $13.88$ months under Stupp protocol to
$14.74$ months (95\% CI: $14.40-15.06$ months) under Stupp-3C, corresponding to an increase of approximately $0.9$ months.

The KM curves show a statistically significant but modest survival improvement relative to Stupp alone ($p=0.00611$), with a HR of $0.96$ (95\% CI: $0.932-0.988$) for Stupp relative to
Stupp-3C (Fig.\ref{fig:Stupp_3C_dynamics} right panel). As for the 3C-Stupp sequence, the relatively small reduction in HR indicates that the population-level effect of adding CAR-T therapy
remains limited despite the statistically significant difference.

The MVP dynamics illustrate the complementary action of the two treatments in the reverse temporal sequence (Fig.~\ref{fig:Stupp_3C_dynamics}, top panel). 
RT and TMZ initially deplete the tumor populations during the concomitant Stupp phase. However, during the subsequent adjuvant phase, the tumor regrows to nearly its initial volume, albeit with altered heterogeneity due to the selection of TMZ-resistant clones. Following this, CAR-T therapy targets the remaining sensitive cells, including the TMZ-resistant fractions that remain susceptible to immune-mediated killing. 
Nevertheless, CAR-T-resistant sub-population and other viable residues persist, ultimately driving a second wave of tumor regrowth. Notably, CAR-T cells undergo a more pronounced expansion when administered after the Stupp protocol rather than before and are not subject to RT- and TMZ-induced mortality (compare the dotted blue $C$ lines in the top panels of Figs.~\ref{fig:3C_Stupp_dynamics} and~\ref{fig:Stupp_3C_dynamics}). This enhanced expansion could provide a mechanistic explanation for the slightly higher survival benefit observed at the population level.

\begin{figure}[ht!]
    \centering
    \includegraphics[width=0.4\textwidth]{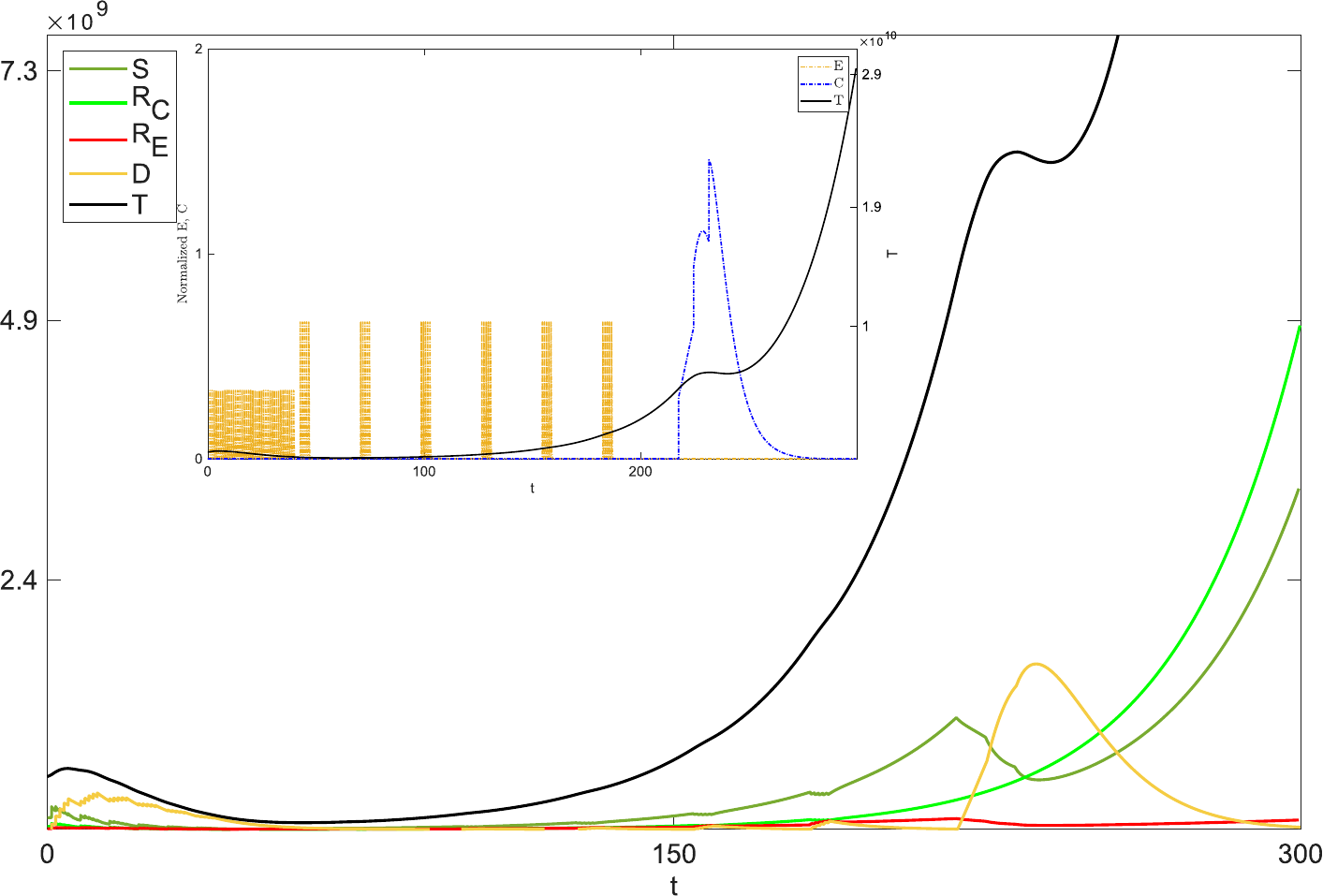} 
    \includegraphics[width=0.4\textwidth]{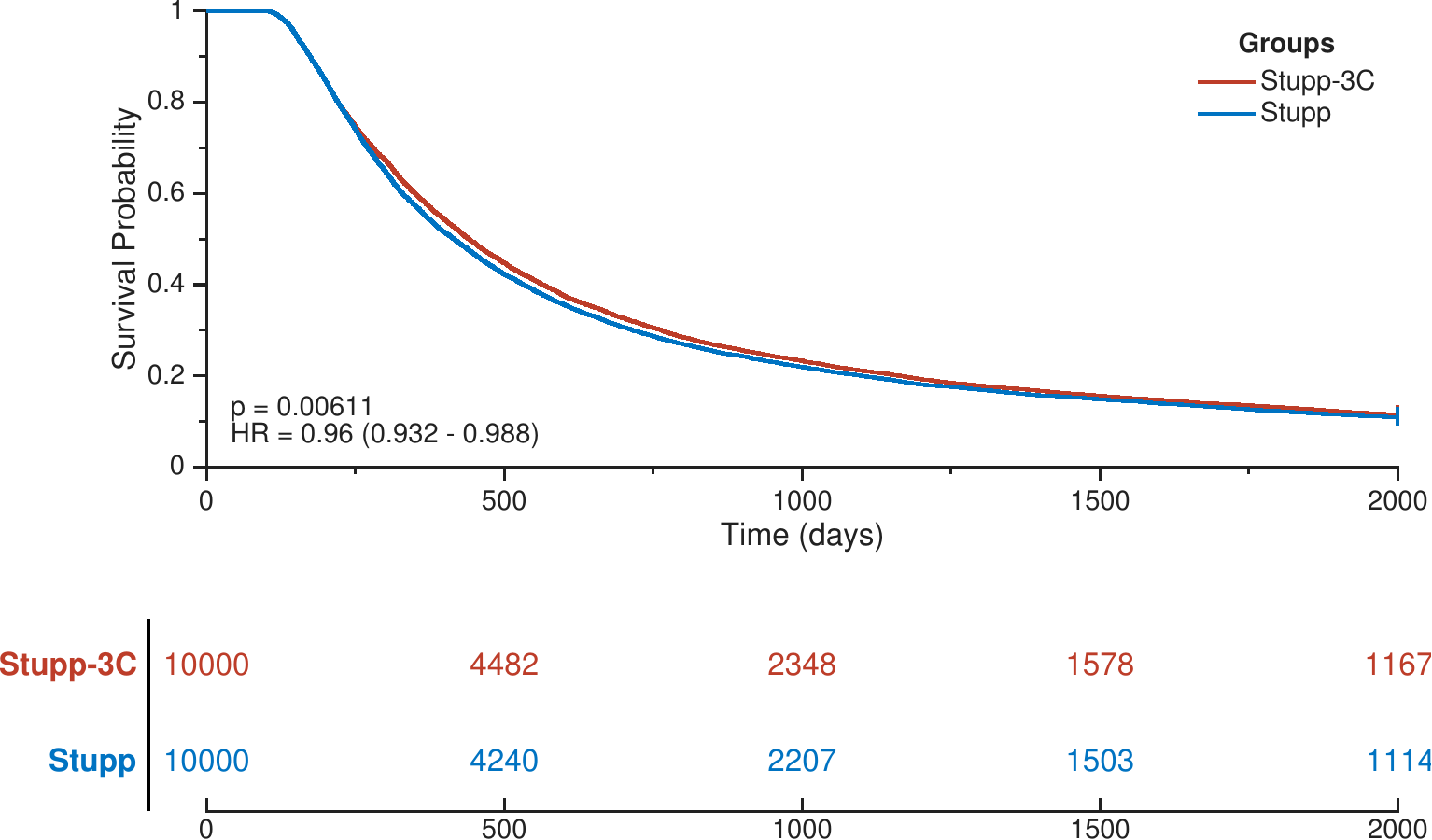}
\caption{Stupp-3C treatment in the exploratory virtual cohort. Left: dynamics of the median virtual patient under the Stupp protocol followed by three CAR-T cell infusions. Right: KM survival curves for the Stupp-3C and Stupp-treated cohorts ($N=10000$).}
    \label{fig:Stupp_3C_dynamics}
\end{figure}

\paragraph{3C-Stupp-3C protocol.}

Here, we evaluate the 3C-Stupp-3C sequence, which schedules three CAR T-cell infusions prior to the Stupp protocol and three subsequent infusions upon its completion. This strategy subjects the tumor to CAR T-cell-mediated pressure both before and after standard chemoradiotherapy, albeit with a lower individual dose intensity to maintain a constant cumulative dose of $10^9$ cells. The median OS increases from $13.88$ months under Stupp protocol to $14.78$ months
(95\% CI: $14.47-15.11$ months) under 3C-Stupp-3C, corresponding to an increase of approximately $0.9$ months.

The KM curves show a statistically significant survival
improvement relative to Stupp alone ($p=0.000541$), with an HR of
$0.949$ (95\% CI: $0.922-0.978$) for Stupp relative to 3C-Stupp-3C (Fig.\ref{fig:3C_Stupp_3C_dynamics} bottom panel).
As in the previous combined schedules, the magnitude of the population-level
effect remains modest despite statistical significance.

The MVP dynamics illustrate how the residual tumor is subjected to repeated, complementary therapeutic pressures (Fig.~\ref{fig:3C_Stupp_3C_dynamics}, top panel). The initial CAR-T infusions, despite their lack of expansion, manage to constrain the immune-sensitive compartments. Nonetheless, the total tumor burden exceeds its  baseline volume by the time the concomitant Stupp phase begins. While RT and TMZ initially deplete these populations, the tumor recovers during the subsequent  adjuvant phase, reaching an even greater size. The final CAR-T courses then target the remaining sensitive cells, including the chemo-resistant fractions that retain susceptibility to immune-mediated killing. As observed previously, CAR-T cells expand more robustly when positioned after the Stupp protocol rather than before. However, compared to the Stupp-3C regimen, this second expansion is smaller, but it is counterbalanced by the early antitumor activity at the very beginning of the 3C-Stupp-3C sequence. This dynamic trade-off provides a mechanistic explanation for the remarkably similar population-level survival benefits yielded by the Stupp-3C and 3C-Stupp-3C protocols.

\begin{figure}[ht!]
    \centering
    \includegraphics[width=0.4\textwidth]{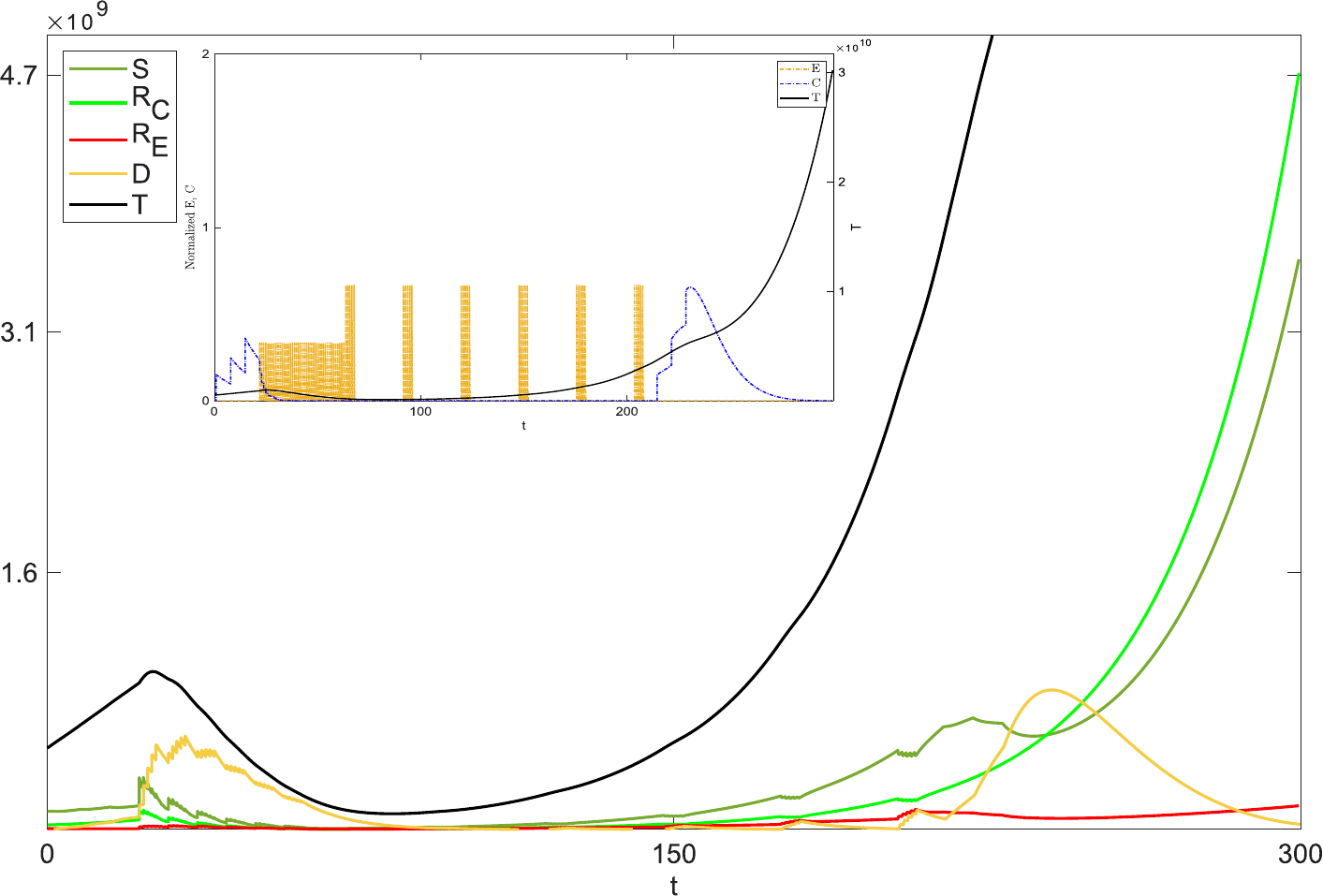}
    \includegraphics[width=0.4\textwidth]{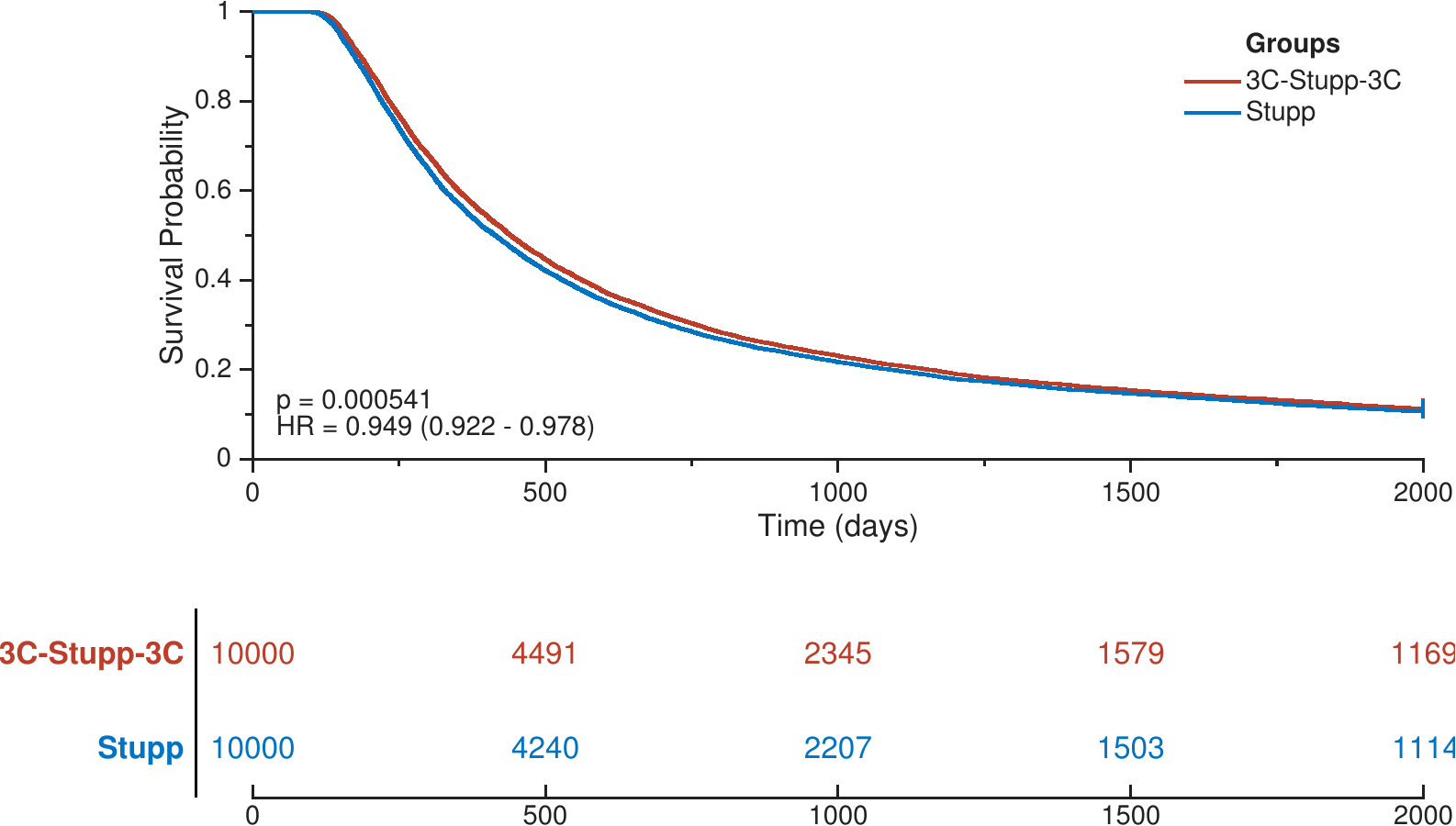}
\caption{3C-Stupp-3C treatment in the exploratory virtual cohort. Left:
dynamics of the median virtual patient under three CAR-T cell infusions,
the Stupp protocol, and three additional CAR-T infusions. Right: KM survival curves for the 3C-Stupp-3C and Stupp-treated cohorts ($N=10000$).}
    \label{fig:3C_Stupp_3C_dynamics}
\end{figure}

\paragraph{Stupp(C)-3C-Stupp(A) protocol.}

Next, we consider the Stupp(C)-3C-Stupp(A) sequence, in which the
concomitant phase of the Stupp protocol is followed by three CAR-T cell infusions before the Stupp adjuvant phase. This strategy therefore introduces CAR-T therapy between the two main phases of standard chemoradiotherapy \cite{Stupp2005}. The
median OS increases from $13.88$ months under Stupp protocol to $14.74$ months (95\% CI: $14.39-15.07$ months) under Stupp(C)-3C-Stupp(A), corresponding to an
increase of approximately $0.9$ months.

The KM curves show a statistically significant but modest
survival improvement relative to Stupp alone ($p=0.00045$), with a hazard
ratio of $0.949$ (95\% CI: $0.921-0.977$) for Stupp relative to Stupp(C)-3C-Stupp(A) (Fig.\ref{fig:StuppC_3C_Stupp_A_dynamics} bottom panel). As for the previous combined protocols, the small reduction in hazard indicates that the additional
population-level benefit remains limited in magnitude.

The MVP dynamics illustrate the sequential action of the three therapeutic components (Fig.~\ref{fig:StuppC_3C_Stupp_A_dynamics}, top panel). The concomitant RT-TMZ phase initially depletes the proliferative tumor populations, allowing the subsequent CAR-T cells to target the residual immune-sensitive disease before the adjuvant TMZ phase is introduced. By placing CAR-T therapy immediately after the radiation regimen, this temporal scheduling prevents the subsequent irradiation of the infused lymphocytes, even though the ensuing adjuvant TMZ phase still compromises their viability. This cooperative dynamic provides a mechanistic explanation for the slightly higher population-level survival benefit observed compared to the 3C-Stupp protocol.

\begin{figure}[ht!]
    \centering
    \includegraphics[width=0.4\textwidth]{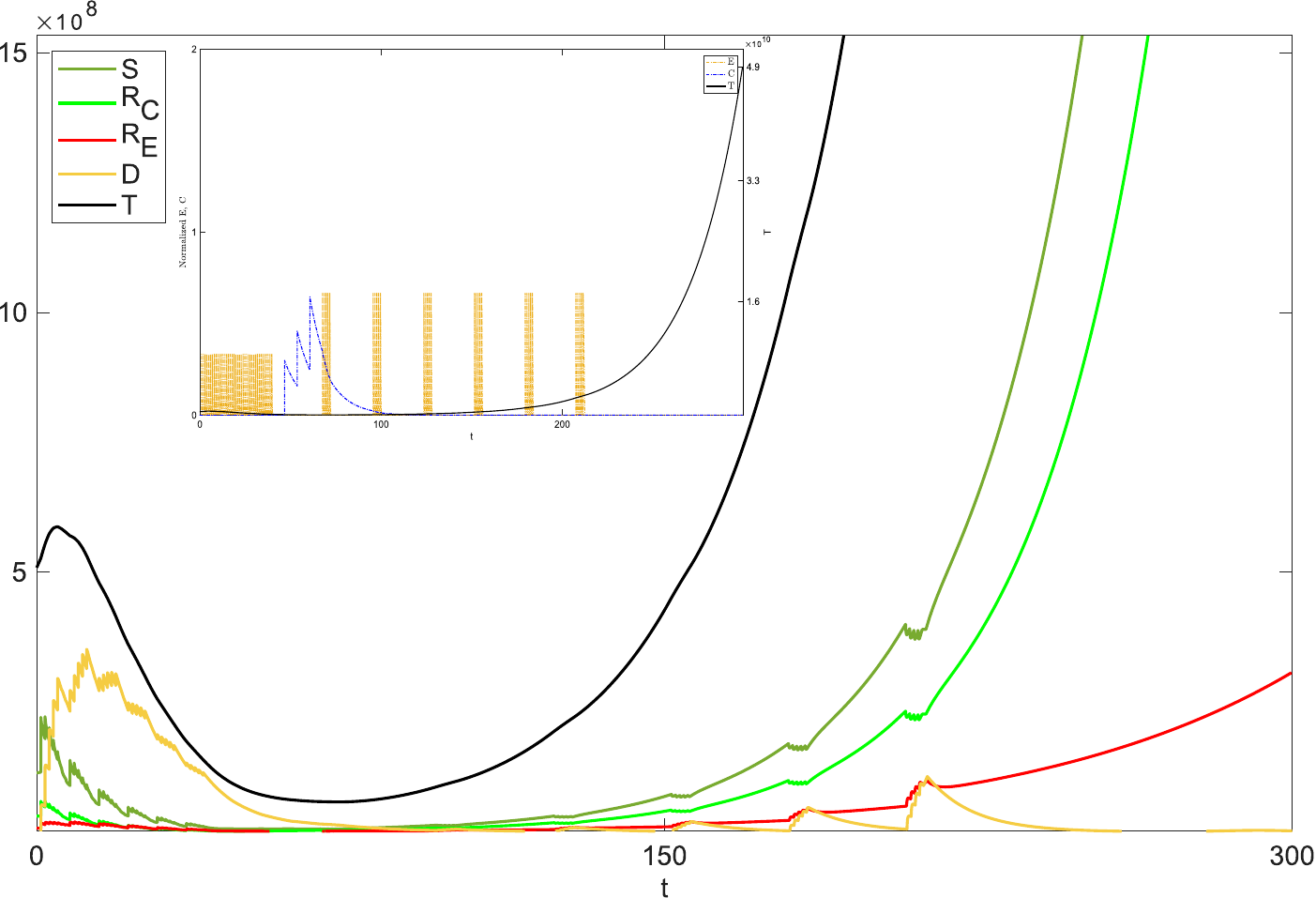} 
    \includegraphics[width=0.4\textwidth]{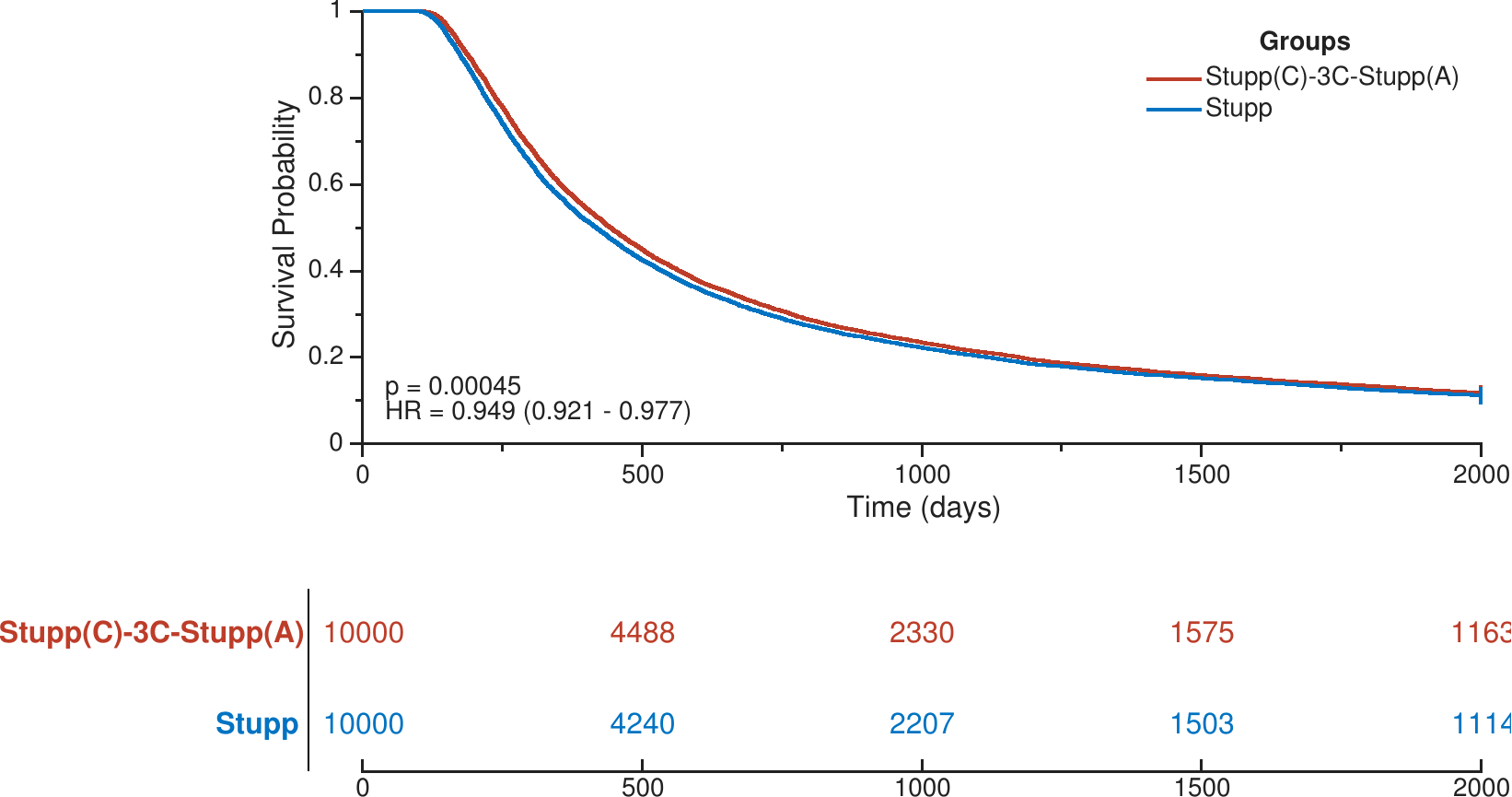}
\caption{Stupp(C)-3C-Stupp(A) treatment in the exploratory virtual
cohort. Left: dynamics of the median virtual patient under the concomitant phase of the Stupp protocol, three CAR-T cell infusions, and the subsequent
adjuvant TMZ phase. Right: KM survival curves for the Stupp(C)-3C-Stupp(A) and Stupp-treated cohorts ($N=10000$).}
    \label{fig:StuppC_3C_Stupp_A_dynamics}
\end{figure}

\paragraph{Stupp(C)-3C-Stupp(A)-3C protocol.}

Finally, we consider the Stupp(C)-3C-Stupp(A)-3C sequence. In this protocol, the concomitant phase of Stupp is followed by three CAR-T cell
infusions, after which the adjuvant TMZ phase is administered and a second block of three CAR-T infusions is given after completion of the Stupp regimen. Thus, CAR-T therapy is applied both between the concomitant and
adjuvant phases and again after completion of standard chemoradiotherapy.
The median OS increases from $13.88$ months
under Stupp protocol to $14.74$ months (95\% CI: $14.40-15.08$ months) under Stupp(C)-3C- Stupp(A)-3C, corresponding to an increase of approximately $0.9$ months.

The KM curves show a statistically significant but modest
survival improvement relative to Stupp alone ($p=0.00053$), with a hazard
ratio of $0.949$ (95\% CI: $0.922-0.978$) for Stupp relative to 
Stupp(C)-3C Stupp(A)-3C (Fig.\ref{fig:StuppC_3C_Stupp_A_3C_dynamics} bottom panel). As observed for the other combined protocols, the magnitude of the population-level effect remains
limited despite the statistically significant difference.

The MVP dynamics illustrate the impact of multi-dose CAR-T therapy targeting the residual disease at different stages of chemoradiotherapy (Fig.~\ref{fig:StuppC_3C_Stupp_A_3C_dynamics}, top panel). The first CAR-T block is introduced immediately after the concomitant RT-TMZ phase, whereas the second targets the viable burden remaining after adjuvant TMZ. Although treatment-resistant and residual viable clones ultimately persist to drive tumor regrowth, CAR-T cells undergo significant expansion only when administered after the adjuvant chemotherapy. Interestingly, delivering the initial CAR-T course before the Stupp protocol rather than between its concomitant and adjuvant phases leaves the tumor slightly more susceptible to immunotherapy prior to the final three infusions (compare the green $R_C$ lines in the top panels of Figs.~\ref{fig:3C_Stupp_3C_dynamics} and~\ref{fig:StuppC_3C_Stupp_A_3C_dynamics}). This small divergence in tumor composition provides a mechanistic explanation for the slightly higher population-level survival benefit observed under the 3C-Stupp-3C protocol compared to the Stupp(C)-3C-Stupp(A)-3C regimen.

\begin{figure}[ht!]
    \centering
    \includegraphics[width=0.4\textwidth]{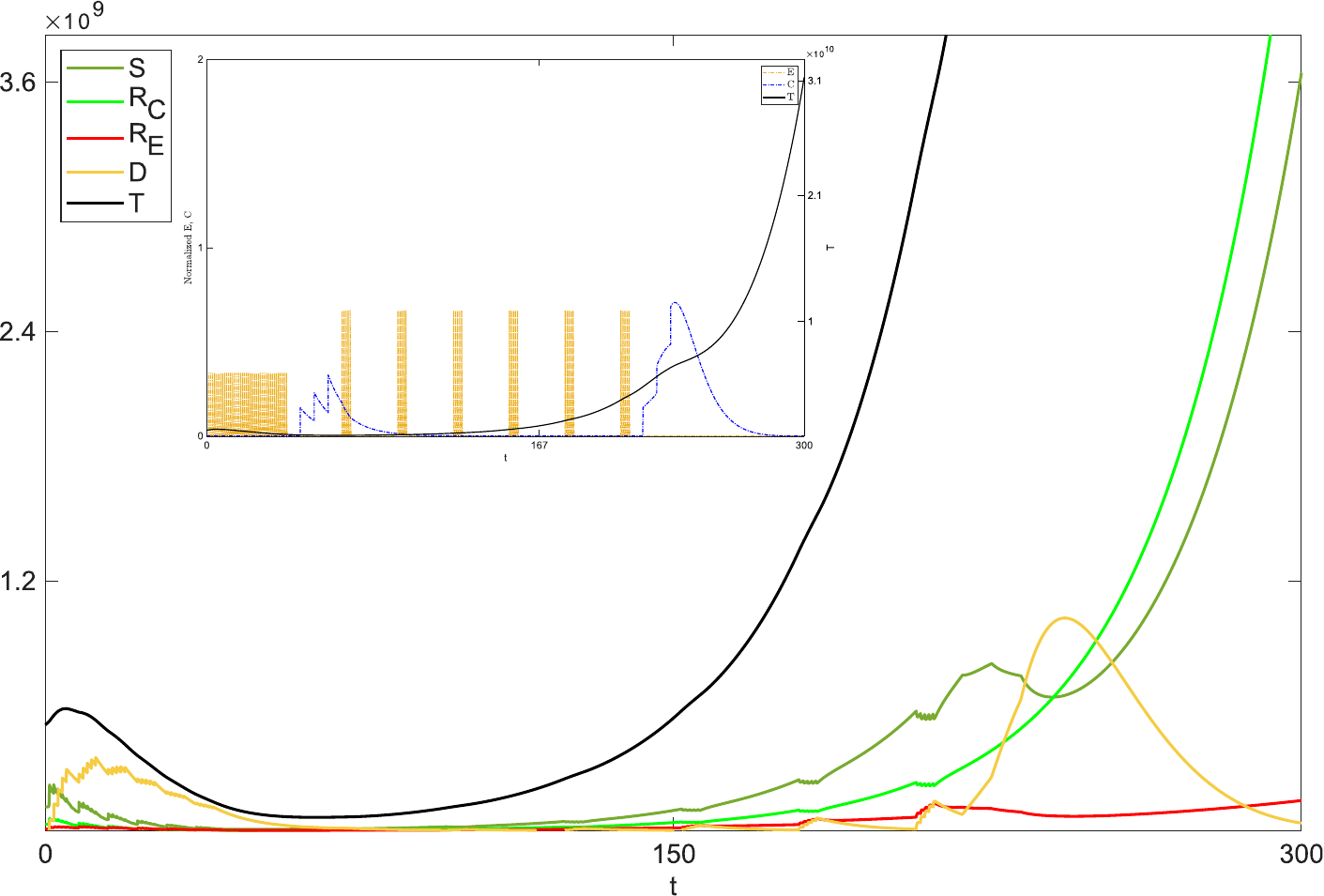} 
    \includegraphics[width=0.4\textwidth]{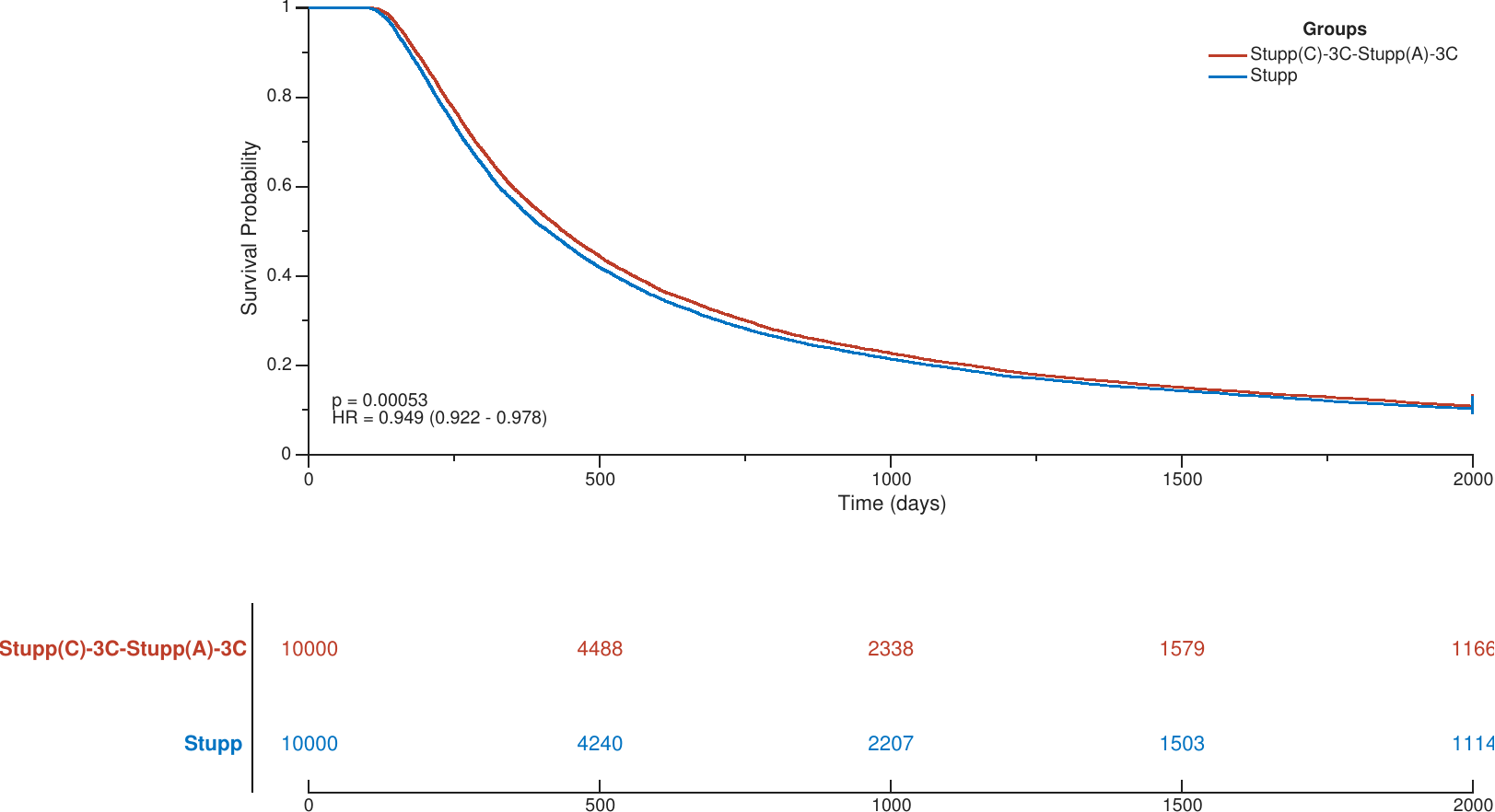}
  \caption{Stupp(C)-3C-Stupp(A)-3C treatment in the exploratory virtual
cohort. Lefy: dynamics of the median virtual patient under the concomitant
phase of the Stupp protocol, three CAR-T cell infusions, the adjuvant TMZ
phase, and three additional CAR-T infusions. Right: KM survival curves for the Stupp(C)-3C-Stupp(A)-3C and Stupp-treated cohorts ($N=10000$).}
    \label{fig:StuppC_3C_Stupp_A_3C_dynamics}
\end{figure}
Table~\ref{tab:combined_protocols} summarizes the population-level outcomes of the five combined treatment sequences. All combined CAR-T-Stupp protocols investigated here improve median OS
relative to Stupp alone, with median gains ranging from $0.72$ to $0.90$ months and HRs between approximately $1.04$ and $1.06$ for Stupp relative
to the corresponding combined protocol. Thus, the incorporation of CAR-T therapy produces a consistent, although modest, population-level survival benefit over standard
Stupp protocol.

In contrast, the differences among the five combined treatment sequences
are very small. Median OS varies by only $0.11$ months across the investigated schedules. The 3C--Stupp-3C strategy yields the largest
median OS numerically ($14.78$ months), but its advantage over the
other CAR-T-Stupp combined sequences is very modest. The KM analyses similarly yield closely comparable HRs
across protocols. Therefore, although CAR-T administration improves population-level outcome relative to Stupp, the present results do not
identify a clearly superior temporal ordering of the treatments at the
population level.

These findings suggest that the principal population-level effect arises
from the incorporation of CAR-T therapy itself rather than from the precise
ordering of the five treatment sequences considered here. However, population-level similarity does not imply that the protocols are equivalent
for every VP. The possibility that different treatment sequences may benefit different patient phenotypes will therefore be investigated below through within-patient analyses of individual therapeutic benefit.

\begin{table*}[ht!]
\centering
\small
\setlength{\tabcolsep}{4pt}
\caption{Population-level survival outcomes for the Stupp protocol
and the five CAR-T--Stupp treatment sequences in the common cohort of
$N=10000$ VPs. The survival gain is computed relative to Stupp alone.
Hazard ratios (HRs) are reported for each combined protocol relative to the Stupp protocol.}

\label{tab:combined_protocols}
\begin{tabular}{lcccc}
\hline
Protocol &
Median OS [95\% CI] (months) &
Gain vs Stupp (months) &
HR [95\% CI] &
$p$-value \\
\hline
Stupp
& 13.88 [13.57, 14.21]
& --
& --
& -- \\

3C-Stupp
& 14.60 [14.33, 14.98]
& 0.72
& 0.953 [0.925 - 0.981]
& 0.00136 \\

Stupp-3C
& 14.74 [14.40, 15.06]
& 0.86
& 0.96 [0.932 - 0.988]
& 0.00611 \\

3C-Stupp-3C
& 14.78 [14.45, 15.11]
& 0.90
& 0.949 [0.922 - 0.978]
& 0.000541 \\

Stupp(C)-3C-Stupp(A)
& 14.74 [14.39, 15.07]
& 0.86
& 0.949 [0.921 - 0.977]
& 0.000450 \\

Stupp(C)-3C-Stupp(A)-3C
& 14.74 [14.40, 15.08]
& 0.86
& 0.949 [0.922 - 0.978]
& 0.00053 \\
\hline
\end{tabular}
\end{table*}

\subsubsection{Exploratory virtual cohort: CAR-T sensitivity}

Having established that the five primary combined sequences yield similar population-level survival outcomes, we next investigate the robustness of these findings to variations in the administration schedule. Specifically, we examine the effects of modifying the infusion frequency, the total cellular dose, the temporal gaps between immunotherapy and the Stupp protocol, the spacing between consecutive injections, and the dose allocation across multiple treatment blocks. The purpose of this analysis is not to propose an entirely new protocol, but rather to determine whether population-level outcomes are highly sensitive to scheduling nuances, or if the modest benefits observed are robust across clinically plausible 
patterns. 

Our sensitivity analysis reveals that the survival benefit remains remarkably robust to moderate scheduling variations. Fine-tuning the number of infusions, dose spacing, or temporal alignment with chemoradiotherapy produces only limited changes in median OS, with the most noticeable fluctuations occurring when modifying the pre-Stupp schedules. In contrast, expanding the total pool of administered CAR-T cells provides a more consistent survival gain, though its magnitude remains modest. Overall, these results suggest that, within the explored parameter ranges, the primary population-level effect stems from the integration of CAR-T cells into the regimen itself, rather than from the fine optimization of their timing or fractionation. Detailed results for the different scheduling are provided in Appendix \ref{app:additional_sensitivity}

\paragraph{Number of CAR-T infusions and total dose.}
We first examine whether population-level outcomes depend on the scheduling and total number of infused CAR-T cells. For this purpose, the cumulative dose is distributed across one to ten administrations, considering immunotherapy either before or after the Stupp protocol, with total doses of $10^9$ and 
$2\times10^9$ cells.

When CAR-T therapy follows the Stupp protocol, increasing the number of infusions produces essentially no change in survival for a fixed total dose: median OS 
remains tightly constrained between $14.72$ and $14.74$ months for $10^9$ cells, 
and between $14.86$ and $14.88$ months for $2\times10^9$ cells.

Conversely, when CAR-T therapy precedes the Stupp protocol, a slightly stronger dependence on the number of injections is observed, though the overall magnitude remains limited. Here, median OS ranges from $14.57$ to $14.75$ months for $10^9$ cells, and from $14.69$ to $14.97$ months for $2\times10^9$ cells. In this pre-Stupp setting, median survival increases with fractionated dosing due to 
the cytotoxic effects of subsequent TMZ-RT during the concomitant phase. Splitting 
the total dose into multiple injections allows the CAR-T cells to exert therapeutic 
pressure more progressively and over a longer window.

Table \ref{tab:sensitivity_to_n_inj_tot_inj} summarizes the results (the complete numerical outcomes are reported in Appendix~\ref{app:number_cart_infusions}). In conclusion, while increasing the total CAR-T cell count yields an additional survival gain, fractionating a fixed total dose into more infusions has a 
population-level impact only when administered pre-Stupp.

\begin{table}[ht]
\centering
\caption{Summary of the sensitivity of median overall survival to the number of CAR-T infusions and the total administered CAR-T dose. CAR-T therapy is administered either before or after the Stupp protocol, considering between one and ten infusions ($nC$ with $n=1,\dots,10$) and total doses of $10^9$ or $2\times10^9$ cells.}\label{tab:sensitivity_to_n_inj_tot_inj}
\begin{tabular}{lcc}
\hline
Sequence & Total CAR-T & Median OS range (months) \\
\hline
Stupp--$n$C & $10^9$        & $14.72$--$14.74$ \\
$n$C--Stupp & $10^9$        & $14.49$--$14.69$ \\
Stupp--$n$C & $2\times10^9$ & $14.86$--$14.88$ \\
$n$C--Stupp & $2\times10^9$ & $14.59$--$14.92$ \\
\hline
\end{tabular}
\end{table}

\paragraph{Treatment spacing.}


We next examine the sensitivity of the combined protocols to two temporal parameters: the spacing between CAR-T therapy and the different phases of the Stupp protocol (denoted by $Gap_{SC}$), and the interval between consecutive CAR-T infusions (denoted by $Gap_C$). Specifically, we evaluate the 3C-Stupp, Stupp-3C, and Stupp(C)-3C-Stupp(A) sequences, varying $Gap_C$ across $7$, $14$, $21$, and $28$ days, while exploring $Gap_{SC}$ over a broader range from $7$ to $90$ days. The complete numerical outcomes are reported in Appendix~\ref{app:treatment_spacing}. 

The impact of treatment spacing is highly dependent on the scheduling of the CAR-T block. When immunotherapy precedes the Stupp protocol (3C-Stupp), wider spacing yields a modest increase in median OS across the tested intervals. Conversely, when CAR-T therapy follows the completion of the Stupp protocol (Stupp-3C), longer delays are progressively associated with lower median survival, with the reduction becoming more pronounced at higher $Gap_{SC}$ values. When CAR-T infusions are intercalated between the concomitant and adjuvant phases, median OS remains comparatively insensitive to moderate adjustments in both $Gap_C$ and $Gap_{SC}$. Thus, the influence of treatment spacing is protocol-dependent rather than uniform. For short-to-moderate separations, fluctuations in median OS remain generally bounded. The most favorable outcomes occur when extending both $Gap_C$ and $Gap_{SC}$ prior to the Stupp regimen, which serves to prolong the therapeutic window of the CAR-T cells. On the other hand, substantially delaying CAR-T administration post-Stupp triggers a more appreciable drop in survival, likely because highly vulnerable VPs succumb to the disease before receiving the full course of infusions. Overall, these findings underscore the robustness of the population-level benefit against moderate variations in CAR-T scheduling.

\paragraph{Distribution of CAR-T cell doses.}

Finally, we investigate split-course protocols where the CAR-T therapy is divided 
into two distinct blocks of three infusions each. Specifically, we evaluate the 
3C-Stupp-3C and Stupp(C)-3C-Stupp(A)-3C sequences, which position these blocks 
either around the entire Stupp protocol or within its internal phases. 
The complete results for the different dose distributions and treatment intervals 
are reported in Appendix~\ref{app:prepost_distribution}.

For the 3C-Stupp-3C protocol, varying the pre/post distribution of a total
CAR-T dose of $10^9$ cells and the corresponding treatment spacing yields median OS values between $14.59$ and $14.80$ months. Increasing the total CAR-T dose to $2\times10^9$ cells shifts this range modestly upwards, to approximately $14.70-15.00$ months. A similar pattern is
observed for Stupp(C)-3C-Stupp(A)-3C, for which median survival ranges
from approximately $14.50$ to $14.75$ months with $10^9$ CAR-T cells and
from $14.59$ to $14.95$ months with $2\times10^9$ cells.

Although the precise distribution of CAR-T therapy between the two blocks modifies the median outcome, these differences remain small at the
population level. Increasing the total CAR-T dose produces a somewhat more consistent improvement than redistributing a fixed dose between treatment blocks. Moreover, longer temporal separations generally do not improve median survival and, in several configurations, are associated with a slightly lower outcome. Overall, these results indicate that the population-level benefit of the combined treatment is relatively robust to
moderate changes in the distribution and timing of CAR-T administration.

\subsubsection{Exploratory virtual cohort: heterogeneity of CAR-T benefit}
\label{sec:individual_benefit}

The population-level analyses above show that the five CAR-T-Stupp
protocols produce similar median OS outcomes, with a modest overall
improvement relative to Stupp alone. However, similarity at the population
level does not imply that the therapeutic benefit is equally distributed
across VPs. In particular, a modest change in median OS
may coexist with substantial benefit in a subset of patients and little or
even no benefit in others.

The matched design of the virtual population allows for this heterogeneity to be investigated directly. Since each VP is simulated under Stupp and under each of the five combined protocols while retaining exactly the
same biological parameters and initial conditions, treatment outcomes can be
compared individually. For each VP $i$ and combined protocol $P$, we define the absolute survival benefit as
\begin{equation}
\Delta_i^P=T_{s,i}^{P}-T_{s,i}^{\mathrm{Stupp}},
\label{eq:absolute_benefit}
\end{equation}
where $T_{s,i}^{P}$ and $T_{s,i}^{\mathrm{Stupp}}$ denote the survival times
of VP $i$ under protocol $P$ and under the Stupp
protocol, respectively. Thus, $\Delta_i^P>0$ indicates an individual
survival benefit from adding CAR-T therapy, whereas $\Delta_i^P<0$ indicates
that the corresponding combined protocol performs worse than Stupp for that
VP.

Since a given absolute survival extension carries different weight depending on a patient's baseline prognosis, we also consider the
multiplicative survival benefit
\begin{equation}
R_i^P=
\frac{T_{s,i}^{P}}{T_{s,i}^{\mathrm{Stupp}}},
\label{eq:relative_benefit}
\end{equation}
for which $R_i^P>1$ indicates improved survival relative to Stupp,
$R_i^P=1$ indicates no change, and $R_i^P<1$ indicates a poorer outcome.
The two measures provide complementary information: $\Delta_i^P$ quantifies
the absolute survival extension, whereas $R_i^P$ expresses the same change
relative to the survival achieved under standard chemoradiotherapy.

\paragraph{Distribution of individual benefit.}

Figure \ref{fig:benefit_distributions} shows the distributions of the
absolute survival benefit $\Delta_i^P$ for the five combined protocols.
Although the population-level median survival gains reported above are
smaller than one month, the individual-level distributions are markedly
heterogeneous. A large proportion of VPs experience relatively
small survival extensions, whereas a substantially smaller subgroup exhibits
much larger benefits, producing a pronounced right tail in the
distributions. Thus, the modest population-level effect of adding CAR-T
therapy does not arise from an identical response across the virtual
population, but instead masks considerable inter-patient variability in the
magnitude of therapeutic benefit.

\begin{figure}[ht!]
    \centering
    \includegraphics[width=\textwidth]{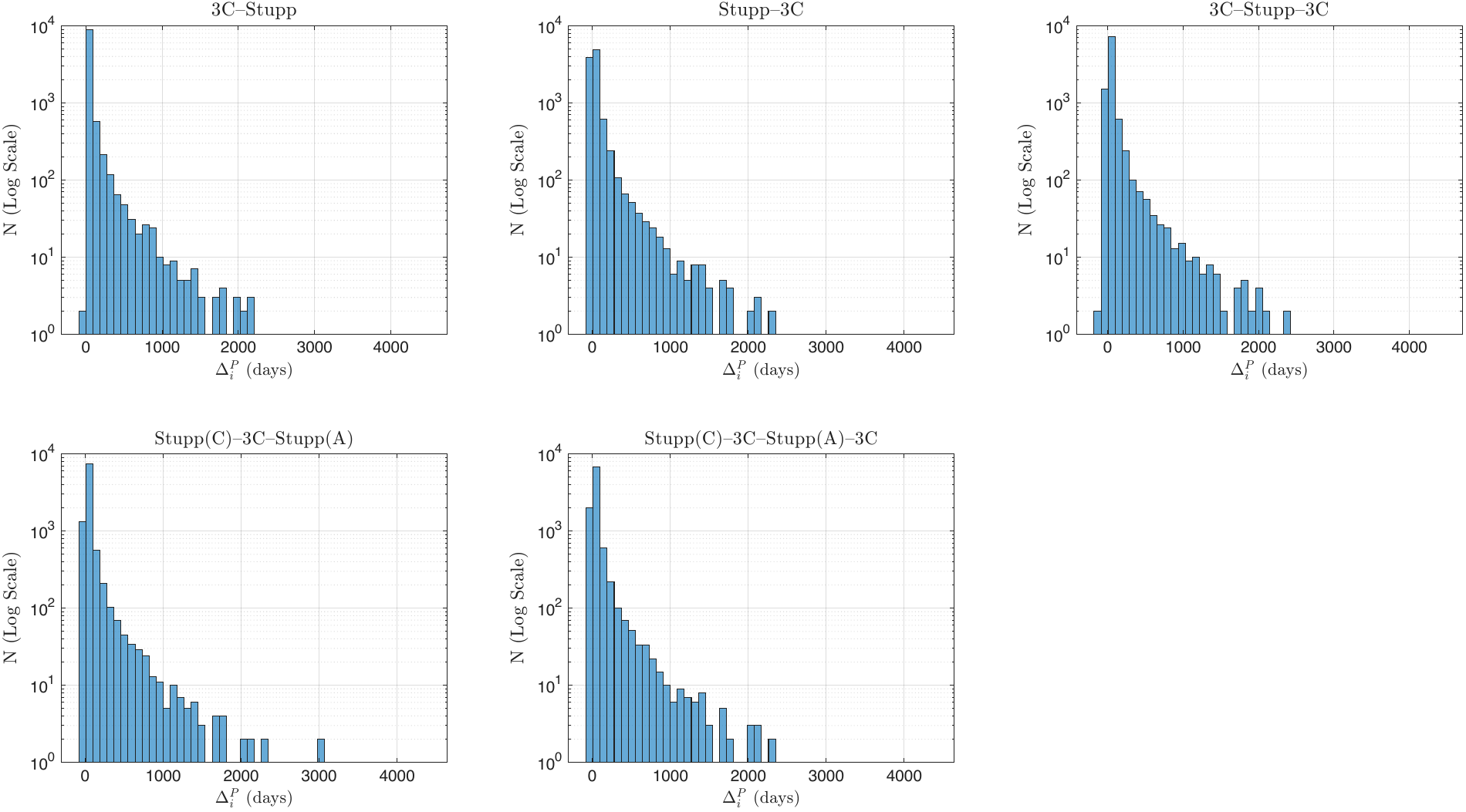}
\caption{Distribution of the absolute individual survival benefit
$\Delta_i^P=T_{s,i}^{P}-T_{s,i}^{\mathrm{Stupp}}$ across the five
CAR-T--Stupp protocols in the common cohort of $N=10000$ VPs.} 
\label{fig:benefit_distributions}

\end{figure}

\begin{figure}[ht!]
    \centering
    \includegraphics[width=\textwidth]{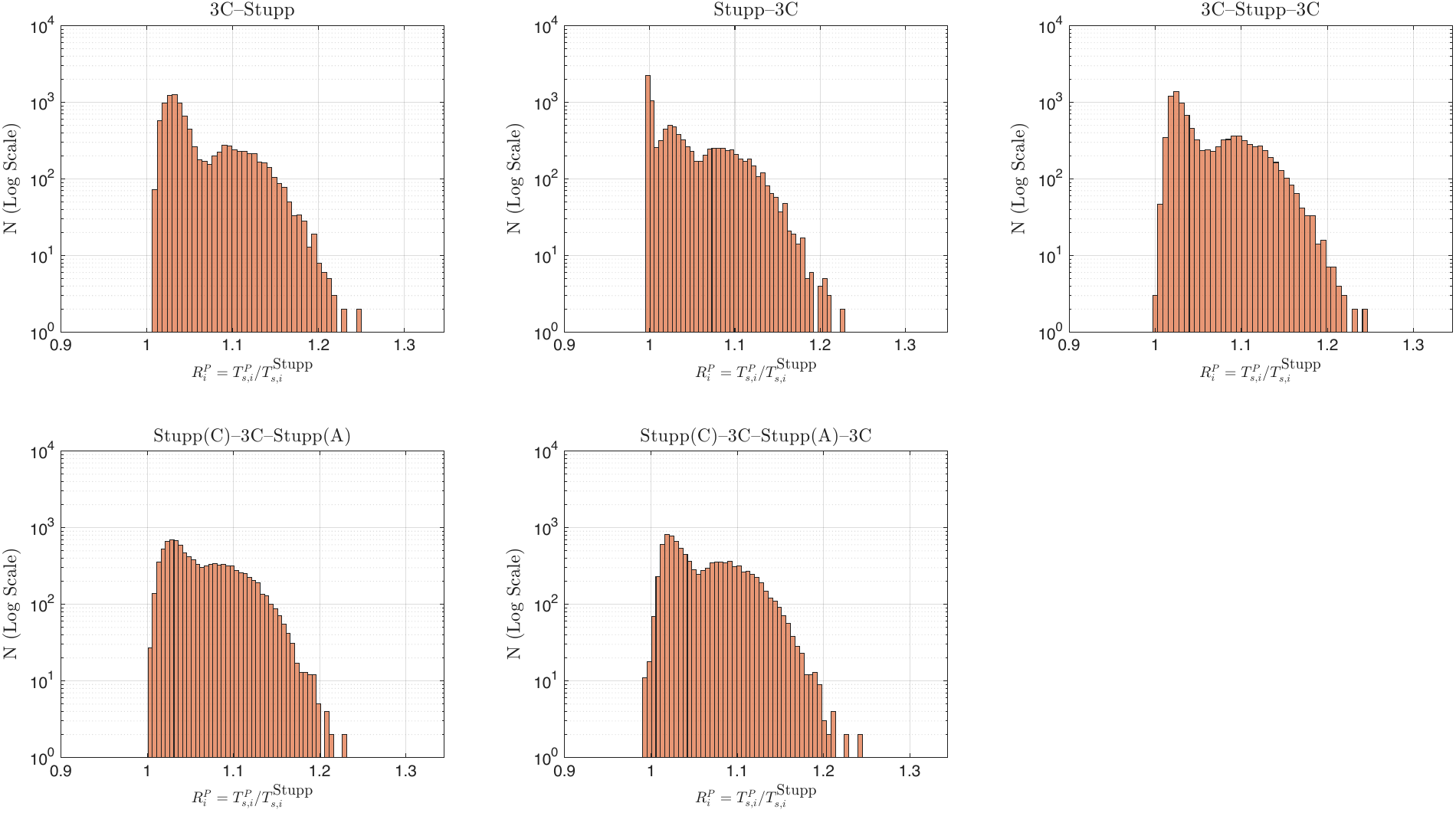}
\caption{Distribution of the multiplicative survival benefit
$R_i^P=T_{s,i}^{P}/T_{s,i}^{\mathrm{Stupp}}$ across the five
CAR-T--Stupp protocols in the common cohort of $N=10000$ VPs.}
    \label{fig:multiplicative_benefit_distributions}
\end{figure}

To further quantify the heterogeneity observed in the benefit distributions,
we calculated the proportion of VPs reaching two descriptive
thresholds of absolute survival extension. As shown in
Table~\ref{tab:absolute_benefit_thresholds}, approximately $19$--$22\%$ of VPs achieve an absolute survival extension greater than $60$ days, whereas only about $3\%$ achieve an extension greater than one year. These thresholds are used here as descriptive measures of the upper tail of the benefit distribution rather than as clinically validated response
criteria.

\begin{table}[htpb]
    \caption{Number and percentage of VPs ($N=10000$) exhibiting absolute survival extensions larger than $60$ days and $365$ days under each combined protocol relative to the Stupp protocol.}
    \label{tab:absolute_benefit_thresholds}
    \centering
    \small
    \begin{ruledtabular}
    \begin{tabular}{lcccc}
        \multirow{2}{*}{\textbf{Protocol ($P$)}} & \multicolumn{2}{c}{$\Delta_i^P > 60\text{ days}$} & \multicolumn{2}{c}{$\Delta_i^P > 365\text{ days}$} \\
        \cline{2-3} \cline{4-5}
        & $N_{\text{resp}}$ & $\%$ & $N_{\text{resp}}$ & $\%$ \\
        \hline
        3C--Stupp                   & 1828 & 18.28 & 290 & 2.90 \\
        Stupp--3C                   & 2127 & 21.27 & 307 & 3.07 \\
        3C--Stupp--3C               & 2126 & 21.26 & 311 & 3.11 \\
        Stupp(C)--3C--Stupp(A)      & 1940 & 19.40 & 285 & 2.85 \\
        Stupp(C)--3C--Stupp(A)--3C  & 2069 & 20.69 & 299 & 2.99 \\
    \end{tabular}
    \end{ruledtabular}
\end{table}

Absolute survival extensions provide an intuitive measure of treatment
benefit, but their magnitude may partly depend on less aggressive baseline tumor kinetics. We therefore also quantify individual
benefit in relative terms using the multiplicative survival ratio $R_i^P$
defined above. Table~\ref{tab:relative_benefit_thresholds} reports the proportion of VPs whose survival increases by more than $5\%$, $10\%$, $20\%$, and $30\%$ under each combined protocol relative to Stupp.

\begin{table*}[t!]
    \caption{Number and percentage of VPs ($N=10000$) achieving relative survival improvements greater than $5\%$, $10\%$, $20\%$, and $30\%$
under each combined protocol with respect to the Stupp protocol.}
    \label{tab:relative_benefit_thresholds}
    \centering
    \small
    \begin{ruledtabular}
    \begin{tabular}{lcccccccc}
        \multirow{2}{*}{\textbf{Protocol ($P$)}} & \multicolumn{2}{c}{$R_i^P > 1.05$} & \multicolumn{2}{c}{$R_i^P > 1.10$} & \multicolumn{2}{c}{$R_i^P > 1.20$} & \multicolumn{2}{c}{$R_i^P > 1.30$} \\
        \cline{2-3} \cline{4-5} \cline{6-7} \cline{8-9}
        & $N_{\text{resp}}$ & \% & $N_{\text{resp}}$ & \% & $N_{\text{resp}}$ & \% & $N_{\text{resp}}$ & \% \\
        \hline
        3C--Stupp                      & 3995 & 39.95 & 2033 & 20.33 &   24 &  0.24 &    0 &  0.00 \\
        Stupp--3C                      & 3616 & 36.16 & 1472 & 14.72 &   15 &  0.15 &    0 &  0.00 \\
        3C--Stupp--3C                  & 4690 & 46.90 & 2280 & 22.80 &   24 &  0.24 &    0 &  0.00 \\
        Stupp(C)--3C--Stupp(A)         & 5508 & 55.08 & 2199 & 21.99 &   10 &  0.10 &    0 &  0.00 \\
        Stupp(C)--3C--Stupp(A)--3C     & 5301 & 53.01 & 2225 & 22.25 &   12 &  0.12 &    0 &  0.00 \\
    \end{tabular}
    \end{ruledtabular}
\end{table*}

Approximately $36-55\%$ of VPs achieve a relative survival improvement greater than $5\%$, whereas about $15-24\%$ exceed a $10\%$ improvement. By contrast, relative gains above $20\%$ are very uncommon, and no VP achieves an improvement greater than $30\%$ in any of the investigated protocols.

\paragraph{Model parameters associated with CAR-T benefit.}

Having established that individual survival gains are highly heterogeneous 
across the virtual population, we next investigate which patient-specific 
parameters drive a larger benefit from CAR-T therapy. Because the exact 
same VPs are tracked across all regimens, these associations reflect 
divergences in therapeutic return within matched individuals, rather than 
variability between independently generated cohorts.

We first examine the absolute survival benefit $\Delta_i^P$. 
Figures~\ref{fig:scatter_3C_Stupp}--\ref{fig:scatter_StuppC_3C_StuppA_3C} 
plot $\Delta_i^P$ against the tumor proliferation rate $r_1$, the initial 
CAR-T-resistant fraction $\delta_2$, and the tumor-mediated CAR-T inactivation 
parameter $\rho_4$ for each combination protocol. Rather than simply picking the 
three strongest univariate correlations, we selected these specific parameters to 
illustrate three complementary determinants of response: intrinsic tumor 
growth ($r_1$), pre-existing immune resistance ($\delta_2$), and tumor-mediated CAR-T suppression ($\rho_4$).

\begin{figure}[ht!]
        \centering
        \includegraphics[width=\textwidth]{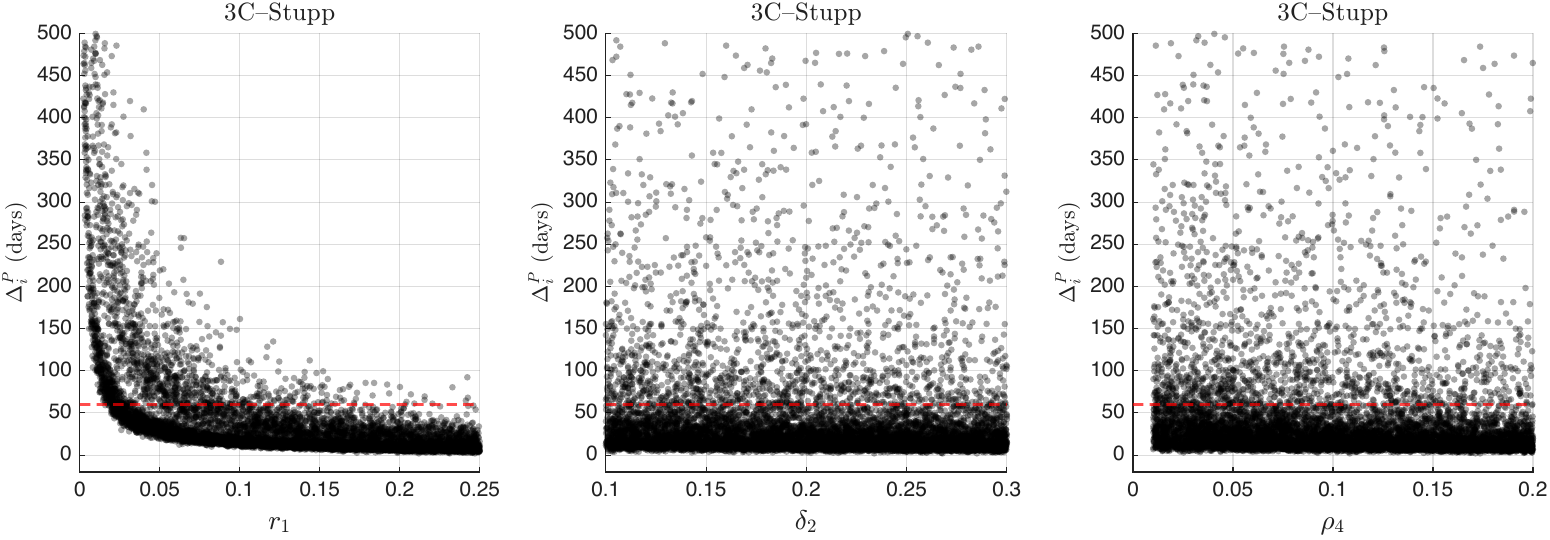}
        \caption{Individual benefit $(\Delta_i^P)$ versus $r_1$, $\delta_2$ and $\rho_4$ for the 3C--Stupp protocol.}
        \label{fig:scatter_3C_Stupp}
    \end{figure}
    
    \begin{figure}[ht!]
        \centering
        \includegraphics[width=\textwidth]{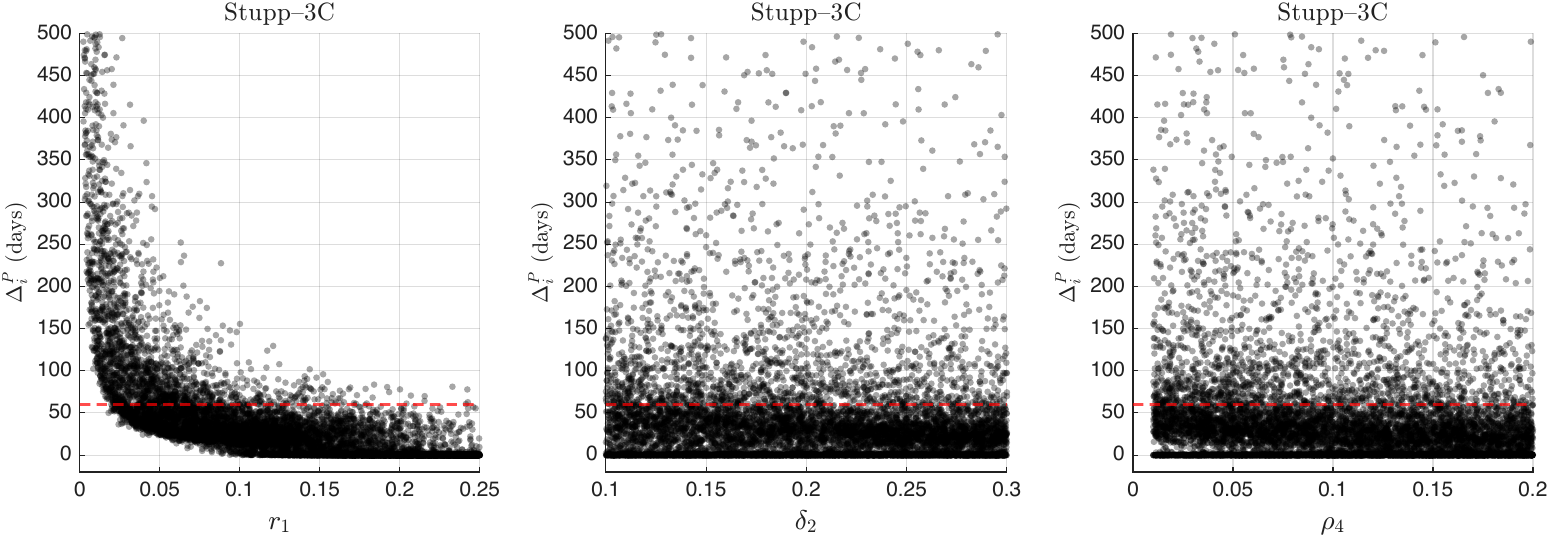}
        \caption{Individual benefit $(\Delta_i^P)$ versus $r_1$, $\delta_2$ and $\rho_4$ for the Stupp--3C protocol.}
        \label{fig:scatter_Stupp_3C}
    \end{figure}
    
    \begin{figure}[ht!]
        \centering
        \includegraphics[width=\textwidth]{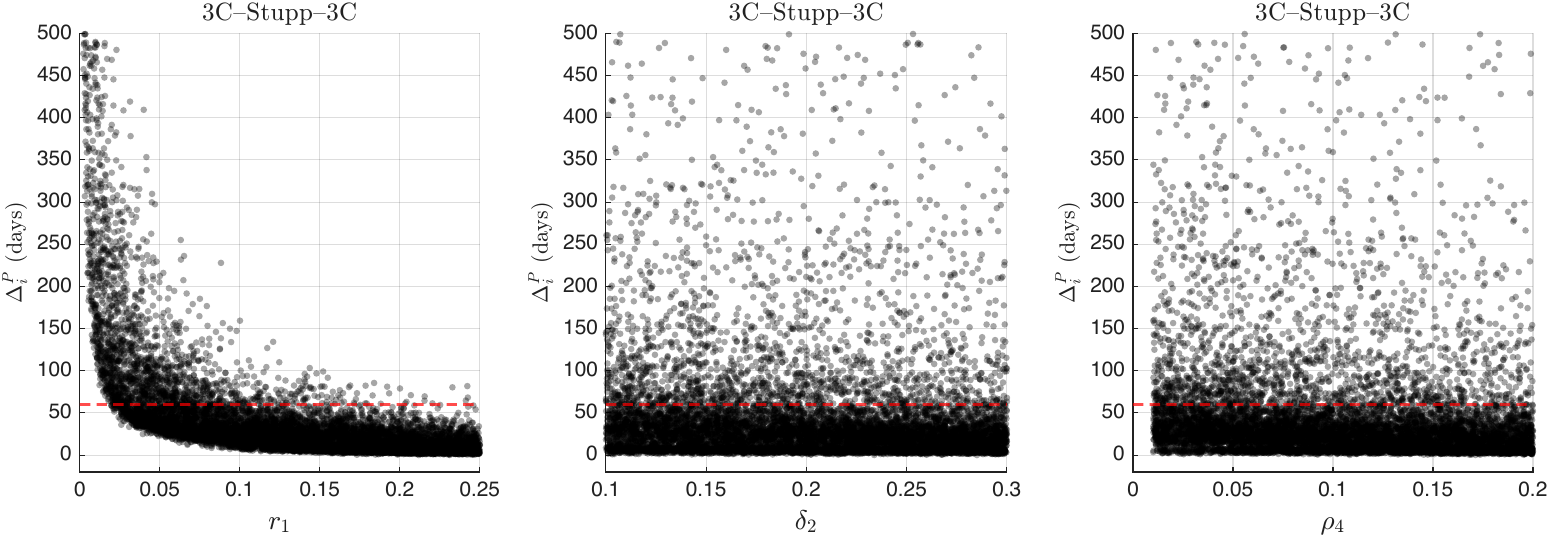}
        \caption{Individual benefit $(\Delta_i^P)$ versus $r_1$, $\delta_2$ and $\rho_4$ for the 3C--Stupp--3C protocol.}
        \label{fig:scatter_3C_Stupp_3C}
    \end{figure}
    
    \begin{figure}[ht!]
        \centering
        \includegraphics[width=\textwidth]{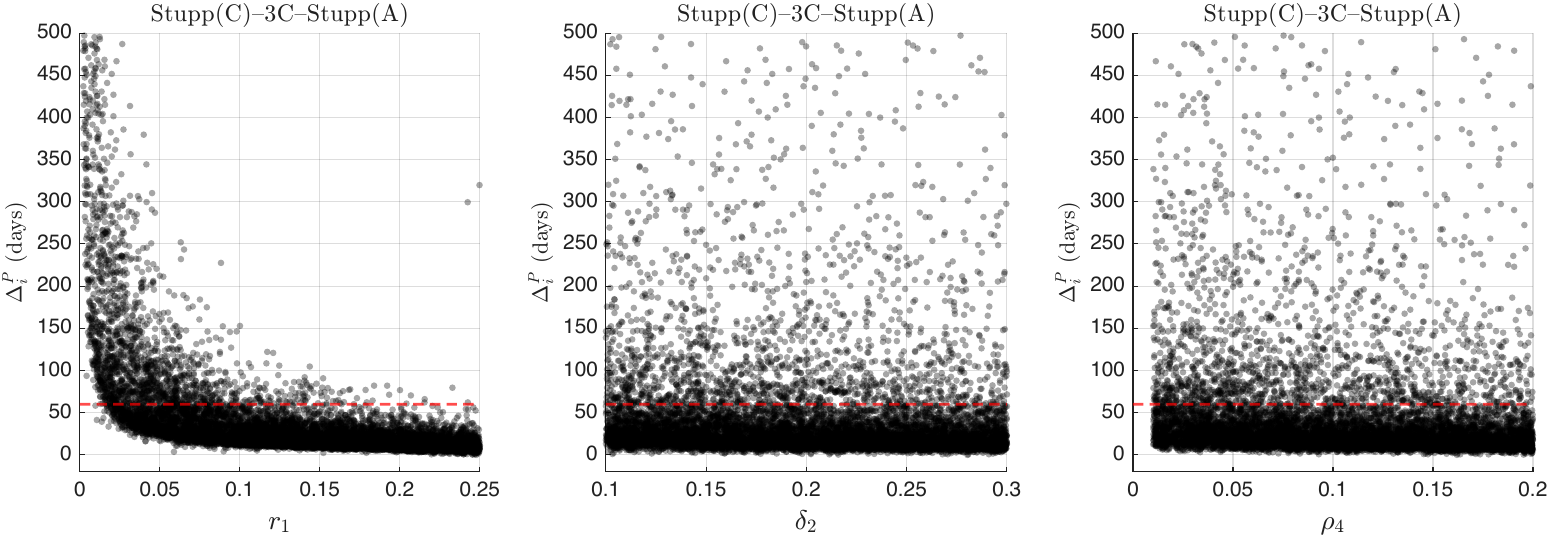}
        \caption{Individual benefit $(\Delta_i^P)$ versus $r_1$, $\delta_2$ and $\rho_4$ for the Stupp(C)--3C--Stupp(A) protocol.}
        \label{fig:scatter_StuppC_3C_StuppA}
    \end{figure}
    
    \begin{figure}[ht!]
        \centering
        \includegraphics[width=\textwidth]{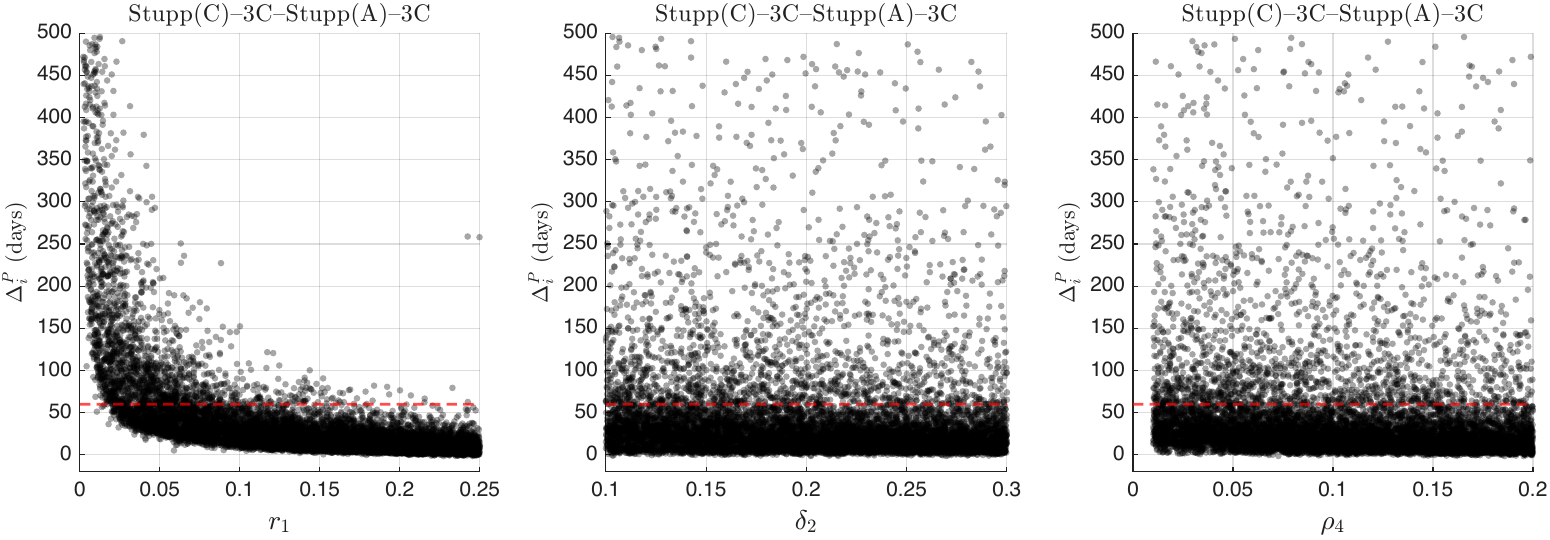}
        \caption{Individual benefit $(\Delta_i^P)$ versus $r_1$, $\delta_2$ and $\rho_4$ for the Stupp(C)--3C--Stupp(A)--3C protocol.}
        \label{fig:scatter_StuppC_3C_StuppA_3C}
    \end{figure}

A consistent qualitative pattern is observed across treatment sequences.
Larger absolute survival gains are preferentially obtained for VPs with lower tumor proliferation rates $r_1$, lower initial CAR-T-resistant fractions $\delta_2$, and lower values of $\rho_4$.
The association with $r_1$ is particularly pronounced, suggesting that
the incremental benefit of CAR-T therapy over Stupp is greatest in tumors
with slower intrinsic proliferative dynamics.
\begin{table}[ht!]
    \caption{Top five parameter groups showing the strongest Spearman rank correlations ($\rho$) with the additive individual benefit $\Delta_i^P$ for each combined protocol (excluding $\tau$). Parameters are ordered by absolute correlation magnitude.}
    \label{tab:spearman_benefit}
    \centering
    \small
    \begin{ruledtabular}
        \begin{tabular}{llrr}
            \textbf{Protocol} & \textbf{Parameter} & \textbf{$\rho$} & \textbf{$p$-value} \\
            \hline
            \multirow{5}{*}{3C--Stupp}
            & $r_1, r_2$      & $-0.7606$ & $<0.001$ \\
            & $\rho_2,\rho_3$ & $0.3606$  & $<0.001$ \\
            & $\rho_4$        & $-0.2913$ & $<0.001$ \\
            & $\delta_2$      & $-0.1802$ & $<0.001$ \\
            & $\beta_2$       & $0.1286$  & $<0.001$ \\
            \hline
            \multirow{5}{*}{Stupp--3C}
            & $r_1,r_2$       & $-0.8450$ & $<0.001$ \\
            & $\text{Ki-67}$  & $-0.2209$ & $<0.001$ \\
            & $\rho_4$        & $-0.1838$ & $<0.001$ \\
            & $\rho_2,\rho_3$ & $0.1744$  & $<0.001$ \\
            & $\delta_2$      & $-0.0993$ & $<0.001$ \\
            \hline
            \multirow{5}{*}{3C--Stupp--3C}
            & $r_1,r_2$       & $-0.7860$ & $<0.001$ \\
            & $\rho_2,\rho_3$ & $0.3387$  & $<0.001$ \\
            & $\rho_4$        & $-0.2800$ & $<0.001$ \\
            & $\delta_2$      & $-0.1647$ & $<0.001$ \\
            & $\text{Ki-67}$  & $-0.1140$ & $<0.001$ \\
            \hline
            \multirow{5}{*}{Stupp(C)--3C--Stupp(A)}
            & $r_1, r_2$      & $-0.7467$ & $<0.001$ \\
            & $\rho_2,\rho_3$ & $0.3098$  & $<0.001$ \\
            & $\rho_4$        & $-0.2756$ & $<0.001$ \\
            & $\delta_2$      & $-0.2159$ & $<0.001$ \\
            & $\rho_1$        & $-0.1757$ & $<0.001$ \\
            \hline
            \multirow{5}{*}{Stupp(C)--3C--Stupp(A)--3C}
            & $r_1,r_2$       & $-0.7693$ & $<0.001$ \\
            & $\rho_2,\rho_3$ & $0.3334$  & $<0.001$ \\
            & $\rho_4$        & $-0.2815$ & $<0.001$ \\
            & $\delta_2$      & $-0.1920$ & $<0.001$ \\
            & $\rho_1$        & $-0.1267$ & $<0.001$ \\
        \end{tabular}
    \end{ruledtabular}
\end{table}Spearman rank-correlation analysis confirms these trends
(Table~\ref{tab:spearman_benefit}). The tumor proliferation rate $r_1$ (and hence $r_2=r_1/2$ in the reference VP cohort) shows the strongest association with individual benefit in all
five combined protocols, with correlation coefficients ranging from
approximately $-0.75$ to $-0.85$. Thus, slower-growing tumors consistently exhibit larger
absolute survival gains from the addition of CAR-T therapy.

CAR-T-related parameters also contribute substantially to the variability
in benefit. Higher values of the CAR-T expansion parameter $\rho_2$ are
generally associated with larger survival gains, whereas larger values of
the tumor-mediated CAR-T inactivation parameter $\rho_4$ and a larger
initial CAR-T-resistant fraction $\delta_2$ are associated with smaller
benefits. Although the precise ranking of the remaining parameters varies
among treatment sequences, these results identify a consistent responder
profile characterized by slower tumor proliferation, stronger CAR-T
expansion, reduced tumor-mediated CAR-T loss, and a smaller CAR-T-resistant
tumor fraction.

Because the magnitude of an absolute survival extension may depend on the
survival time already achieved under Stupp, we next examine whether the
same parameter-response relationships are observed when treatment benefit
is expressed in relative terms using $R_i^P$.

\begin{figure}[ht!]
    \centering
    \includegraphics[width=\textwidth]{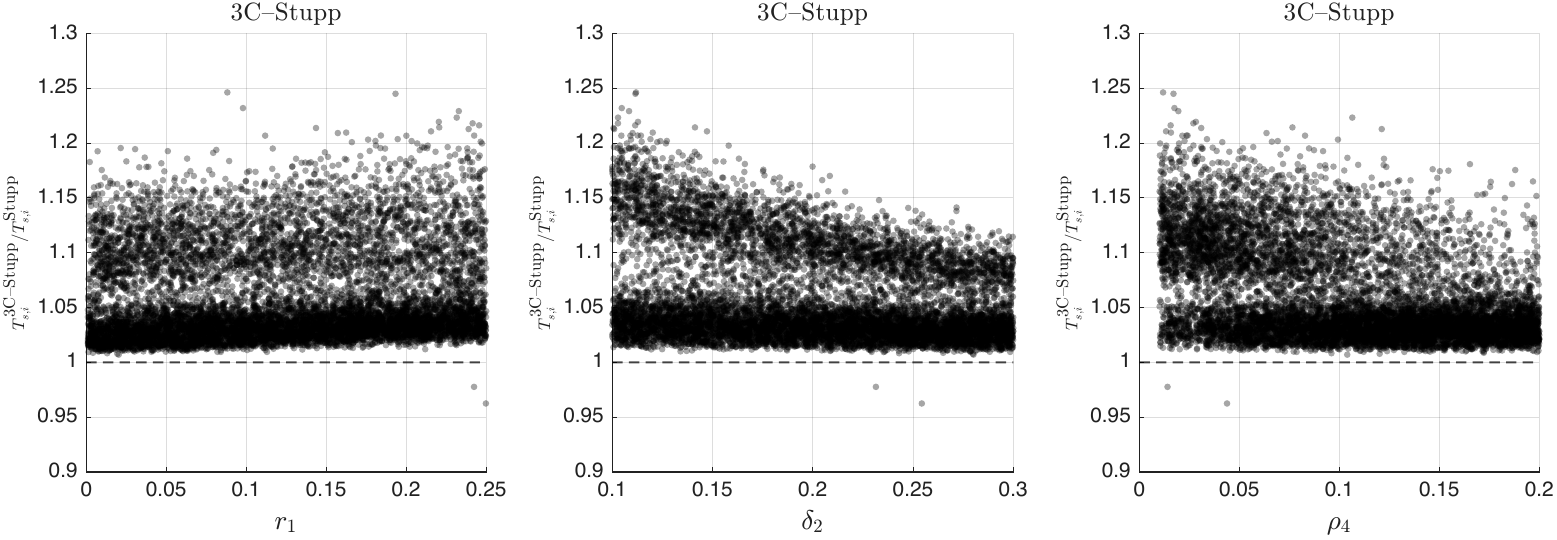}
    \caption{Multiplicative survival benefit ($T_{s,i}^{\text{3C--Stupp}} / T_{s,i}^{\text{Stupp}}$) versus tumor proliferation rate ($r_1$), CAR-T resistant fraction ($\delta_2$), and tumor-mediated CAR-T inactivation rate ($\rho_4$). The dashed line indicates a ratio of 1 (no relative benefit).}
    \label{fig:multiplicative_scatter_3C_Stupp}
\end{figure}

\begin{figure}[ht!]
    \centering
    \includegraphics[width=\textwidth]{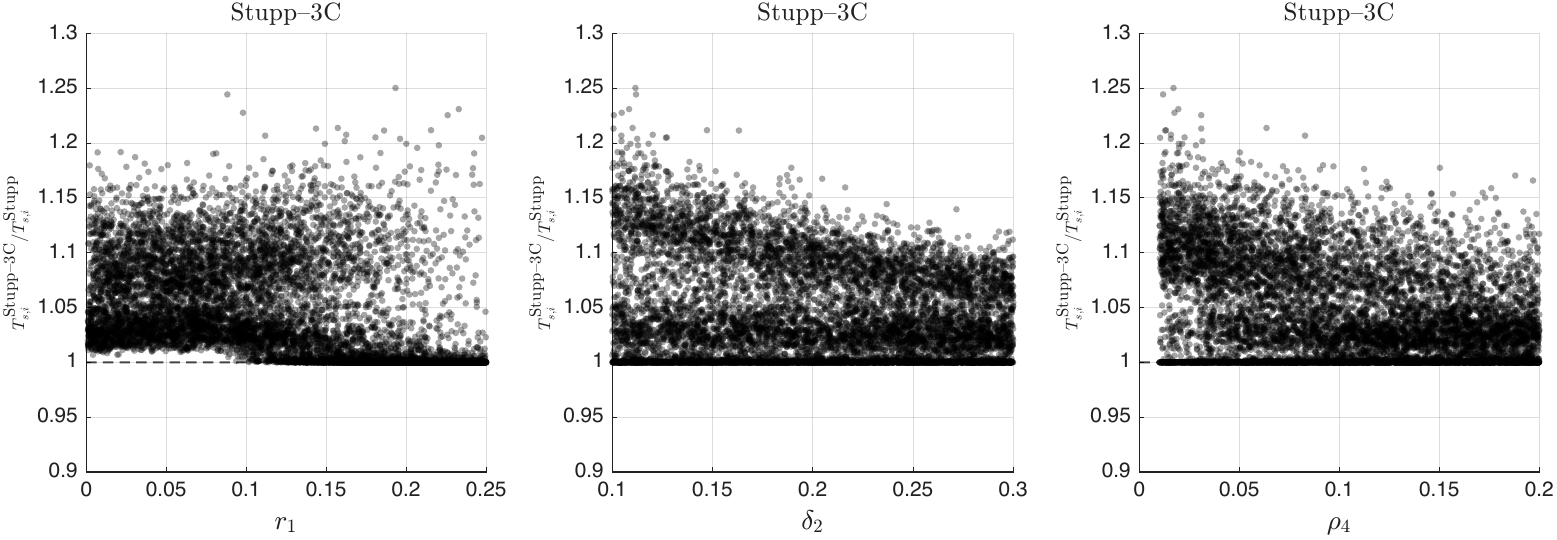}
    \caption{Multiplicative survival benefit ($T_{s,i}^{\text{Stupp--3C}} / T_{s,i}^{\text{Stupp}}$) versus $r_1$ (left), $\delta_2$ (center), and $\rho_4$ (right).}
    \label{fig:multiplicative_scatter_Stupp_3C}
\end{figure}

\begin{figure}[ht!]
    \centering
    \includegraphics[width=\textwidth]{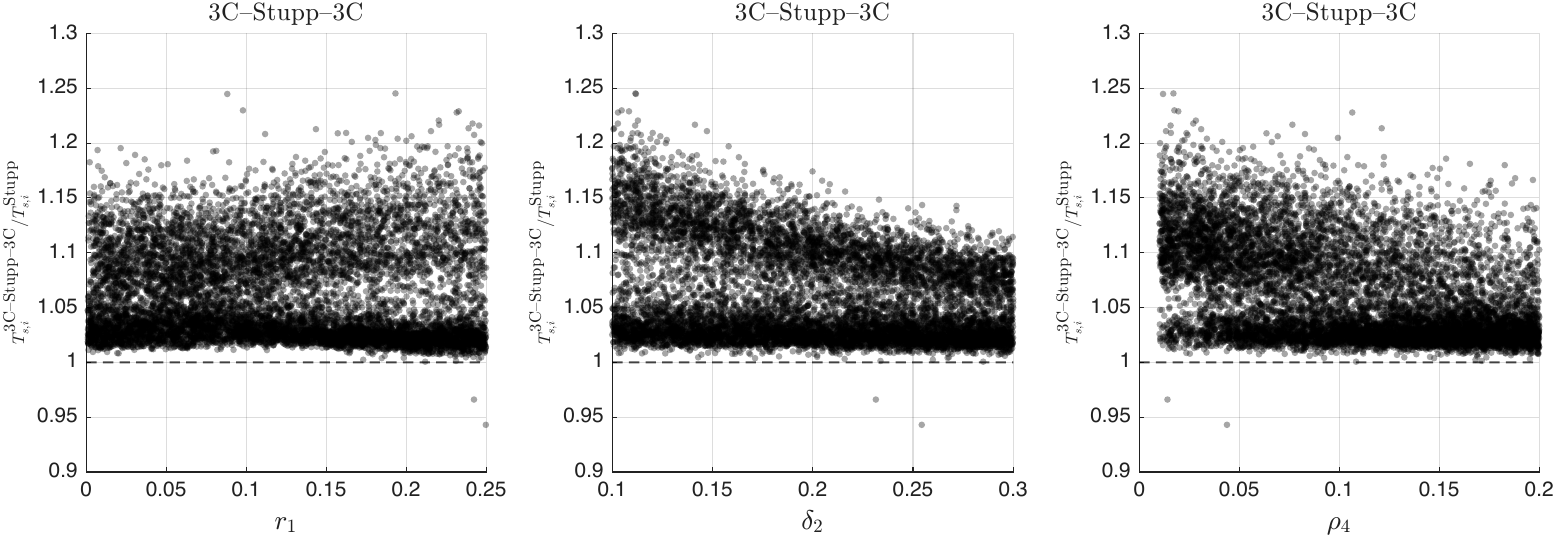}
    \caption{Multiplicative survival benefit ($T_{s,i}^{\text{3C--Stupp--3C}} / T_{s,i}^{\text{Stupp}}$) versus $r_1$ (left), $\delta_2$ (center), and $\rho_4$ (right).}
    \label{fig:multiplicative_scatter_3C_Stupp_3C}
\end{figure}

\begin{figure}[ht!]
    \centering
    \includegraphics[width=\textwidth]{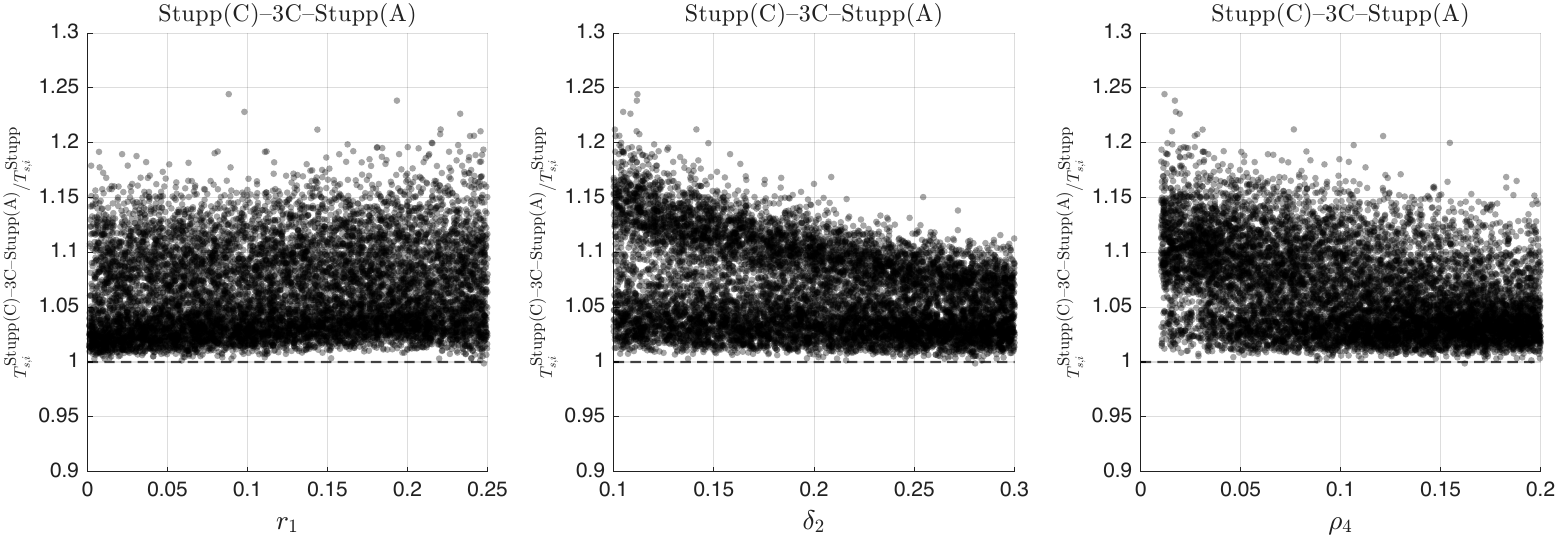}
    \caption{Multiplicative survival benefit ($T_{s,i}^{\text{Stupp(C)--3C--Stupp(A)}} / T_{s,i}^{\text{Stupp}}$) versus $r_1$ (left), $\delta_2$ (center), and $\rho_4$ (right).}
    \label{fig:multiplicative_scatter_StuppC_3C_StuppA}
\end{figure}

\begin{figure}[ht!]
    \centering
    \includegraphics[width=\textwidth]{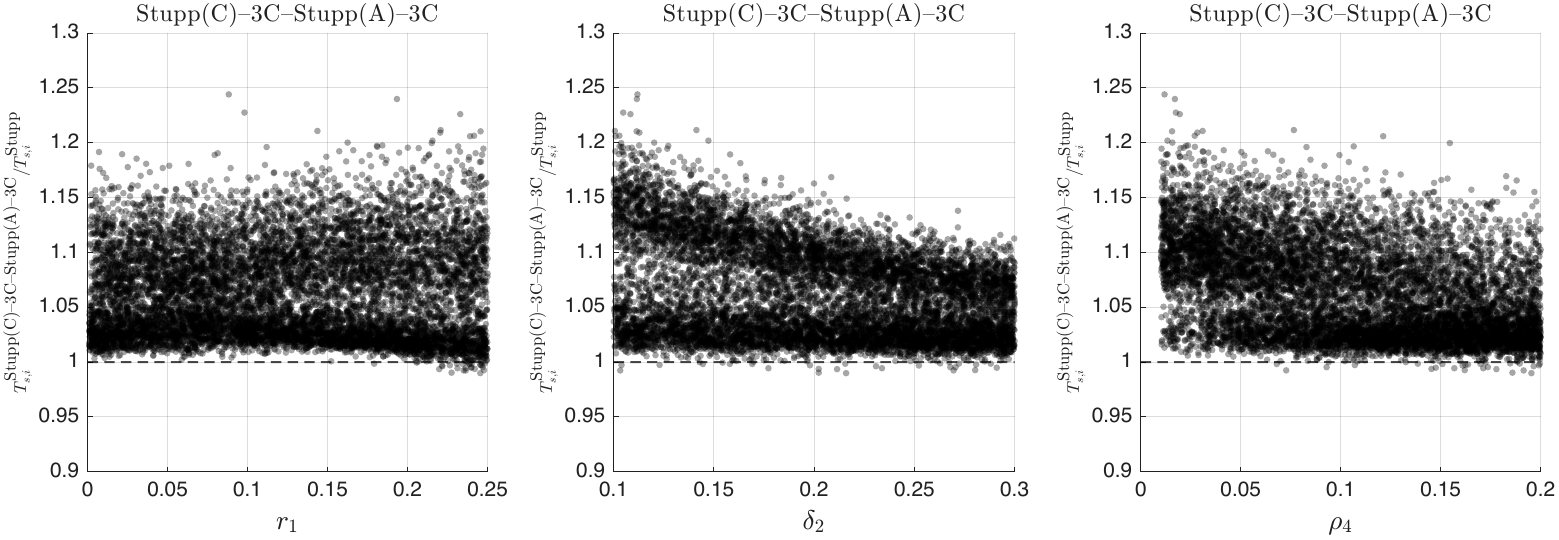}
    \caption{Multiplicative survival benefit ($T_{s,i}^{\text{Stupp(C)--3C--Stupp(A)--3C}} / T_{s,i}^{\text{Stupp}}$) versus $r_1$ (left), $\delta_2$ (center), and $\rho_4$ (right).}
    \label{fig:multiplicative_scatter_StuppC_3C_StuppA_3C}
\end{figure}

Figure \ref{fig:multiplicative_scatter_3C_Stupp} shows the relative survival benefit for the 3C-Stupp
protocol. After normalization by survival under Stupp, the dependence
on $r_1$ becomes considerably less pronounced, whereas increasing $\delta_2$ or $\rho_4$ is associated with a smaller relative benefit of treatment benefit.

Figure \ref{fig:multiplicative_scatter_Stupp_3C} shows the corresponding relative benefit for the Stupp-3C protocol. Again, the dependence on $r_1$ is substantially weaker than in the absolute-benefit analysis, whereas increasing $\delta_2$ or $\rho_4$ is associated with a progressive reduction in the relative survival gain.

A similar pattern is observed for the 3C-Stupp-3C protocol (Fig. \ref{fig:multiplicative_scatter_3C_Stupp_3C}). The relative benefit remains only weakly dependent on
$r_1$, while lower CAR-T resistance and weaker tumor-mediated CAR-T
inactivation are associated with larger improvements over Stupp.

Figure \ref{fig:multiplicative_scatter_StuppC_3C_StuppA} shows the results for the
Stupp(C)-3C-Stupp(A) sequence. The normalization again attenuates the dependence on tumor proliferation, whereas the decrease in relative benefit with increasing $\delta_2$ and $\rho_4$ remains clearly
visible.

Finally, the Stupp(C)-3C-Stupp(A)-3C protocol exhibits the same qualitative behavior (Fig. \ref{fig:multiplicative_scatter_StuppC_3C_StuppA_3C}), with a comparatively weak dependence
on $r_1$ and decreasing relative benefit as either $\delta_2$ or
$\rho_4$ increases.

Taken together, Figs. \ref{fig:multiplicative_scatter_3C_Stupp} -\ref{fig:multiplicative_scatter_StuppC_3C_StuppA_3C} show that the strong association between
$r_1$ and the absolute survival gain is substantially reduced when
benefit is normalized by survival under Stupp. In contrast, the
associations with $\delta_2$ and $\rho_4$ persist across all treatment
sequences, supporting the role of pre-existing CAR-T resistance and
tumor-mediated CAR-T inactivation as determinants of the relative
benefit obtained from adding CAR-T therapy.

Finally, to distinguish associations with differential CAR-T benefit from
general prognostic effects, we examine the correlations between model
parameters and absolute survival time $T_s$ under each treatment protocol (see Table \ref{tab:spearman_correlation_raw_tsurv}).

\begin{table}[ht!]
    \caption{Top five parameter groups showing the strongest Spearman rank correlations ($\rho$) with the survival times $T_{s}$ for each combined protocol. Parameters are ordered by absolute correlation magnitude.}
    \label{tab:spearman_correlation_raw_tsurv}
    \centering
    \small
    \begin{ruledtabular}
        \begin{tabular}{llrr}
            \textbf{Protocol} & \textbf{Parameter} & \textbf{$\rho$} & \textbf{$p$-value} \\
            \hline
            \multirow{5}{*}{Stupp}
            & $r_1,r_2$       & $-0.8891$ & $<0.001$ \\
            & \text{Ki-67}    & $-0.3887$ & $<0.001$ \\
            & $\beta_2$       & $-0.1488$ & $<0.001$ \\
            & $\alpha$        &  $0.0788$ & $<0.001$ \\
            & $\beta$         &  $0.0723$ & $<0.001$ \\
            \hline
            \multirow{5}{*}{3C--Stupp}
            & $r_1,r_2$       & $-0.8932$ & $<0.001$ \\
            & \text{Ki-67}    & $-0.3806$ & $<0.001$ \\
            & $\beta_2$       & $-0.1400$ & $<0.001$ \\
            & $\alpha$        &  $0.0742$ & $<0.001$ \\
            & $\beta$         &  $0.0681$ & $<0.001$ \\
            \hline
            \multirow{5}{*}{Stupp--3C}
            & $r_1,r_2$       & $-0.8937$ & $<0.001$ \\
            & \text{Ki-67}    & $-0.3792$ & $<0.001$ \\
            & $\beta_2$       & $-0.1387$ & $<0.001$ \\
            & $\alpha$        &  $0.0777$ & $<0.001$ \\
            & $\beta$         &  $0.0708$ & $<0.001$ \\
            \hline
            \multirow{5}{*}{3C--Stupp--3C}
            & $r_1,r_2$       & $-0.8935$ & $<0.001$ \\
            & \text{Ki-67}    & $-0.3797$ & $<0.001$ \\
            & $\beta_2$       & $-0.1390$ & $<0.001$ \\
            & $\alpha$        &  $0.0747$ & $<0.001$ \\
            & $\beta$         &  $0.0686$ & $<0.001$ \\
            \hline
            \multirow{5}{*}{Stupp(C)--3C--Stupp(A)}
            & $r_1,r_2$       & $-0.8937$ & $<0.001$ \\
            & \text{Ki-67}    & $-0.3804$ & $<0.001$ \\
            & $\beta_2$       & $-0.1395$ & $<0.001$ \\
            & $\alpha$        &  $0.0753$ & $<0.001$ \\
            & $\beta$         &  $0.0692$ & $<0.001$ \\
            \hline
            \multirow{5}{*}{Stupp(C)--3C--Stupp(A)--3C}
            & $r_1,r_2$       & $-0.8941$ & $<0.001$ \\
            & \text{Ki-67}    & $-0.3792$ & $<0.001$ \\
            & $\beta_2$       & $-0.1382$ & $<0.001$ \\
            & $\alpha$        &  $0.0758$ & $<0.001$ \\
            & $\beta$         &  $0.0695$ & $<0.001$ \\
        \end{tabular}
    \end{ruledtabular}
\end{table}

\subsubsection{Exploratory virtual cohort: patient-specific protocol}
\label{sec:personalization}

The substantial heterogeneity in individual treatment benefit raises a
natural question: whether the population-level outcome could be improved by
selecting the treatment sequence separately for each VP rather
than applying the same protocol uniformly to the entire cohort. To explore
the theoretical upper bound of such a strategy, we retrospectively identify,
for each VP, the protocol yielding the longest survival time.

For each VP $i$, we define
\begin{equation}
P_i^\ast=\arg\max_{P} T_{s,i}^{P},
\end{equation}
and the corresponding optimal survival time as
\begin{equation}
T_{s,i}^\ast=\max_{P} T_{s,i}^{P}.
\end{equation}

\begin{table*}[t!]
    \caption{Distribution of patient-specific optimal protocol selection across the virtual population ($N=10000$). The \textit{Pop. Median OS} column reports the median OS obtained when each protocol is applied uniformly to the entire cohort, whereas the \textit{Subpop. Median OS} column reports the median survival within the subset of VPs for whom that protocol is individually optimal. Furthermore, the observed median and 95\% CI for $T_s^\ast$ is $14.81$ months ($95$\% CI: $14.51-15.16$ months).}
    \label{tab:protocol_selection}
    \centering
    \small
    \begin{ruledtabular}
        \begin{tabular}{lccc}
            \textbf{Protocol ($P$)} & \textbf{Optimal for VPs (\%)} & \textbf{Pop. Median OS (months)} & \textbf{Subpop. Median OS ($T_s^\ast$,months)} \\
            \hline
            Stupp                          & 0.00          & 13.88          & -- \\
            3C--Stupp                      & 22.79          & 14.60          & 17.06 \\
            Stupp--3C                      & 15.17          & 14.74          & 30.56 \\
            3C--Stupp--3C                  & \textbf{30.95} & \textbf{14.78} & 18.02 \\
            Stupp(C)--3C--Stupp(A)         & 23.04          & 14.74          & 9.15 \\
            Stupp(C)--3C--Stupp(A)--3C     & 8.05          & 14.74          & 9.71 \\
        \end{tabular}
    \end{ruledtabular}
\end{table*}


\begin{table*}[t!]
    \caption{Median parameter values across the global virtual population and within the optimal patient subpopulations for each clinical protocol.}
    \label{tab:biological_profiles_subpopulations}
    \centering
    \small
    \begin{ruledtabular}
        \begin{tabular}{lcccccccc}
            \textbf{Optimal Subpopulation} & \textbf{$r_1$} & \textbf{$\tau$} & \textbf{Ki-67} & \textbf{$\delta_1$} & \textbf{$\delta_2$} & \textbf{$\rho_1$} & \textbf{$\rho_2,\rho_3$} & \textbf{$\rho_4$} \\
            \hline
            \textbf{Global Population ($N=10000$)} & 0.12 & 0.09 & 0.30 & 0.20 & 0.05 & 0.09 & 4.57 & 0.11 \\
            \hline
            \textbf{3C--Stupp                   } & 0.10 & 0.07 & 0.33 & 0.20 & 0.05 & 0.09 & 6.12 & 0.07 \\
            \textbf{Stupp--3C                   } & 0.06 & 0.05 & 0.28 & 0.20 & 0.05 & 0.07 & 3.40 & 0.12 \\
            \textbf{3C--Stupp--3C               } & 0.12 & 0.09 & 0.26 & 0.20 & 0.05 & 0.10 & 5.61 & 0.09 \\
            \textbf{Stupp(C)--3C--Stupp(A)      } & 0.18 & 0.16 & 0.32 & 0.20 & 0.05 & 0.08 & 2.55 & 0.14 \\
            \textbf{Stupp(C)--3C--Stupp(A)--3C  } & 0.16 & 0.12 & 0.32 & 0.19 & 0.05 & 0.09 & 4.48 & 0.12 \\
        \end{tabular}
    \end{ruledtabular}
\end{table*}

Although different protocols were individually optimal for substantial subsets of the virtual population, retrospective patient specific protocol selection produced only a marginal improvement in population level median
OS, from $14.78$ months for the best fixed strategy to $14.81$ months ($95$\% CI: $14.51-15.16$ months). This suggests that VPs who benefit from the combination of CAR-T and Stupp protocols are a subset of the virtual cohort which responds well with each combinations investigated here, in line with the correlations presented in Table~\ref{tab:spearman_benefit}, where the correlation is independent from the combined protocols. To see a higher benefit for all VPs, the combination should be further personalized for each VP.

\section{Discussion}

In this work, we have developed a unified mathematical framework to investigate the interaction between RT, TMZ, and CAR-T cell therapy in MGs. The model integrates and extends our previous formulations for TMZ--CAR-T therapy and RT--TMZ treatment \cite{Sinelshchikov2025,PeralesPaton2026}, simultaneously accounting for treatment-specific resistance, proliferative and quiescent tumor states, treatment-induced damage, CAR-T cell dynamics, and the detrimental effects that conventional therapies may exert on therapeutic T cells. To our knowledge, these mechanisms have not previously been incorporated within a single mathematical framework for the combined study of RT, TMZ, and CAR-T therapy in MGs, and in mathematical oncology in general.

A central feature of our model is that the three modalities target partially overlapping tumor populations while dynamically interfering with one another. TMZ and CAR-T cells affect complementary, treatment-sensitive proliferative phenotypes, whereas RT triggers accelerated repopulation in quiescent cells and acts more broadly across proliferative states. 
Concurrently, RT and TMZ deplete CAR-T cell counts, while the latter drives chemoresistance. 
Consequently, therapeutic outcomes emerge from highly non-linear, complex dynamics governed not just by individual treatment efficacy, but also by clonal heterogeneity and temporal scheduling. This significantly advances our previous TMZ--CAR-T model, where sequencing substantially shifted outcomes~\cite{Sinelshchikov2025}, by embedding the proliferative-quiescent structure and treatment-induced damage of the RT--TMZ framework~\cite{PeralesPaton2026}. The resulting model offers a setting to explore treatment resistance, cellular heterogeneity, and multimodal scheduling simultaneously.

The mathematical analysis provides a complementary interpretation of the treatment mechanisms. In the absence of sustained therapy, the tumor-free equilibrium is unstable, reflecting the ability of a residual viable tumor population to repopulate the system. By contrast, the continuous-treatment formulation identifies parameter regimes in which a tumor-free state becomes locally asymptotically stable. Similar threshold-type behavior was obtained in our previous TMZ--CAR-T model \cite{Sinelshchikov2025} and in other mathematical formulations of CAR-T therapy for glioblastoma \cite{Bodnar2023amcs}, emphasizing that durable tumor control depends on the balance between tumor proliferation and therapeutic pressure.

In the present model, the stability conditions
$\bar{E}_0>\mu(r_1-\alpha_4)/(\alpha_1+\epsilon_1)$
and
$\bar{C}_0>\left(\rho_1+\alpha_6+\alpha_3\bar{E}_0/\mu\right)(r_2-\alpha_5)/\alpha_2$
also reveal the competing effects generated by multimodal treatment. RT-mediated tumor killing, represented through $\alpha_4$ and $\alpha_5$, reduces the TMZ and CAR-T pressure required for tumor control. Conversely, RT-induced CAR-T loss through $\alpha_6$ and TMZ-induced CAR-T depletion through $\alpha_3\bar{E}_0/\mu$ increase the CAR-T input required for stabilization. Thus, the analytical results already capture the competition between complementary antitumor effects and treatment-induced interference that motivates the subsequent numerical study of treatment sequencing. These thresholds should be regarded as theoretical controls rather than clinical dosing rules, since the clinically relevant protocols consist of finite impulsive administrations rather than sustained treatment.

The explicit distinction between proliferative and quiescent tumor cells provides an additional mechanism for treatment persistence and recurrence. Quiescent MG populations have been associated with protective microenvironmental niches, drug-tolerant states, and subsequent tumor regrowth \cite{Tejero2019,Liau2017,Xie2022}. In the present model, quiescent cells therefore act as a protected reservoir that must return to the proliferative state before becoming susceptible to the modeled therapeutic effects. This should be interpreted as a phenomenological simplification rather than as implying complete biological resistance of every quiescent tumor cell, but it captures the general observation that cellular state can substantially modify treatment vulnerability. Crucially, this compartmental structure decouples microscopic cell division $r$ from macroscopic tumor growth, which scales as $r_{\mathrm{eff}} \simeq \text{Ki-67} \cdot r$ during early phases. This theoretical distinction aligns with the vast clinical heterogeneity of untreated glioblastomas, where Stensj{\o}en \textit{et al.} documented a median volume-doubling time of $50$ days alongside pronounced, size-dependent 
growth deceleration~\cite{Stenjoen}. Importantly, the proliferative fraction is a dynamical variable. While initialized by the $\text{Ki-67}$ index ($\Delta$), the system predicts that as the tumor approaches carrying capacity, $U \to \beta_2 / (\beta_1 + \beta_2)$. Therapy disrupts this state through selective clonal depletion, though the natural quiescent-to-proliferative flux tends to restabilize the $\text{Ki-67}$ balance post-treatment. Thus, $\text{Ki-67}$ operates as a baseline patient marker rather than an invariant feature, capturing how growth, therapy, and internal cellular fluxes progressively reshape the proliferative architecture of the residual disease.

To validate the model, we systematically simulated all core clinical scenarios: 
postoperative untreated disease \cite{Feucht}, radiotherapy (RT) monotherapy ~\cite{Stupp2005}, temozolomide (TMZ) monotherapy \cite{Wick2012,Glantz2003}, 
their combination (the standard Stupp protocol)~\cite{Stupp2005}, and CAR-T monotherapy~\cite{Brown2024}. 
The framework closely replicates established clinical survival timescales and incremental drug benefits; 
for instance, matched simulations for the Stupp trial yield median OS times of $12.21$ months for RT 
and $14.60$ months for RT+TMZ, mirroring the clinically reported $12.1$ and $14.6$ months. 
While early-phase glioblastoma CAR-T trials exhibit significant manufacturing and logistical 
heterogeneity~\cite{Bagley2025,Choi2024,Monje2025}, our model consistently aligns with the survival 
clocks of recurrent disease benchmarks~\cite{Brown2024}. Collectively, this broad alignment confirms the biological realism of our parameterized system and underscores the robustness 
of the subsequent comparative analysis.

At the population level, adding CAR-T therapy to the Stupp protocol produced
a statistically significant but modest survival benefit across all five
treatment sequences. Median OS increased by approximately $0.8$--$0.9$ months
relative to Stupp alone, whereas the differences among the five CAR-T--Stupp
sequences were relativelly small ($<0.1$ month). Thus, within the parameter
ranges and treatment doses considered here, the principal population-level
effect arises from adding CAR-T therapy itself rather than from a uniquely
favourable temporal ordering.

From a biological perspective, this relative robustness to treatment order
may reflect the existence of competing interactions between conventional
therapy and CAR-T cells. TMZ-induced lymphodepletion has been reported to
enhance CAR-T expansion and antitumor activity in glioblastoma
\cite{Suryadevara2018}, while preclinical studies have also shown synergistic
interactions between radiotherapy and CAR-T therapy \cite{Weiss2018}.
Conversely, both cytotoxic chemotherapy and irradiation can impair therapeutic
T-cell persistence. The resulting balance between cooperation and treatment
interference may therefore reduce the existence of a single strongly preferred
sequence. This result contrasts with the stronger sequencing effects observed
in our previous TMZ--CAR-T framework \cite{Sinelshchikov2025}. The additional
proliferative--quiescent structure and treatment-induced damage inherited from
our RT--TMZ framework \cite{PeralesPaton2026}, together with the broader
multi-modal treatment context considered here, may therefore buffer the effect
of fine schedule rearrangements.

Our scheduling sensitivity analyses reinforce these insights. At a fixed cumulative dose, infusion fractionation minimally shifts median survival, whereas scaling up the total cell count offers a predictable benefit. Crucially, temporal spacing effects are sequence-dependent: widening the gap pre-Stupp provides a minor survival advantage by extending early T-cell activity, whereas delaying immunotherapy post-Stupp steadily erodes outcomes. Thus, preventing excessive post-treatment gaps appears far more vital than meticulously optimizing fractionation schedules. This conclusion accords with clinical evidence that solid-tumor CAR-T performance depends on a multifactorial interplay of cell persistence, trafficking, tumor burden, and the overall therapeutic environment~\cite{Li2025}.

Population averages, however, conceal substantial heterogeneity in individual benefit. 
While the median population-level gain remains under one month, approximately $19\%$--$22\%$ 
of VPs extend survival by more than $60$ days, and about $3\%$ gain over a year under 
the combined arms. Similarly, relative survival increases exceed $5\%$ in $36\%$--$55\%$ 
of VPs, whereas gains above $20\%$ remain rare. Thus, a modest shift in population-level 
median OS does not imply a uniformly negligible effect across individual virtual patients.

The intrinsic proliferation rate $r_1$ emerges as the primary determinant of additive CAR-T benefit, with slower-growing tumors consistently yielding greater therapeutic returns. T-cell kinetics, microenvironmental inactivation, and initial clonal resistance further shape this efficacy, mirroring well-documented clinical barriers such as poor persistence, immunosuppression, and antigen heterogeneity~\cite{Li2025}. Specifically, clinical observations of antigen escape following EGFRvIII-targeted CAR-T trials support our focus on pre-existing and therapy-selected resistant fractions~\cite{ORourke2017}. Crucially, these associations represent model-derived predictive hypotheses rather than established clinical biomarkers, and their validation will require independent empirical datasets.

The protocol-selection analysis reinforces the contrast between individual heterogeneity and population baseline shifts. Although different sequences are individually optimal for distinct subsets of the cohort, retrospective, patient-specific matching raises the median OS by a mere $0.04$ months (from $14.78$ to $14.82$). Consequently, schedule optimization yields minimal returns when restricted to these candidate configurations. This finding underscores that the subpopulation medians in Table\ref{tab:protocol_selection} reflect selected patient phenotypes rather than a direct head-to-head measure of protocol efficacy.

This distinction is vital for model-based virtual clinical trials, where population-level rankings and individually optimal strategies often diverge, and conclusions remain highly dependent on cohort generation and constraints~\cite{Craig2023VCT,Gevertz2024VCT}. Substantial personalization gains may therefore require adapting not only sequence order, but also doses, fractionation, and intervals. Ultimately, shifting from retrospective stratification to predictive personalization will require integrating patient-specific longitudinal data, as envisioned in mechanistic digital-twin frameworks in oncology~\cite{Wang2024,Wu2022DigitalTwin}.

Several limitations temper our findings. First, the model is non-spatial,  capturing restricted CAR-T cell infiltration and niche-mediated quiescent protection  phenomenologically rather than through explicit tumor geometries, vascular architectures,  or immune trafficking. Second, treating quiescent cells as completely refractory  to cytotoxicity is an idealized representation of state-dependent treatment susceptibility  designed to mirror dormancy-driven persistence~\cite{dormantstateandback, baldominos2022quiescent}.  Third, the resistance architecture remains parsimonious, omitting intermediate drug-tolerant  persister states or cross-resistant clones that may emerge during repeated multimodal therapy.  Fourth, the virtual cohort lacks molecular markers like MGMT promoter methylation, and is  generated via bounded uniform parameter ranges rather than calibrated multivariate patient  distributions, meaning quantitative boundaries must be interpreted within this virtual clinical  trial setup~\cite{Craig2023VCT, Gevertz2024VCT}. Finally, our patient-specific sequencing  optimization is retrospective, leveraging full simulated outcomes to define a theoretical  upper bound of efficacy. Moving toward prospective treatment adaptation will require  identifying measurable pretreatment markers capable of anticipating these response profiles.

Taken together, our findings suggest that the primary value of incorporating CAR-T  therapy lies not in finding a universally optimal schedule, but in exploiting  complementary mechanisms within biologically selected patient cohorts. The minimal  differences among population-level timelines coexist with substantial individual  heterogeneity, underscoring that population- and individual-level optimization are  fundamentally distinct problems. This framework shifts the paradigm from searching for  a single population-wide regimen toward identifying the specific tumor characteristics  that dictate if, when, and at what dose immunotherapy provides a meaningful advantage  over standard care. Ultimately, combining mechanistic modeling, clinical benchmarking,  and virtual trials establishes a robust pipeline to generate testable hypotheses  and advance model-informed multimodal personalization.

\section{Conclusions} 

We have developed a unified mathematical framework to investigate the combined action of  radiotherapy, temozolomide, and CAR-T cell therapy in malignant gliomas. Modeled as a single  impulsive dynamical system, the framework integrates treatment-specific resistance,  proliferative-quiescent transitions, tissue damage, and cellular interference. Analytical stability conditions successfully formalize the trade-off between synergistic cytolysis and treatment-induced T-cell antagonism under continuous treatment. Rigorous clinical benchmarking  across all core regimens---including untreated, monotherapeutic, and combined Stupp protocol---validates the temporal realism of our parameterized virtual population.  

At the population level, incorporating CAR-T therapy into standard chemoradiotherapy yields  a consistent but modest survival extension, shifting median OS by $0.8$--$0.9$ months with negligible divergence among the five candidate sequences. Sensitivity analyses confirm that median outcomes are robust to scheduling variations, indicating that the baseline therapeutic gain stems from the inclusion of the immunotherapeutic axis rather than meticulous timeline  optimization. Crucially, these tight population medians mask profound individual heterogeneity:  $19\%$--$22\%$ of virtual patients extend survival by more than $60$ days, and $\sim 3\%$  exceed one year.   

Our parameter analysis identifies the intrinsic tumor proliferation rate $r_1$ as the primary candidate biomarker of response, with slower-growing tumors deriving the largest incremental gains. While distinct treatment sequences are individually optimal for specific  patient subsets, retrospective personalized scheduling raises the overall population median  by a mere $0.04$ months. This suggests that the promise of personalization lies not in shifting  broad epidemiological medians, but in identifying responsive phenotypes. Ultimately, this  work shifts the paradigm toward mapping patient-specific vulnerability windows.

\section*{Acknowledgments}
\noindent
Dmitry Sinelshchikov is supported by PID2025-170691NB-I00 funded by Ministerio de Ciencia e Innovación/Agencia Estatal de Investigación. Spain (doi:10.13039/501100011033) and European Regional Development Fund (ERDF A way of making Europe), H2020-MSCACOFUND-2020-101034228-
WOLFRAM2 and PIBA-2024-1-0016.\\
Nikols Amaru Mora Mill\'an  is supported by PIBA-2024-1-0016.\\
Juan Belmonte-Beitia was partially supported by project
PID2024-155384OB-C21, funded by Ministerio de Ciencia e 
Innovación/Agencia Estatal de Investigación, 
Spain (doi:10.13039\-/501100011033) and European Regional 
Development Fund (ERDF A way of making Europe). \\
This research is part of the research project 
SBPLY/23/180225/000041, funded by the European Union through 
the European Regional Development Fund (ERDF) and 
by the Regional Government of Castilla-La Mancha (JCCM) 
through the INNOCAM programme.\\
Matteo Italia was partially supported by CONVOCATORIA INTRAMURAL – IDISCAM PhD, funded by Instituto de Investigación Sanitaria de Castilla la Mancha (IDISCAM).\\
This work was partially supported by University of Castilla-La Mancha / ERDF, A way of making Europe (Applied Research Projects) under grant  2025-GRIN-38309.\\
This work was partially supported by project PID2025-174928OB-I00, funded by Ministerio de Ciencia e Innovación/Agencia Estatal de Investigación. Spain (doi:10.13039/501100011033) and European Regional Development Fund (ERDF A way of making Europe).

\section*{Data Availability}

Data sharing is not applicable to this article as no new data were created or analyzed in this study.

\section*{Code Availability}

The simulation code is available at https://github.com/disinel/Combining-standard-chemoradiotherapy-with-CAR-T-cell-therapy-in-malignant-gliomas.

\appendix

\section{Continuous-treatment formulation for tumor-free stability analysis}
\label{sec:app_1}

The treatment protocols considered in the numerical simulations are modeled through impulsive interventions, as described in Section \ref{sec:modeling_treatments}. For the analytical study of tumor eradication, however, it is useful to introduce an auxiliary continuous-treatment formulation in which the effects of RT, TMZ, and CAR-T therapy are represented by constant effective treatment rates. This formulation is not intended to reproduce a clinically implementable treatment schedule; rather, it provides a mathematically tractable limiting problem that allows us to determine conditions under which sustained therapeutic pressure can stabilize a tumor-free state.

Specifically, $\bar{C}_0$ and $\bar{E}_0$ represent constant input rates of CAR-T cells and TMZ, respectively. The parameters $\alpha_4$ and $\alpha_5$ describe effective RT-induced killing of the corresponding proliferative tumor populations, $\gamma_1$ represents continuous RT-induced recruitment from quiescence into proliferation, and $\alpha_6$ accounts for the effective loss of CAR-T cells due to irradiation. Under these assumptions, system \eqref{eq:new_model4} is replaced by

\begin{eqnarray}
\label{eq:new_model_const_treatment}
\dot{S}&=&r_1 S \left(1-\frac{S+Q+R_{C}+Q_{C}+R_{E}+Q_{E}+D}{K}\right)-\beta_{1}S+\beta_{2}Q-\alpha_{4}S + \gamma_{1} Q- (\alpha_1+\epsilon_{1}) E S -\alpha_2 CS, \label{eqS_const} \nonumber \\
\dot{Q}&=&\beta_{1} S - \beta_{2} Q-\gamma_{1} Q ,  \label{eqSQ_const} \nonumber \\
\dot{R}_{C}&=&r_1 R_C \left(1-\frac{S+Q+R_{C}+Q_{C}+R_{E}+Q_{E}+D}{K}\right) -\beta_{1}R_{C}+\beta_{2}Q_{C}-\alpha_{4}R_{C} + \gamma_{1} Q_{C}-(\alpha_1+\epsilon_{1})E R_C, \label{eqRC_const} \nonumber \\
\dot{Q}_{C}&=&\beta_{1} R_C - \beta_{2} Q_{C} -\gamma_{1} Q_{C},  \label{eqRCQ_const} \\
\dot{R}_{E}&=&r_2R_E  \left(1-\frac{S+Q+R_{C}+Q_{C}+R_{E}+Q_{E}+D}{K}\right) -\beta_{1}R_{E}+\beta_{2}Q_{E}-\alpha_{5}R_{E} + \gamma_{1} Q_{E}-\alpha_2CR_E+\epsilon_1 (S+R_C)E, \label{eqRE_const} \nonumber \\
\dot{Q}_{E}&=&\beta_{1} R_E - \beta_{2} Q_{E} -\gamma_{1} Q_{E},  \label{eqREQ_const} \nonumber \\
\dot{D}&=&\alpha_{1}(S+R_{C})E+\alpha_{2}(S+R_{E})C-\tau D+\alpha_{4}(S+R_{C})+\alpha_{5}R_{E}, \label{eqD_const} \nonumber \\
\dot{C}&=&C_{0}-\rho_1C+\frac{\rho_2SC}{g_1+S} +\frac{\rho_3R_EC}{g_2+R_E}\rho_{4}\frac{(S+Q+R_{C}+Q_C+R_{E}+Q_E+D)C}{g_{3}+C}-\alpha_3EC -\alpha_6C \label{eqC_const}, \nonumber\\
\dot{E}&=&\bar{E}_0-\mu E \label{eqE_const} \nonumber
\end{eqnarray}

The continuous treatment system preserves the biological structure of the impulsive model while replacing the discrete treatment events by effective sustained actions. In particular, the tumor-growth, proliferative-quiescent switching, TMZ-resistance, CAR-T expansion, and tumor-mediated CAR-T inactivation mechanisms remain unchanged. The additional terms $\alpha_4$, $\alpha_5$, $\gamma_1$, and $\alpha_6$ provide a continuous representation of the RT effects that are modelled through instantaneous state changes in the clinically motivated formulation.

This auxiliary system is used in the mathematical analysis of the main text to characterize the tumor-free equilibrium and to derive explicit conditions relating tumor proliferation and therapeutic pressure under which this equilibrium becomes locally asymptotically stable. The resulting thresholds should therefore be interpreted as theoretical benchmarks for tumor eradication rather than as clinical dosing recommendations.

\section{Proliferative-fraction dynamics and effective tumor growth rates}\label{sec:app_2}

The introduction of quiescent compartments distinguishes the intrinsic proliferation rate of actively dividing tumor cells from the effective growth rate of the corresponding total tumor population. In this appendix, we characterize the dynamics of the proliferative fraction, describe its relation with the $\mbox{Ki-67}$ index, and establish the connection between the proliferation rates of the present model and the effective growth rates used in our previous reduced formulation \cite{Sinelshchikov2025}.

For clarity, consider first a generic proliferative--quiescent pair, denoted by $X_P$ and $X_Q$, with intrinsic proliferation rate $r$. The variable $X_{P}$ represents one of the proliferative tumor components $S$, $R_{C}$ or $R_{E}$ and the variable $X_{Q}$ represents the corresponding quiescent compartment $Q$, $Q_{C}$ or $Q_{E}$,

In the absence of treatment, its dynamics can be written as
\begin{equation}
\begin{aligned}
\dot X_P &=
rX_P\left(1-\frac{T}{K}\right)
-\beta_1X_P+\beta_2X_Q,\\
\dot X_Q &=\beta_1X_P-\beta_2X_Q,
\end{aligned}
\label{eq:appendix_PQ}
\end{equation}
where $T$ denotes the total tumor burden entering the logistic growth term, as defined in the main model \eqref{eq:new_model4}.

Let
\begin{equation}
X=X_P+X_Q,
\qquad
U=\frac{X_P}{X},
\label{eq:appendix_U}
\end{equation}
where $U$ represents the proliferative fraction. Summing the two equations in \eqref{eq:appendix_PQ} gives
\begin{equation}
\dot X=
rX_P\left(1-\frac{T}{K}\right)
=
rU X\left(1-\frac{T}{K}\right).
\label{eq:appendix_total_growth}
\end{equation}
Using \eqref{eq:appendix_U}, the evolution of the proliferative fraction is therefore governed by
\begin{equation}
\dot U
=
\beta_2+
\left[
r\left(1-\frac{T}{K}\right)-\beta_1-\beta_2
\right]U
-r\left(1-\frac{T}{K}\right)U^2.
\label{eq:appendix_U_general}
\end{equation}
Thus, even in the absence of treatment, the proliferative fraction is in general a dynamical quantity coupled to the tumor burden.

\subsection*{Early-growth regime and \texorpdfstring{\mbox{Ki-67}} calibration}

During the early growth phase, $T/K\ll1$, and the logistic factor is approximately equal to one. Equation~\eqref{eq:appendix_U_general} then reduces to
\begin{equation}
\dot U
=
\beta_2+(r-\beta_1-\beta_2)U-rU^2.
\label{eq:appendix_U_exponential}
\end{equation}

Let $\Delta\in(0,1)$ denote the reference proliferative fraction identified with the $\mbox{Ki-67}$ index. The switching parameters are calibrated using the tumor populations with intrinsic proliferation rate $r_1$, namely the $S/Q$ and $R_C/Q_C$ pairs. Requiring $U=\Delta$ to be an equilibrium of their early-growth proliferative-fraction dynamics gives
\begin{equation}
\beta_2=\Delta\left(
\frac{\beta_1}{1-\Delta}-r_1
\right).
\label{eq:appendix_beta2}
\end{equation}
Consequently, the condition $\beta_2\geq0$ requires
\begin{equation}
\beta_1\geq r_1(1-\Delta).
\label{eq:appendix_admissibility}
\end{equation}

Moreover, the derivative of the right-hand side of
\eqref{eq:appendix_U_exponential}, evaluated at $U=\Delta$ and $r=r_1$, is
\begin{equation}
\frac{(1-\Delta)^2r_1-\beta_1}{1-\Delta},
\end{equation}
which is negative under condition~\eqref{eq:appendix_admissibility}. Hence, $\Delta$ is a locally asymptotically stable equilibrium of the early-growth proliferative-fraction dynamics for the $S/Q$ and $R_C/Q_C$ populations. In particular, if these populations are initialized with $U(0)=\Delta$, then $U(t)=\Delta$ within the exponential-growth approximation.

The same switching rates $\beta_1$ and $\beta_2$ are used for the TMZ-resistant pair $R_E/Q_E$, whose intrinsic proliferation rate $r_2$
is retained as a distinct parameter from $r_1$ in the general mathematical
formulation. This general formulation allows the proliferative-fraction dynamics to be characterized for arbitrary relations between $r_1$ and $r_2$, although the reference virtual cohort used in the numerical simulations assumes $r_2=r_1/2$. Consequently, unless $r_2=r_1$,
$U=\Delta$ is not an equilibrium for this population. Indeed, in the early-growth regime,
\begin{equation}
\left.
\dot U_E
\right|_{U_E=\Delta}
=
\Delta(1-\Delta)(r_2-r_1),
\label{eq:appendix_UE_initial}
\end{equation}
where $U_E=R_E/(R_E+Q_E)$. Hence, if $r_2>r_1$, the proliferative fraction of the TMZ-resistant population initially increases relative to $\Delta$, whereas it decreases if $r_2<r_1$. When $r_2=r_1$, the same equilibrium proliferative fraction is recovered. Thus, although all tumor phenotypes can be initialized using the same reference $\mbox{Ki-67}$ value, their subsequent proliferative fractions may differ because of phenotype-dependent proliferation rates.

\subsection*{Effect of logistic growth}

Even for the populations growing with rate $r_1$, the value $U=\Delta$ does not remain an exact equilibrium once logistic inhibition becomes relevant. Evaluating \eqref{eq:appendix_U_general} at $U=\Delta$, with $r=r_1$ and using \eqref{eq:appendix_beta2}, yields
\begin{equation}
\left.
\dot U
\right|_{U=\Delta}
=
-r_1\frac{T}{K}\Delta(1-\Delta).
\label{eq:appendix_delta_logistic}
\end{equation}
Therefore, increasing tumor burden tends to decrease the proliferative fraction relative to its early-growth value.

For the TMZ-resistant population one obtains
\begin{equation}
\left.
\dot U_E
\right|_{U_E=\Delta}=
\Delta(1-\Delta)
\left[
r_2\left(1-\frac{T}{K}\right)-r_1
\right].
\label{eq:appendix_UE_logistic}
\end{equation}
Thus, the evolution of its proliferative fraction reflects both its distinct intrinsic proliferation rate and the progressive effect of logistic inhibition.

In the limiting regime $T\rightarrow K$, Eq.~\eqref{eq:appendix_U_general} becomes
\begin{equation}
\dot U
\longrightarrow
\beta_2-(\beta_1+\beta_2)U,
\end{equation}
whose stationary value is
\begin{equation}
U_{\infty}
=
\frac{\beta_2}{\beta_1+\beta_2}.
\label{eq:appendix_U_asymptotic}
\end{equation}
Hence, the Ki-67-based value $\Delta$ should be interpreted as a reference proliferative fraction characterizing the initial tumor state rather than as a quantity that necessarily remains constant throughout tumor evolution. Treatment can further perturb this balance by selectively modifying the different tumor compartments.

\subsection*{Relation with the reduced growth model}

Equation~\eqref{eq:appendix_total_growth} provides the connection between the intrinsic proliferation rates of the present model and the effective macroscopic growth rates employed in our previous TMZ--CAR-T formulation \cite{Sinelshchikov2025}. For each proliferative--quiescent phenotype,
\begin{equation}
\dot X=r_{\mathrm{eff}}(t)X\left(1-\frac{T}{K}\right),
\qquad
r_{\mathrm{eff}}(t)=rU(t).
\label{eq:appendix_reff}
\end{equation}
Thus, $r$ represents the intrinsic proliferation rate of actively dividing cells, whereas $r_{\mathrm{eff}}(t)$ represents the effective growth rate of the corresponding total population.

During the early growth phase, for populations whose proliferative fraction remains close to its reference value, $U\simeq\Delta$, we obtain
\begin{equation}
r_{\mathrm{eff}}
\simeq
\Delta r.
\label{eq:appendix_rate_relation}
\end{equation}
Accordingly, the growth parameters of the previous reduced formulation can be interpreted, to leading order during early tumor growth, as
\begin{equation}
r_{\mathrm{original}}
\simeq
\mathrm{Ki\mbox{-}67}\,
r_{\mathrm{extended}}.
\label{eq:appendix_original_extended}
\end{equation}

This relation is exact whenever $U=\Delta$ and otherwise provides an early-growth approximation. In the full model, the exact relation is $r_{\mathrm{eff}}(t)=rU(t)$, so that the effective macroscopic growth rate becomes time-dependent as the proliferative fraction evolves. In particular, differences between $r_1$ and $r_2$, logistic inhibition, and treatment-induced changes in tumor composition can cause the different phenotypes to exhibit distinct effective growth dynamics even when they are initialized with the same Ki-67-based proliferative fraction.

\section{Additional sensitivity analyses}
\label{app:additional_sensitivity}

\subsection{Sensitivity to the number of CAR-T infusions and total dose}
\label{app:number_cart_infusions}

To complement the summarized analysis presented in the main text, we report
here the complete results obtained by varying the number of CAR-T
administrations from one to ten while keeping the total administered CAR-T
dose fixed. CAR-T therapy is considered either before or after the complete
Stupp protocol, for total doses of $10^9$ and $2\times10^9$ cells (see Table \ref{tab:survival_times_SUPP+CAR-T_combined}).

\begin{table}[ht!]
    \caption{Median overall survival and 95\% confidence intervals obtained by varying the number of CAR-T infusions from one to ten, with CAR-T therapy administered either before or after the Stupp protocol. Results are shown for total CAR-T doses of $10^9$ and $2\times10^9$ cells.}
    \label{tab:survival_times_SUPP+CAR-T_combined}
    \centering
    \begin{ruledtabular}
        \begin{tabular}{lcc}
            \textbf{Protocol} & \textbf{$\tilde{T}_{1}$ [95\% CI]} & \textbf{$\tilde{T}_{2}$ [95\% CI] (months)} \\
            \hline
            Stupp -- 1C & 14.74 [14.41, 15.07] & 14.86 [14.48, 15.21] \\
            1C -- Stupp & 14.49 [14.22, 14.86] & 14.59 [14.32, 14.95] \\
            Stupp -- 2C & 14.74 [14.40, 15.07] & 14.86 [14.48, 15.21] \\
            2C -- Stupp & 14.53 [14.26, 14.90] & 14.67 [14.37, 15.02] \\
            Stupp -- 3C & 14.74 [14.40, 15.07] & 14.86 [14.49, 15.23] \\
            3C -- Stupp & 14.57 [14.29, 14.94] & 14.73 [14.43, 15.09] \\
            Stupp -- 4C & 14.74 [14.40, 15.06] & 14.87 [14.49, 15.22] \\
            4C -- Stupp & 14.61 [14.33, 14.98] & 14.78 [14.46, 15.16] \\
            Stupp -- 5C & 14.74 [14.41, 15.06] & 14.86 [14.49, 15.24] \\
            5C -- Stupp & 14.63 [14.33, 14.99] & 14.82 [14.49, 15.20] \\
            Stupp -- 6C & 14.74 [14.39, 15.07] & 14.87 [14.49, 15.22] \\
            6C -- Stupp & 14.65 [14.35, 15.00] & 14.85 [14.50, 15.22] \\
            Stupp -- 7C & 14.73 [14.39, 15.07] & 14.87 [14.49, 15.22] \\
            7C -- Stupp & 14.66 [14.38, 15.01] & 14.87 [14.52, 15.23] \\
            Stupp -- 8C & 14.73 [14.39, 15.07] & 14.88 [14.50, 15.22] \\
            8C -- Stupp & 14.67 [14.39, 15.02] & 14.89 [14.55, 15.25] \\
            Stupp -- 9C & 14.72 [14.40, 15.05] & 14.87 [14.50, 15.22] \\
            9C -- Stupp & 14.68 [14.40, 15.04] & 14.91 [14.57, 15.27] \\
            Stupp -- 10C & 14.72 [14.41, 15.05] & 14.86 [14.49, 15.20] \\
            10C -- Stupp & 14.69 [14.41, 15.06] & 14.92 [14.59, 15.28] \\
        \end{tabular}
    \end{ruledtabular}
\end{table}

\subsection{Sensitivity to treatment spacing}
\label{app:treatment_spacing}

We next report the complete sensitivity analysis for the temporal spacing
between CAR-T therapy and the Stupp protocol, together with the interval
between consecutive CAR-T infusions. We consider the 3C-Stupp,
Stupp-3C, and Stupp(C)-3C-Stupp(A) sequences, thereby examining
CAR-T administration before, after, and between the two main phases of
standard chemoradiotherapy (see Tables \ref{tab:3C-Stupp-gaps1},  \ref{tab:Stupp-3C-gaps2} and \ref{tab:StuppC-3C-StuppA-gaps4}).
\begin{table}[ht!]
    \caption{Median overall survival (months) for the 3C--Stupp protocol as a function of the interval between the final CAR-T infusion and initiation of standard chemoradiotherapy ($\mathrm{Gap}_{SC}$), and the interval between consecutive CAR-T infusions ($\mathrm{Gap}_{C}$).}
    \label{tab:3C-Stupp-gaps1}
    \centering
    \begin{ruledtabular}
        \begin{tabular}{lccccccccc}
            & \multicolumn{9}{c}{\textbf{$\mathrm{Gap}_{C}$ (days)}} \\
            \cline{2-10}
            \textbf{$\mathrm{Gap}_{SC}$ (days)} & \textbf{1} & \textbf{2} & \textbf{3} & \textbf{4} & \textbf{5} & \textbf{6} & \textbf{7} & \textbf{10} & \textbf{14} \\
            \hline
            \multicolumn{10}{c}{\textbf{Total CAR-T Dose: $10^9$ cells}} \\
            \hline
            \textbf{0}   & 14.42 & 14.45 & 14.48 & 14.52 & 14.55 & 14.59 & 14.61 & 14.70 & 14.82 \\
            \textbf{7}   & 14.57 & 14.60 & 14.63 & 14.66 & 14.70 & 14.73 & 14.76 & 14.83 & 14.93 \\
            \textbf{14}  & 14.69 & 14.73 & 14.76 & 14.79 & 14.81 & 14.84 & 14.86 & 14.94 & 15.04 \\
            \textbf{21}  & 14.81 & 14.83 & 14.86 & 14.88 & 14.91 & 14.93 & 14.96 & 15.02 & 15.12 \\
            \textbf{28}  & 14.89 & 14.92 & 14.94 & 14.97 & 14.99 & 15.01 & 15.03 & 15.10 & 15.23 \\
            \hline
            \multicolumn{10}{c}{\textbf{Total CAR-T Dose: $2\times10^9$ cells}} \\
            \hline
            \textbf{0}   & 14.47 & 14.51 & 14.55 & 14.60 & 14.63 & 14.66 & 14.70 & 14.80 & 14.93 \\
            \textbf{7}   & 14.65 & 14.69 & 14.73 & 14.76 & 14.79 & 14.83 & 14.86 & 14.95 & 15.10 \\
            \textbf{14}  & 14.80 & 14.83 & 14.87 & 14.90 & 14.94 & 14.98 & 15.02 & 15.11 & 15.21 \\
            \textbf{21}  & 14.92 & 14.96 & 15.01 & 15.03 & 15.06 & 15.10 & 15.12 & 15.20 & 15.31 \\
            \textbf{28}  & 15.06 & 15.08 & 15.11 & 15.13 & 15.16 & 15.18 & 15.20 & 15.30 & 15.43 \\
        \end{tabular}
    \end{ruledtabular}
\end{table}

\begin{table}[ht!]
    \caption{Median overall survival (months) for the Stupp--3C protocol as a function of the interval between completion of Stupp and initiation of CAR-T therapy ($\mathrm{Gap}_{SC}$), and the interval between consecutive CAR-T infusions ($\mathrm{Gap}_{C}$).}
    \label{tab:Stupp-3C-gaps2}
    \centering
    \begin{ruledtabular}
        \begin{tabular}{lccccccccc}
            & \multicolumn{9}{c}{\textbf{$\mathrm{Gap}_{C}$ (days)}} \\
            \cline{2-10}
            \textbf{$\mathrm{Gap}_{SC}$ (days)} & \textbf{1} & \textbf{2} & \textbf{3} & \textbf{4} & \textbf{5} & \textbf{6} & \textbf{7} & \textbf{10} & \textbf{14} \\
            \hline
            \multicolumn{10}{c}{\textbf{Total CAR-T Dose: $10^9$ cells}} \\
            \hline
            \textbf{0}   & 14.73 & 14.73 & 14.73 & 14.73 & 14.74 & 14.74 & 14.74 & 14.74 & 14.74 \\
            \textbf{7}   & 14.73 & 14.73 & 14.74 & 14.74 & 14.74 & 14.74 & 14.74 & 14.75 & 14.75 \\
            \textbf{14}  & 14.74 & 14.74 & 14.74 & 14.74 & 14.74 & 14.74 & 14.74 & 14.75 & 14.75 \\
            \textbf{21}  & 14.74 & 14.74 & 14.74 & 14.74 & 14.74 & 14.75 & 14.75 & 14.75 & 14.75 \\
            \textbf{28}  & 14.74 & 14.74 & 14.74 & 14.74 & 14.74 & 14.74 & 14.74 & 14.74 & 14.74 \\
            \hline
            \multicolumn{10}{c}{\textbf{Total CAR-T Dose: $2\times10^9$ cells}} \\
            \hline
            \textbf{0}   & 14.88 & 14.88 & 14.89 & 14.90 & 14.90 & 14.91 & 14.91 & 14.92 & 14.93 \\
            \textbf{7}   & 14.88 & 14.89 & 14.90 & 14.90 & 14.90 & 14.90 & 14.91 & 14.92 & 14.93 \\
            \textbf{14}  & 14.87 & 14.88 & 14.87 & 14.88 & 14.89 & 14.90 & 14.90 & 14.91 & 14.91 \\
            \textbf{21}  & 14.87 & 14.87 & 14.87 & 14.88 & 14.88 & 14.88 & 14.89 & 14.90 & 14.90 \\
            \textbf{28}  & 14.86 & 14.86 & 14.86 & 14.86 & 14.87 & 14.87 & 14.87 & 14.88 & 14.88 \\
        \end{tabular}
    \end{ruledtabular}
\end{table}

\begin{table}[ht!]
    \caption{Median overall survival (months) for the Stupp(C)--3C--Stupp(A) protocol as a function of the intervals flanking the intermediate CAR-T block ($\mathrm{Gap}_{SC}$) and the interval between consecutive CAR-T infusions ($\mathrm{Gap}_{C}$).}
    \label{tab:StuppC-3C-StuppA-gaps4}
    \centering
    \begin{ruledtabular}
        \begin{tabular}{lccccccccc}
            & \multicolumn{9}{c}{\textbf{$\mathrm{Gap}_{C}$ (days)}} \\
            \cline{2-10}
            \textbf{$\mathrm{Gap}_{SC}$ (days)} & \textbf{1} & \textbf{2} & \textbf{3} & \textbf{4} & \textbf{5} & \textbf{6} & \textbf{7} & \textbf{10} & \textbf{14} \\
            \hline
            \multicolumn{10}{c}{\textbf{Total CAR-T Dose: $10^9$ cells}} \\
            \hline
            \textbf{0}   & 14.78 & 14.76 & 14.76 & 14.75 & 14.75 & 14.74 & 14.74 & 14.74 & 14.74 \\
            \textbf{7}   & 14.73 & 14.74 & 14.74 & 14.74 & 14.74 & 14.74 & 14.74 & 14.74 & 14.74 \\
            \textbf{14}  & 14.74 & 14.73 & 14.74 & 14.74 & 14.74 & 14.74 & 14.74 & 14.73 & 14.72 \\
            \textbf{21}  & 14.73 & 14.73 & 14.73 & 14.72 & 14.72 & 14.72 & 14.71 & 14.71 & 14.69 \\
            \textbf{28}  & 14.71 & 14.71 & 14.70 & 14.69 & 14.69 & 14.69 & 14.69 & 14.68 & 14.66 \\
            \hline
            \multicolumn{10}{c}{\textbf{Total CAR-T Dose: $2\times10^9$ cells}} \\
            \hline
            \textbf{0}   & 14.99 & 14.97 & 14.96 & 14.97 & 14.97 & 14.97 & 14.97 & 14.97 & 14.96 \\
            \textbf{7}   & 14.94 & 14.93 & 14.94 & 14.94 & 14.94 & 14.94 & 14.94 & 14.95 & 14.94 \\
            \textbf{14}  & 14.92 & 14.92 & 14.92 & 14.92 & 14.92 & 14.93 & 14.92 & 14.93 & 14.92 \\
            \textbf{21}  & 14.91 & 14.91 & 14.91 & 14.91 & 14.91 & 14.91 & 14.91 & 14.91 & 14.89 \\
            \textbf{28}  & 14.89 & 14.88 & 14.88 & 14.88 & 14.88 & 14.88 & 14.87 & 14.86 & 14.85 \\
        \end{tabular}
    \end{ruledtabular}
\end{table}

\subsection{Distribution of CAR-T therapy around the Stupp protocol}
\label{app:prepost_distribution}

For protocols in which CAR-T therapy is administered both before and after
standard chemoradiotherapy, we further investigate how the distribution of
the total CAR-T dose between the pre-Stupp and post-Stupp blocks affects
survival. We first consider the 3C--Stupp--3C strategy for total administered
CAR-T doses of $10^9$ and $2\times10^9$ cells (see Tables \ref{tab:3C-Stupp-3C-10^9-massive-sweep} and \ref{tab:survival_gaps_matrix}).

\begin{table*}[ht!]
    \caption{Median overall survival for the \textrm{3C--Stupp--3C} protocol as a function of the pre/post distribution of CAR-T therapy and treatment spacing, for a total CAR-T dose of $10^9$ cells. }
    \label{tab:3C-Stupp-3C-10^9-massive-sweep}
    \centering
    \begin{ruledtabular}
        \begin{tabular}{lcccccccc}
            & \multicolumn{8}{c}{\textbf{GapSC (days)}} \\
            \cline{2-9}
            \textbf{Split (Pre-Post)} & \textbf{7} & \textbf{14} & \textbf{21} & \textbf{30} & \textbf{45} & \textbf{60} & \textbf{75} & \textbf{90} \\
            \hline
            \textbf{30-70\%} & 14.79 & 14.78 & 14.78 & 14.75 & 14.70 & 14.59 & 14.56 & 14.55 \\
            \textbf{40-60\%} & 14.78 & 14.79 & 14.78 & 14.77 & 14.73 & 14.64 & 14.60 & 14.59 \\
            \textbf{50-50\%} & 14.78 & 14.79 & 14.79 & 14.78 & 14.74 & 14.67 & 14.64 & 14.63 \\
            \textbf{60-40\%} & 14.77 & 14.79 & 14.79 & 14.79 & 14.75 & 14.70 & 14.68 & 14.67 \\
            \textbf{70-30\%} & 14.75 & 14.78 & 14.79 & 14.79 & 14.77 & 14.73 & 14.70 & 14.70 \\
        \end{tabular}
    \end{ruledtabular}
\end{table*}

\begin{table*}[ht!]
    \caption{Median overall survival for the \textrm{3C--Stupp--3C} protocol as a function of the pre/post distribution of CAR-T therapy and treatment spacing, for a total CAR-T dose of $2\times10^9$ cells.}
    \label{tab:survival_gaps_matrix}
    \centering
    \begin{ruledtabular}
        \begin{tabular}{lcccccccc}
            & \multicolumn{8}{c}{\textbf{GapSC (days)}} \\
            \cline{2-9}
            \textbf{Split (Pre-Post)} & \textbf{7} & \textbf{14} & \textbf{21} & \textbf{30} & \textbf{45} & \textbf{60} & \textbf{75} & \textbf{90} \\
            \hline
            \textbf{30-70\%} & 14.93 & 14.93 & 14.94 & 14.92 & 14.86 & 14.77 & 14.70 & 14.67 \\
            \textbf{40-60\%} & 14.93 & 14.96 & 14.95 & 14.95 & 14.89 & 14.83 & 14.76 & 14.75 \\
            \textbf{50-50\%} & 14.94 & 14.96 & 14.97 & 14.96 & 14.91 & 14.87 & 14.82 & 14.80 \\
            \textbf{60-40\%} & 14.92 & 14.95 & 14.97 & 14.97 & 14.95 & 14.91 & 14.88 & 14.85 \\
            \textbf{70-30\%} & 14.90 & 14.95 & 14.98 & 14.99 & 14.97 & 14.95 & 14.92 & 14.90 \\
        \end{tabular}
    \end{ruledtabular}
\end{table*}


We perform an analogous analysis for the
Stupp(C)--3C--Stupp(A)--3C strategy, varying the distribution of CAR-T
therapy between the intermediate and post-Stupp blocks and the corresponding
treatment spacing. Results are reported for total CAR-T doses of $10^9$ and
$2\times10^9$ cells (see Table \ref{tab:StuppC-3C-StuppA-3C-10^9-massive-sweep2} and \ref{tab:StuppC-3C-StuppA-3C-2*10^9-massive-sweep}).

\begin{table*}[ht!]
    \caption{Median overall survival for the \textrm{Stupp(C)--3C--Stupp(A)--3C} protocol as a function of the pre/post distribution of CAR-T therapy and treatment spacing, for a total CAR-T dose of $10^9$ cells.}
    \label{tab:StuppC-3C-StuppA-3C-10^9-massive-sweep2}
    \centering
    \begin{ruledtabular}
        \begin{tabular}{lcccccccc}
            & \multicolumn{8}{c}{\textbf{GapSC (days)}} \\
            \cline{2-9}
            \textbf{Split (Pre-Post)} & \textbf{7} & \textbf{14} & \textbf{21} & \textbf{30} & \textbf{45} & \textbf{60} & \textbf{75} & \textbf{90} \\
            \hline
            \textbf{30-70\%} & 14.73 & 14.71 & 14.68 & 14.64 & 14.54 & 14.52 & 14.50 & 14.50 \\
            \textbf{40-60\%} & 14.74 & 14.72 & 14.69 & 14.65 & 14.59 & 14.56 & 14.53 & 14.53 \\
            \textbf{50-50\%} & 14.74 & 14.74 & 14.71 & 14.69 & 14.62 & 14.59 & 14.57 & 14.56 \\
            \textbf{60-40\%} & 14.74 & 14.74 & 14.73 & 14.70 & 14.65 & 14.63 & 14.60 & 14.59 \\
            \textbf{70-30\%} & 14.73 & 14.74 & 14.75 & 14.73 & 14.69 & 14.67 & 14.65 & 14.62 \\
        \end{tabular}
    \end{ruledtabular}
\end{table*}

\begin{table*}[ht!]
    \caption{Median overall survival for the \textrm{Stupp(C)--3C--Stupp(A)--3C} protocol as a function of the pre/post distribution of CAR-T therapy and treatment spacing, for a total CAR-T dose of $2\times10^9$ cells.}
    \label{tab:StuppC-3C-StuppA-3C-2*10^9-massive-sweep}
    \centering
    \begin{ruledtabular}
        \begin{tabular}{lcccccccc}
            & \multicolumn{8}{c}{\textbf{GapSC (days)}} \\
            \cline{2-9}
            \textbf{Split (Pre-Post)} & \textbf{7} & \textbf{14} & \textbf{21} & \textbf{30} & \textbf{45} & \textbf{60} & \textbf{75} & \textbf{90} \\
            \hline
            \textbf{30-70\%} & 14.91 & 14.90 & 14.86 & 14.81 & 14.69 & 14.64 & 14.61 & 14.59 \\
            \textbf{40-60\%} & 14.93 & 14.92 & 14.89 & 14.83 & 14.76 & 14.71 & 14.68 & 14.65 \\
            \textbf{50-50\%} & 14.94 & 14.92 & 14.91 & 14.86 & 14.79 & 14.74 & 14.71 & 14.70 \\
            \textbf{60-40\%} & 14.95 & 14.95 & 14.93 & 14.89 & 14.84 & 14.80 & 14.77 & 14.75 \\
            \textbf{70-30\%} & 14.95 & 14.95 & 14.93 & 14.92 & 14.88 & 14.84 & 14.81 & 14.79 \\
        \end{tabular}
    \end{ruledtabular}
\end{table*}

\subsection{Sensitivity to the tumor proliferation rate}
\label{app:r1_sensitivity}

Finally, we examine the dependence of survival outcomes on the tumor proliferation rate $r_1$. For each value of $r_1$, we preserve the reference-cohort relation $r_2=r_1/2$. This analysis is conceptually distinct from the
CAR-T scheduling sensitivity considered above and evaluates how changes in
tumor growth kinetics affect the outcomes of no treatment, CAR-T
monotherapy, and the 3C-Stupp-3C protocol (see Table \ref{tab:r1_protocol_comparison_all}).

\begin{table*}[t!]
    \caption{Median overall survival and 95\% confidence intervals for no treatment, CAR-T monotherapy (3C), and the 3C--Stupp--3C protocol as a function of the tumor proliferation rate $r_1$. }
    \label{tab:r1_protocol_comparison_all}
    \centering
    \begin{ruledtabular}
        \begin{tabular}{lccc}
            \textbf{$r_1$ (days$^{-1}$)} & \textbf{No treatment} & \textbf{3C} & \textbf{3C--Stupp--3C} \\
            \hline
            0.15 & 14.59 [14.27, 14.90] & 16.12 [15.73, 16.50] & 25.52 [24.91, 26.05] \\
            0.16 & 13.68 [13.39, 13.98] & 15.11 [14.77, 15.46] & 23.83 [23.27, 24.36] \\
            0.17 & 12.88 [12.61, 13.16] & 14.25 [13.91, 14.56] & 22.33 [21.82, 22.78] \\
            0.18 & 12.17 [11.91, 12.43] & 13.46 [13.13, 13.77] & 21.01 [20.57, 21.49] \\
            0.19 & 11.53 [11.29, 11.78] & 12.77 [12.46, 13.04] & 19.78 [19.42, 20.28] \\
            0.20 & 10.96 [10.74, 11.19] & 12.14 [11.84, 12.40] & 18.73 [18.40, 19.19] \\
            0.21 & 10.44 [10.24, 10.66] & 11.57 [11.29, 11.82] & 17.76 [17.43, 18.22] \\
            0.22 & 9.98 [9.78, 10.20] & 11.05 [10.78, 11.29] & 16.90 [16.56, 17.34] \\
            0.23 & 9.55 [9.35, 9.76] & 10.58 [10.32, 10.83] & 16.11 [15.80, 16.53] \\
            0.24 & 9.15 [8.97, 9.37] & 10.16 [9.90, 10.38] & 15.39 [15.07, 15.76] \\
            0.25 & 8.79 [8.62, 9.00] & 9.75 [9.50, 9.98] & 14.74 [14.42, 15.08] \\
        \end{tabular}
    \end{ruledtabular}
\end{table*}
>> 
\bibliography{bibliog.bib}

\end{document}